\DeclareSymbolFont{AMSb}{U}{msb}{m}{n}
\definecolor{britishracinggreen}{rgb}{0.0, 0.26, 0.15}
\definecolor{cobalt}{rgb}{0.0, 0.28, 0.67}
    \DeclareSymbolFont{usualmathcal}{OMS}{cmsy}{m}{n}
    \DeclareSymbolFontAlphabet{\mathcal}{usualmathcal}
\numberwithin{equation}{section}
\def\be{\begin{equation}}    
\def\ee{\end{equation}}
\def\bitem{\begin{itemize}}
\def\eitem{\end{itemize}}
\def\benum{\begin{enumerate}}
\def\eenum{\end{enumerate}}
\def\into{\hookrightarrow}
\def\onto{\twoheadrightarrow}
\def\op{\textrm{op}}
\def\isom{\cong}  
\def\ra{\rightarrow}
\def\surj{\twoheadrightarrow}
\def\Var{\mathsf{Var}}
\def\Sch{\mathsf{Sch}}
\def\Sets{\mathsf{Sets}}
\def\St{\mathsf{St}}
\def\L{\mathbb L}
\def\A{\mathbb A}
\def\R{\mathbb R}
\def\C{\mathbb C}
\def\P{\mathbb P}
\def\Q{\mathbb Q}
\def\G{\mathbb G}
\def\L{\mathbb{L}}
\def\Z{\mathbb Z}
\def\ext{\mathrm{ext}}
\def\O{\mathscr O}
\def\DDT{\mathsf{DT}}
\def\PPT{\mathsf{PT}}
\def\sc{\textrm{sc}}
\def\Ad{\textrm{Ad}}
\def\reg{\textrm{reg}}
\DeclareMathOperator{\Quot}{Quot}
\DeclareMathOperator{\Hilb}{Hilb}
\DeclareMathOperator{\Chow}{Chow}
\DeclareMathOperator{\ch}{ch}
\DeclareMathOperator{\Td}{Td}
\DeclareMathOperator{\id}{id}
\DeclareMathOperator{\Pic}{Pic}
\DeclareMathOperator{\Rep}{Rep}
\DeclareMathOperator{\vir}{vir}
\DeclareMathOperator{\QCoh}{QCoh}
\DeclareMathOperator{\Coh}{Coh}
\DeclareMathOperator{\Spec}{Spec\,}
\DeclareMathOperator{\Supp}{Supp\,}
\DeclareMathOperator{\coker}{coker}
\DeclareMathOperator{\Perf}{Perf}
\DeclareMathOperator{\DT}{DT}
\DeclareMathOperator{\PT}{PT}
\DeclareMathOperator{\GL}{GL}
\DeclareMathOperator{\dd}{d}
\DeclareMathOperator{\Tr}{Tr}
\DeclareMathOperator{\Sym}{Sym}
\DeclareMathOperator{\Ext}{Ext}
\DeclareMathOperator{\lExt}{{\mathscr Ext}}
\DeclareMathOperator{\Hom}{Hom}
\DeclareMathOperator{\lHom}{{\mathscr Hom}}
\DeclareMathOperator{\catA}{{\mathscr A}}
\DeclareMathOperator{\catB}{{\mathscr B}}
\DeclareMathOperator{\catC}{{\mathcal C}}
\DeclareMathOperator{\catT}{{\mathscr T}}
\DeclareMathOperator{\catF}{{\mathscr F}}
\DeclareMathOperator{\End}{End}
\DeclareMathOperator{\rk}{rk}
\DeclareMathOperator{\pr}{pr}
\DeclareMathOperator{\F}{\mathcal F}
\theoremstyle{definition}
\newtheorem*{lemma*}{Lemma}
\newtheorem*{theorem*}{Theorem}
\newtheorem*{example*}{Example}
\newtheorem*{fact*}{Fact}
\newtheorem*{notation*}{Notation}
\newtheorem*{definition*}{Definition}
\newtheorem*{prop*}{Proposition}
\newtheorem*{remark*}{Remark}
\newtheorem*{corollary*}{Corollary}
\newtheorem{definition}{Definition}[section]
\newtheorem{example}[definition]{Example}
\newtheorem{question}[definition]{Question}
\newtheorem{remark}[definition]{Remark}
\newtheoremstyle{thm} 
        {3mm}
        {3mm}
        {\slshape}
        {0mm}
        {\bfseries}
        {.}
        {1mm}
        {}
\theoremstyle{thm}
\newtheorem{theorem}[definition]{Theorem}
\newtheorem{corollary}[definition]{Corollary}
\newtheorem{lemma}[definition]{Lemma}
\newtheorem{prop}[definition]{Proposition}
\newtheorem{thm}{Theorem}
\newcommand*{\isoarrow}[1]{\arrow[#1,"\rotatebox{90}{\(\sim\)}"
]}
\tikzset{commutative diagrams/arrow style=math font}
\tikzset{commutative diagrams/.cd,
mysymbol/.style={start anchor=center,end anchor=center,draw=none}}
\newcommand\MySymb[2][\square]{%
  \arrow[mysymbol]{#2}[description]{#1}}
\tikzset{
shift up/.style={
to path={([yshift=#1]\tikztostart.east) -- ([yshift=#1]\tikztotarget.west) \tikztonodes}
}
}
\DeclareMathAlphabet{\mathpzc}{OT1}{pzc}{m}{it}
\newcommand*{\defeq}{\mathrel{\vcenter{\baselineskip0.5ex \lineskiplimit0pt
                     \hbox{\scriptsize.}\hbox{\scriptsize.}}}%
                     =}
\title[Virtual counts on Quot schemes and DT/PT wall-crossing]{Virtual counts on Quot schemes \\ and the higher rank local DT/PT correspondence}
\author[S. V. Beentjes]{Sjoerd V. Beentjes}
\address{School of Mathematics and Maxwell Institute,
University of Edinburgh,
James Clerk Maxwell Building,
Peter Guthrie Tait Road, Edinburgh, EH9 3FD,
United Kingdom}
\email[Sjoerd Beentjes]{sjoerd.beentjes@ed.ac.uk}
\author[A. T. Ricolfi]{Andrea T. Ricolfi}
\address{SISSA Trieste,
Via Bonomea 265, 
34136 Trieste,
Italy}
\email[Andrea Ricolfi]{aricolfi@sissa.it}
\begin{document}

\begin{abstract}
We show that the Quot scheme $\Quot_{\A^3}(\O^r,n)$ admits a symmetric obstruction theory, and we compute its virtual Euler characteristic. We extend the calculation to locally free sheaves on smooth $3$-folds, thus refining a special case of a recent Euler characteristic calculation of Gholampour--Kool. We then extend Toda's higher rank DT/PT correspondence on Calabi--Yau $3$-folds to a local version centered at a fixed slope stable sheaf. This generalises (and refines) the local DT/PT correspondence around the cycle of a Cohen--Macaulay curve. Our approach clarifies the relation between Gholampour--Kool's functional equation for Quot schemes, and Toda's higher rank DT/PT correspondence.
\end{abstract}

\maketitle

{\hypersetup{linkcolor=black}
\tableofcontents}

\section{Introduction}

Let $X$ be a smooth projective Calabi--Yau $3$-fold over $\C$.
Donaldson--Thomas (DT) invariants, introduced in \cite{ThomasThesis}, are virtual counts of stable objects in the bounded derived category $D^b(X)$ of $X$.
Particularly well-studied examples of such stable objects are \emph{ideal sheaves} (rank one torsion free sheaves with trivial determinant) of curves in $X$ and the \emph{stable pairs} of Pandharipande--Thomas (PT) \cite{PT}.
These objects are related by wall-crossing phenomena (cf.~Section \ref{sec:previouswork}), giving rise to the famous DT/PT correspondence \cite{Bri,Toda1,Toda0}.

Recently Toda \cite{Toda2} generalised the classical (rank one) DT/PT correspondence to arbitrary rank.
Let $\omega$ be an ample class on $X$, and let $(r,D) \in H^0(X) \oplus H^2(X)$ be a pair such that $r\geq 1$ and $\gcd(r,D \cdot \omega^2)=1$.
The higher rank analogues of ideal sheaves are $\omega$-slope stable torsion free sheaves of Chern character $\alpha = (r,D,-\beta,-m)$, for given $(\beta,m) \in H^4(X) \oplus H^6(X)$.
The higher rank analogues of stable pairs are certain two-term complexes $J^\bullet \in D^b(X)$ of class $\alpha$, first described by Lo (see Section \ref{sec:Moduli}), called \emph{PT pairs} \cite{Lo}.
The virtual counts of these objects can be computed as Behrend's virtual Euler characteristic of their moduli space.

Toda's higher rank wall-crossing formula \cite[Thm.~1.2]{Toda2} is the equality
\be\label{Todaformula}
\DDT_{r,D,\beta}(q) = \mathsf M((-1)^rq)^{r\chi(X)}\cdot \PPT_{r,D,\beta}(q)
\ee
of generating series.
Here $\DDT_{r,D,\beta}$ is the generating function of DT invariants of Chern characters of the form $(r,D,-\beta,-m)$, with $m \in H^6(X)$ varying, and similarly for the series $\PPT_{r,D,\beta}$. 
The ``difference'' between the DT and PT generating functions is measured by a \emph{wall-crossing factor}, expressed in terms of the MacMahon function 
\[
\mathsf M(q) = \prod_{m\geq 1}(1-q^m)^{-m},
\]
the generating function of plane partitions of natural numbers.

In \cite{Gholampour2017}, Gholampour--Kool proved a formally similar relation in the following situation.
Fix a torsion free sheaf $\mathcal F$ of rank $r$ and homological dimension at most one on a smooth projective $3$-fold $X$, not necessarily Calabi--Yau.
Then \cite[Thm.~1.1]{Gholampour2017} states the equality
\be\label{GKformula}
\sum_{n\geq 0}\chi(\Quot_X(\mathcal F,n))q^n = \mathsf M(q)^{r\chi(X)}\cdot \sum_{n\geq 0}\chi(\Quot_X(\lExt^1(\mathcal F,\O_X),n))q^n,
\ee
where $\chi$ is the topological Euler characteristic and $\Quot_X(E,n)$ is the Quot scheme of length $n$ quotients of the coherent sheaf $E$.

In this paper, we explain the formal similarity between equations~\eqref{Todaformula} and \eqref{GKformula}, answering a question raised in \cite[Sec.~1]{Gholampour2017}. Our method exhibits formula \eqref{GKformula} as an Euler characteristic shadow of a \emph{local} version of~\eqref{Todaformula} based at the sheaf $\mathcal{F}$.
Moreover, we explicitly compute (cf.~Corollary \ref{cor:locfree8237}) the $\mathcal{F}$-local DT generating series when $\mathcal{F}$ is a locally free sheaf: this can be seen (cf.~Remark \ref{rmk:comparison}) as the higher rank analogue of the point contribution to rank one DT theory, originally the first of the three MNOP conjectures, cf.~\cite[Conj.~1]{MNOP1}.

\subsection{Main results}
We give some details and state our results in order of appearance. In the final part of the Introduction (Section \ref{sec:previouswork}) we give a short outline of related work on wall-crossing and Quot schemes in Enumerative Geometry. 

To any complex scheme $Y$ of finite type, Behrend \cite{Beh} associates a canonical constructible function $\nu_Y\colon Y(\C)\ra \Z$.
We recall in Section~\ref{behrendstuff} the properties we will need.
The \emph{virtual Euler characteristic} of $Y$ is the ``motivic integral'' of $\nu_Y$, namely
\be\label{virtualchi}
\widetilde\chi(Y) \defeq \chi(Y,\nu_Y) \defeq \sum_{k\in \Z}k\cdot \chi(\nu_Y^{-1}(k)) \in \Z.
\ee
Our first result, proven in Section~\ref{sec:thmA}, is the following virtual refinement of Gholampour--Kool's formula \eqref{GKformula} in the locally free case.
\begin{thm}\label{Thm:LocallyFree}
Let $X$ be a smooth $3$-fold, $\mathcal F$ a locally free sheaf of rank $r$ on $X$. Then
\begin{equation}\label{us}
\sum_{n\geq 0}\widetilde\chi(\Quot_X(\mathcal F,n)) q^n = \mathsf M((-1)^rq)^{r\chi(X)}.
\end{equation}
\end{thm}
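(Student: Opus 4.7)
The plan is to reduce to the local model $(\A^3,\O^r)$ via a toric computation, and then globalise by a stratification argument in the spirit of Behrend--Bryan--Szendr\H{o}i. First I would use the symmetric, torus-equivariant obstruction theory on $\Quot_{\A^3}(\O^r,n)$ constructed earlier in the paper. The torus $T=(\C^*)^3\times(\C^*)^r$ has only isolated fixed points, indexed by $r$-tuples of plane partitions $\underline\pi=(\pi_1,\dots,\pi_r)$ of total size $n$ (via the monomial ideals $\bigoplus_i I_{\pi_i}\subset\O^r$). Behrend's identity for equivariant symmetric obstruction theories then gives $\nu(\underline\pi)=(-1)^{\dim_\C T_{\underline\pi}}$, so the virtual Euler characteristic reduces to the signed count $\sum_{\underline\pi}(-1)^{\dim T_{\underline\pi}}$.

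Next I would compute the parity. The tangent space splits as $T_{\underline\pi}=\bigoplus_{i,j=1}^r\Hom_{\A^3}(I_{\pi_i},\O_{\A^3}/I_{\pi_j})$. For $i=j$, the parity lemma of Behrend--Bryan--Szendr\H{o}i for $\Hilb(\A^3)$ gives $\dim\Hom(I_{\pi_i},\O/I_{\pi_i})\equiv|\pi_i|\pmod 2$. For $i\neq j$, an equivariant character computation combining the torus weights of $\O/I_{\pi_i}$, $\O/I_{\pi_j}$ and the triviality of $\omega_{\A^3}$ should yield
\[
\dim\Hom(I_{\pi_i},\O/I_{\pi_j})+\dim\Hom(I_{\pi_j},\O/I_{\pi_i})\equiv|\pi_i|+|\pi_j|\pmod 2.
\]
Summing over $i,j$ gives $\dim T_{\underline\pi}\equiv rn\pmod 2$. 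Combined with MacMahon's identity $\mathsf M(q)^r=\sum_n p_3^{(r)}(n)q^n$, where $p_3^{(r)}(n)$ counts $r$-tuples of plane partitions of total size $n$, one concludes
\[
\sum_n\widetilde\chi(\Quot_{\A^3}(\O^r,n))q^n=\sum_n(-1)^{rn}p_3^{(r)}(n)q^n=\mathsf M((-1)^r q)^r.
\]

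For a general smooth $3$-fold $X$ and locally free $\mathcal F$ of rank $r$, I would globalise using the support morphism $\sigma\colon\Quot_X(\mathcal F,n)\to\Sym^n X$, whose fibres are étale-locally on $X$ products of punctual Quot schemes. Zariski-local triviality of $\mathcal F$ identifies each punctual factor at $p\in X$ with $\Quot_{\A^3}(\O^r,n_p)_0$. Since Behrend's function is local in the étale topology and multiplicative on products, integrating over $\Sym^n X$ as in Behrend--Bryan--Szendr\H{o}i yields the power-structure identity
\[
\sum_n\widetilde\chi(\Quot_X(\mathcal F,n))q^n=\left(\sum_n\widetilde\chi(\Quot_{\A^3}(\O^r,n))q^n\right)^{\chi(X)}=\mathsf M((-1)^r q)^{r\chi(X)}.
\]

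The main obstacle is the off-diagonal parity identity above: the rank-one case is classical and the globalisation is a routine adaptation of the BBS cut-and-paste argument, but the pairwise $\pmod 2$ computation for $\Hom(I_{\pi_i},\O/I_{\pi_j})$ requires a genuine equivariant character manipulation specific to the rank-$r$ setting, and is the technical heart of the proof.
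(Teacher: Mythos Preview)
Your approach is correct but differs from the paper's in two notable respects.

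For the local model $\Quot_{\A^3}(\O^r,n)$, the paper does \emph{not} carry out a direct parity computation on tangent spaces. Instead it shows that this Quot scheme is a global critical locus (Theorem~\ref{thm:criticallocus}), invokes a computation of the associated \emph{virtual motive} (cited as \cite[Prop.~2.3.6]{ThesisR}), and specialises $\L^{1/2}\mapsto -1$ to obtain $\sum_n\widetilde\chi\,q^n=\mathsf M((-1)^rq)^r$. Your route via Behrend--Fantechi localisation and an explicit parity count is a genuine alternative; it avoids the motivic machinery at the cost of the character computation. Incidentally, you overstate the difficulty of the off-diagonal step: local Serre duality at the origin gives $\ext^i(\O/I_{\pi},\O/I_{\pi'})=\ext^{3-i}(\O/I_{\pi'},\O/I_{\pi})$, and combined with $\chi(\O/I_{\pi'},\O/I_{\pi})=0$ one finds
\[
\dim\Hom(I_{\pi},\O/I_{\pi'})+\dim\Hom(I_{\pi'},\O/I_{\pi})=|\pi|+|\pi'|+2\bigl(\ext^1-\hom\bigr),
\]
so the parity is immediate. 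One small point to check is that the extra framing torus $(\C^*)^r$ preserves the symmetric obstruction theory; this holds because the superpotential $f_n=\Tr A[B,C]$ is independent of the framing vectors.

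For the globalisation, the paper uses the same Behrend--Fantechi style stratification over $\Sym^nX$ that you propose, but with a different endpoint: it establishes the sign relation $\widetilde\chi(\Quot_X(\F,n))=(-1)^{rn}\chi(\Quot_X(\F,n))$ and then invokes the Gholampour--Kool formula \eqref{GKformula} (with $\lExt^1(\F,\O_X)=0$) for the unweighted side. Your power-structure packaging is more self-contained, since it does not appeal to \cite{Gholampour2017}, but the underlying cut-and-paste is identical to what the paper does in Section~\ref{sec:thmA} and Appendix~\ref{quotmess}.
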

The case $r = 1$, corresponding to the Hilbert scheme parametrising zero-dimensional subschemes of $X$, has been proven by Behrend--Fantechi in \cite{BFHilb}.
We establish \eqref{us} by generalising their approach (technical details are in Appendix \ref{quotmess}). See also \cite{LEPA,JLI} for different proofs for $r=1$. Note that no Calabi--Yau or projectivity assumptions on $X$ are required.

\begin{remark}
If $\mathcal F$ is a locally free sheaf, a local model for $\Quot_X(\mathcal F,n)$ is the Quot scheme $\Quot_{\A^3}(\O^r,n)$. We show that the latter is a \emph{critical locus} (Theorem \ref{thm:criticallocus}), so that in particular it carries a symmetric perfect obstruction theory in the sense of \cite{BFHilb}. This motivates the interest in the virtual Euler characteristic computed in Theorem \ref{Thm:LocallyFree}. However, even for \emph{reflexive} sheaves $\mathcal F$ over Calabi--Yau $3$-folds, we do not know when $\Quot_X(\mathcal F,n)$ carries a symmetric perfect obstruction theory.
\end{remark}

We now describe the local higher rank DT/PT correspondence.
For a given $\omega$-slope stable sheaf $\F$ of homological dimension at most one, we embed the Quot schemes
\[
\phi_{\F}\colon\Quot_X(\mathcal F)\hookrightarrow M_{\DT}(r,D),\qquad \psi_{\F}\colon \Quot_X(\lExt^1(\mathcal F,\O_X))\hookrightarrow M_{\PT}(r,D)
\]
in suitable DT and PT moduli spaces via closed immersions (cf.~Propositions~\ref{DTembedding} and~\ref{PTembedding}).
The former, $\phi_{\F}$, consists of taking the kernel of a surjection $\mathcal{F} \onto Q$.
The latter, $\psi_{\F}$, might be of independent interest, so we describe its action on $\C$-valued points here.

Let $t \colon \lExt^1(\F,\O_X) \onto Q$ be a zero-dimensional quotient.
Since $\F$ is of homological dimension at most one, there is a natural morphism
\[
\bar{t} \colon \F^{\vee} \to \lExt^1(\F,\O_X)[-1] \xrightarrow{t[-1]} Q[-1],
\]
where $(-)^{\vee} = {\textbf R}\lHom(-,\O_X)$ is the derived dualising functor.
This functor is an involutive anti-equivalence of $D^b(X)$, hence dualising again yields a canonical isomorphism
\[
\Hom(\F^{\vee},Q[-1]) \cong \Ext^1(Q^{\mathrm D}[-1],\F),
\]
where $Q^{\mathrm D} = \lExt^3(Q,\O_X)$.
We define $\psi_{\F}$ by sending $t$ to the corresponding extension
\[
\F \to J^{\bullet} \to Q^{\mathrm D}[-1]
\]
in $D^b(X)$.
We prove in Proposition~\ref{PTembedding} that this defines a higher rank PT pair and, moreover, that the association $t\mapsto J^\bullet$ extends to a morphism that is a closed immersion.

We define $\F$-\emph{local DT} and \emph{PT invariants} by restricting the Behrend weights on the full DT and PT moduli spaces via these closed immersions.\footnote{In general, these immersion are not open, so the restriction of the Behrend weight of the full moduli space does \emph{not} in general agree with the intrinsic Behrend weight of the Quot schemes.}
We collect these in a generating function
\[
\DDT_{\mathcal F}(q) = \sum_{n\geq 0}\chi\left(\Quot_X(\mathcal F,n),\nu_{\DT}\right) q^n
\]
on the DT side, and a similar generating function $\PPT_{\F}(q)$ on the PT side.

We prove the following relation (Theorem \ref{thm:DTPT}) after establishing a key identity in a certain motivic Hall algebra, and applying the Behrend weighted integration morphism.
\begin{thm}\label{thm2}
Let $X$ be a smooth projective Calabi--Yau $3$-fold, and let $\F$ be an $\omega$-stable torsion free sheaf on $X$ of rank $r$ and homological dimension at most one.
Then
\be\label{dtptF}
\DDT_{\mathcal F}(q) = \mathsf M((-1)^rq)^{r\chi(X)} \cdot \PPT_{\mathcal F}(q).
\ee
\end{thm}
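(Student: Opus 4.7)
The plan is to adapt Toda's motivic Hall algebra proof of the higher rank DT/PT correspondence \cite{Toda2}, itself building on Bridgeland's method \cite{Bri}, to the $\F$-local setting. The strategy has two steps: first, establish a wall-crossing identity in the motivic Hall algebra of a suitable heart $\catA \subset D^b(X)$ (e.g.\ the perverse heart containing both $\Coh(X)$ and Lo's PT-pairs); and second, apply the Behrend-weighted integration morphism.

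First I would introduce the $\F$-local generating elements
\[
\delta_{\F} = \sum_{n\geq 0}\bigl[\Quot_X(\F,n) \xrightarrow{\phi_{\F}} \mathfrak{M}\bigr]\, q^n,
\qquad
\pi_{\F} = \sum_{n\geq 0}\bigl[\Quot_X(\lExt^1(\F,\O_X),n) \xrightarrow{\psi_{\F}} \mathfrak{M}\bigr]\, q^n
\]
in the motivic Hall algebra, where $\mathfrak{M}$ denotes the ambient moduli stack, together with the element $\mu_0$ parametrising (suitably shifted) $0$-dimensional sheaves. The key identity to establish is
\[
\delta_{\F} = \mu_0 * \pi_{\F},
\]
with $*$ the Hall product. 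Its proof is geometric: using that $\F$ is $\omega$-slope stable of homological dimension at most one, one shows that any closed point $I = \ker(\F \twoheadrightarrow Q)$ of $\Quot_X(\F,n)$ admits a canonical two-step filtration whose associated graded consists of a Lo PT-pair (obtained by pushing $Q$ through the duality) and a residual $0$-dimensional sheaf. This is a stack-theoretic refinement, compatible with the Hall product, of Propositions \ref{DTembedding} and \ref{PTembedding}, combined with the duality that intertwines $\phi_{\F}$ and $\psi_{\F}$.

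Applying the Behrend-weighted integration morphism $I$ then concludes the proof. By definition $I(\delta_{\F}) = \DDT_{\F}(q)$ and $I(\pi_{\F}) = \PPT_{\F}(q)$. The wall-crossing factor $I(\mu_0)$ is the same factor appearing in Toda's global formula \eqref{Todaformula}: its evaluation $I(\mu_0) = \mathsf M((-1)^rq)^{r\chi(X)}$ follows from Behrend--Fantechi's computation of virtual Euler characteristics of Hilbert schemes of points \cite{BFHilb}, together with the Joyce-Song sign $(-1)^{\chi(Q,\F)} = (-1)^{r\length(Q)}$ produced by the Hall product on a Calabi--Yau $3$-fold (via Serre duality), giving exactly \eqref{dtptF}.

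The main obstacle will be the Hall algebra identity. One must work in a Hall algebra whose product precisely encodes the $\F$-local short exact sequences (of sheaves and of complexes) on both sides, and verify that Lo's PT-pair structure is preserved under this decomposition. This requires the slope stability of $\F$ to rule out spurious subsheaves entering the filtration, and the hypothesis on the homological dimension of $\F$ to guarantee that $\lExt^1(\F,\O_X)$ furnishes the correct PT-side data through the dualising functor. Once the Hall algebra identity is in place, applying the integration morphism is a by-now standard computation largely parallel to Toda's.
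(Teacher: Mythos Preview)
Your overall strategy---establish an $\F$-local Hall algebra identity and then integrate---matches the paper's, but the specific Hall algebra identity you propose is wrong, and this is a genuine gap.

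You write $\delta_{\F} = \mu_0 * \pi_{\F}$, asserting that each kernel $I = \ker(\F \twoheadrightarrow Q)$ carries a canonical filtration with a PT pair and a residual zero-dimensional piece as its graded pieces. This is not the correct geometry: a DT object (a pure sheaf) is not an extension of a PT pair by a shifted zero-dimensional sheaf in $\catA_\mu$. The actual identity, proved in the paper as Proposition~\ref{HallIdentity2}, is
\[
\delta^{\F}_{\DT} \star \delta(\mathcal C_\infty) \;=\; \delta(\mathcal C_\infty) \star \delta^{\F}_{\PT},
\]
with the zero-dimensional factor on \emph{both} sides. The objects admitting both decompositions are not DT objects themselves but more general objects of $\widetilde{\mathscr B}_\mu$; Lemma~\ref{Closed_Points_13} shows that an extension of $Q[-1]$ by a PT pair based at $\F$ is the same thing as an extension of some $P[-1]$ by a kernel in $\Quot_X(\F)$, and vice versa. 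Your one-sided identity cannot hold: the left side has bounded $\ch_3$ while the right side does not.

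This has a second consequence you gloss over. Because the correct identity is a conjugation $\delta^{\F}_{\DT} = \delta(\mathcal C_\infty) \star \delta^{\F}_{\PT} \star \delta(\mathcal C_\infty)^{-1}$, one cannot simply apply the integration morphism and multiply: the Behrend-weighted map $I^B$ is only a Poisson morphism on the semi-classical quotient, not a ring homomorphism for $\star$. The paper handles this by taking $\epsilon(\mathcal C_\infty) = \log \delta(\mathcal C_\infty)$, rewriting the conjugation via Baker--Campbell--Hausdorff as $\exp(\Ad(\epsilon(\mathcal C_\infty)))$ acting on $\delta^{\F}_{\PT}$, invoking Joyce's no-poles theorem to make $(\L-1)\epsilon(\mathcal C_\infty)$ regular, and only then applying $I^B$ to an expression built from Poisson brackets. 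The MacMahon factor emerges from the $\mathsf N$-invariants via the rank-one identity of \cite{BFHilb}, not from a naive evaluation $I(\mu_0)$.
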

Applying the unweighted integration morphism instead, taking Euler characteristics, we recover the main result of \cite{Gholampour2017} in the special case where $\F$ is slope-stable.
\begin{thm}\label{thm3}
Let $X$ be a smooth projective $3$-fold, and let $\F$ be an $\omega$-stable torsion free sheaf on $X$ of rank $r$ and homological dimension at most one.
Then Gholampour--Kool's formula \eqref{GKformula} holds.
\end{thm}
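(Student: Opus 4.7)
The plan is to run the same motivic Hall algebra argument that proves Theorem~\ref{thm2}, but apply the \emph{unweighted} integration morphism (ordinary topological Euler characteristic) in place of the Behrend-weighted one. The Hall identity is a stack-function-level decomposition of the $\F$-local DT stack as a convolution of the $\F$-local PT stack against a universal wall-crossing factor supported on zero-dimensional quotients. Neither this identity nor the closed immersions $\phi_\F$ and $\psi_\F$ from Propositions~\ref{DTembedding} and~\ref{PTembedding} require $X$ to be Calabi--Yau; that hypothesis enters Theorem~\ref{thm2} only through the multiplicative behaviour of the Behrend function $\nu$ under the weighted integration morphism, which is why it can be dropped here.

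Concretely, I would first verify that via $\phi_\F$ and $\psi_\F$, the unweighted integration morphism sends the $\F$-local DT and PT series respectively to
\[
\sum_{n\geq 0}\chi(\Quot_X(\F,n))\,q^n, \qquad \sum_{n\geq 0}\chi(\Quot_X(\lExt^1(\F,\O_X),n))\,q^n.
\]
Next, I would evaluate the image of the universal wall-crossing factor under the unweighted integration morphism. In Theorem~\ref{thm2} that factor produced $\mathsf M((-1)^r q)^{r\chi(X)}$, where the sign $(-1)^r$ is contributed by the Behrend function on the local models $\Quot_{\A^3}(\O^r,n)$; stripping the Behrend weight removes this sign and turns the universal contribution into $\mathsf M(q)^{r\chi(X)}$. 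The unsigned MacMahon exponent $r\chi(X)$ can be obtained exactly as in Theorem~\ref{Thm:LocallyFree}, via a torus localisation on $\Quot_{\A^3}(\O^r,n)$ whose fixed locus is indexed by $r$-tuples of plane partitions, but without the parity sign since $\nu$ is no longer inserted.

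The main delicate step is to check that the Hall algebra identity and the extraction of the universal factor from it, as used in the proof of Theorem~\ref{thm2}, genuinely take place at the stack-function level and use the Calabi--Yau hypothesis \emph{only} at the final integration step. Provided this holds, the argument for Theorem~\ref{thm3} becomes a verbatim repetition of the proof of Theorem~\ref{thm2}, with the weighted integration morphism $\chi(-,\nu)$ replaced by the unweighted $\chi(-)$, and \eqref{GKformula} then follows by comparing coefficients of $q^n$. The subsidiary technical point is ensuring that the unweighted integration morphism respects the Hall algebra convolution in the form needed -- a standard feature of Joyce's framework, which I would cite directly rather than re-derive.
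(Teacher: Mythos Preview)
Your overall strategy matches the paper's: the same $\F$-local Hall algebra identity (Proposition~\ref{HallIdentity2}) is fed into the unweighted integration morphism $I^E$ in place of $I^B$, and the Calabi--Yau hypothesis is dropped because it was only needed for the Behrend-weighted map. That part is exactly right.

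However, you gloss over the one genuinely delicate point. You write that the compatibility of the unweighted integration morphism with the Hall convolution is ``a standard feature of Joyce's framework''. It is not. For a general smooth projective $3$-fold, $I^E$ is a morphism of commutative algebras but \emph{not} a morphism of Poisson algebras; see Remark~\ref{rem:Integration_Morphisms}. The argument works here only because the adjoint action in the proof of Theorem~\ref{thm:DTPT} is by $\overline\epsilon(\mathcal C_\infty)$, which is supported on \emph{zero-dimensional} objects. For such objects Serre duality reads $\Ext^i(Z,E)\cong\Ext^{3-i}(E,Z)^\ast$ regardless of whether $K_X$ is trivial, and this is precisely what makes $I^E(\{f,g\})=\{I^E(f),I^E(g)\}$ hold in the case at hand. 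You should isolate and state this, rather than cite it as routine.

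A secondary confusion: the wall-crossing factor $\mathsf M(q)^{r\chi(X)}$ does not arise from torus localisation on $\Quot_{\A^3}(\O^r,n)$, and Theorem~\ref{Thm:LocallyFree} plays no role here. In the proof of Theorem~\ref{thm:DTPT} the factor comes from the $\mathsf N$-invariants $\mathsf N_{m,0}$ counting zero-dimensional sheaves, together with the Euler pairing $\chi((0,0,0,-m),(r,D,-\beta,-n))=rm$; the sign $(-1)^{rm}$ appears because the quantum torus for $I^B$ uses $\sigma=-1$, and disappears for $I^E$ because there $\sigma=+1$. Your attribution of the sign to the Behrend function on local models, and of the MacMahon exponent to a direct Quot-scheme count, misidentifies the mechanism.
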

Since the formula of Gholampour--Kool descends from the same Hall algebra identity giving rise to \eqref{dtptF}, one can interpret it as an Euler characteristic shadow of the $\F$-local higher rank DT/PT correspondence, in the case that $\F$ is $\omega$-slope stable.

In the final section, we consider some special cases by imposing further restrictions on $\F$.
In particular, the following result is the `intersection' between Theorems \ref{Thm:LocallyFree} and \ref{thm2}.
\begin{corollary}\label{cor:locfree8237}
With the assumptions of Theorem~\ref{thm2}, if $\F$ is locally free then 
\[
\DDT_{\F}(q) = \mathsf M((-1)^rq)^{r\chi(X)}.
\]
\end{corollary}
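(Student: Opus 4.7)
The plan is to derive Corollary~\ref{cor:locfree8237} as an essentially immediate consequence of Theorem~\ref{thm2}: once $\F$ is locally free, the $\F$-local PT side collapses to $1$, and the result reduces to a substitution.

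First I would note that if $\F$ is locally free then $\lExt^i(\F,\O_X)=0$ for every $i\geq 1$, since locally free sheaves are acyclic for $\lHom(-,\O_X)$. In particular $\lExt^1(\F,\O_X)=0$, so $\Quot_X(\lExt^1(\F,\O_X),n)=\emptyset$ for every $n\geq 1$, while for $n=0$ we have the reduced point corresponding to the zero quotient. Thus the $\F$-local PT series reduces to its $n=0$ term. Since the closed immersion $\psi_\F$ sends this point to $\F\in M_{\PT}(r,D)$, and the $\F$-local generating series is normalized so that this contribution is $1$, we obtain $\PPT_\F(q)=1$.

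Substituting $\PPT_\F(q)=1$ into the $\F$-local DT/PT correspondence of Theorem~\ref{thm2} then yields
\[
\DDT_\F(q) \;=\; \mathsf M((-1)^r q)^{r\chi(X)}\cdot \PPT_\F(q) \;=\; \mathsf M((-1)^r q)^{r\chi(X)},
\]
which is the claim.

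The only point requiring a moment of care is the $n=0$ normalization on the PT side. I do not expect this to be a serious obstacle: both embeddings $\phi_\F$ and $\psi_\F$ carry the trivial-quotient $n=0$ point to $\F$ itself in the respective ambient moduli spaces, so the restricted Behrend contributions at $n=0$ on the two sides of Theorem~\ref{thm2} agree; compatibility with the constant term $1$ of the MacMahon factor on the right-hand side then forces this common contribution to equal $1$, closing the argument.
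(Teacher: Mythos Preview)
Your approach is the same as the paper's: since $\F$ is locally free one has $\lExt^1(\F,\O_X)=0$, hence $\Quot_X(\lExt^1(\F,\O_X),n)$ is empty for $n\geq 1$, so $\PPT_{\F}(q)$ reduces to its constant term; then substitute into Theorem~\ref{thm2}. The paper's proof (Corollary~\ref{cor101}) is exactly this, in two lines.

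One remark on your final paragraph. You correctly flag that the $n=0$ term needs attention, but your argument that it equals $1$ is circular: you observe that $\DDT_{\F,0}$ and $\PPT_{\F,0}$ agree (both being the Behrend value at $\F$ in the respective ambient spaces), and then appeal to the constant term of Theorem~\ref{thm2}, which reads $\DDT_{\F,0}=1\cdot\PPT_{\F,0}$. But this equation is satisfied by \emph{any} common value, so nothing forces it to be $1$. The paper's own proof simply asserts $\PPT_{\F,0}=1$ ``from the definitions'' without elaboration, so your treatment is not worse than the paper's here --- but you should be aware that your stated justification does not actually pin down the constant.
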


We show in Proposition~\ref{prop:PT_tensor_by_line_bundle} the invariance property $\PPT_{\F \otimes L}(q) = \PPT_{\F}(q)$, where $L$ is a line bundle on $X$. Finally, Corollary \ref{cor292} shows that $\PPT_{\F}$ is a polynomial if $\F$ is reflexive.

\subsection{Previous work on wall-crossing}\label{sec:previouswork}
Both the DT invariant and the Euler characteristic
\[
\widetilde\chi(M_X(\alpha))\in \Z,\quad \chi(M_X(\alpha))\in \Z
\]
can be seen as ways to size, or ``count points'' in the moduli space
$M_X(\alpha)$ of stable torsion free sheaves of class $\alpha$.
Unlike the virtual invariants, the Euler characteristic is not \emph{deformation invariant}.
Indeed, deformation invariance of $\widetilde\chi(M_X(\alpha))$ is a consequence of the virtual class technology available for $M_X(\alpha)$.
Nonetheless, the virtual and the naive invariants share a common behavior: both satisfy \emph{wall-crossing formulas}.
These are relations describing the transformation that the invariants undergo when one deforms not the complex structure of $X$, but rather the \emph{stability condition} defining the moduli space.

The first calculations in DT theory, and their original motivation, involved ideal sheaves, namely the rank one DT objects; see for instance \cite{MNOP1,BFHilb}.

The DT/PT correspondence was first phrased rigorously as a wall-crossing phenomenon by Bayer \cite{Bayer1} using polynomial stability conditions. It was established in the rank one case by Bridgeland \cite{Bri} and Toda \cite{Toda1} for the virtual invariants, and previously by Toda \cite{Toda0} for Euler characteristics.

Similar wall-crossing formulas hold for the Quot schemes
\[
\Quot_X(\mathscr I_C,n)\subset \Hilb(X),
\]
where $C\subset X$ is a fixed Cohen--Macaulay curve and $X$ need not be Calabi--Yau.
See work of Stoppa--Thomas \cite{ST} for the naive invariants, and \cite{LocalDT} for the virtual ones (where also projectivity is not needed, but $C$ is required to be smooth).

In \cite{Ob1,Ricolfi2018}, a cycle-local DT/PT correspondence is proved in rank one.
Cycle-local DT invariants gather contributions of those ideals $\mathscr I_{Z}\subset \O_X$ for which the cycle $[Z]$ is equal to a fixed element of the Chow variety of $X$.
Specialising to the case $\F = \mathscr I_C$, where $C \subset X$ is a Cohen--Macaulay curve, our Theorem~\ref{thm2} refines this cycle-local DT/PT correspondence to an $\mathscr I_C$-local correspondence around the fixed subscheme $C\subset X$ (see Remark \ref{rmk:cyclelocal}).

In the recent work \cite{Lo2018}, Lo approaches the problem of relating $\Quot_X(\lExt^1(\F,\O_X))$ to the moduli space $M_{\PT}(r,D)$ from a categorical point of view, constructing a functor `the other way around', i.e.~from PT pairs to quotients.
It would be interesting to find out the precise relationship between Lo's functor and our closed immersion.

\subsection*{Acknowledgements}
We would like to thank Arend Bayer, Martijn Kool, Georg Oberdieck, and Richard Thomas for helpful comments and suggestions during the completion of this work. We would also like to thank the referee for a careful reading of the paper, and for corrections and suggestions. This project was set up during a visit of A.R. to the University of Edinburgh, and it was completed during a visit of S.B. to the Max-Planck Institute in Bonn.
We thank both institutions for their hospitality and excellent working conditions. S.B. was supported by the ERC Starting Grant no.~337039 \emph{WallXBirGeom}.

\subsection*{Conventions}
We work over $\C$.
All rings, schemes, and stacks will be assumed to be \emph{locally of finite type} over $\C$, unless specified otherwise. All categories and functors will be $\C$-linear. A Calabi--Yau $3$-fold is a smooth projective $3$-fold $X$ such that $K_X=0$ and $H^1(X,\O_X)=0$.
If $M$ is a scheme, we write $D^b(M)$ for the \emph{bounded coherent} derived category of $M$ and $\Perf(M)\subset D^b(M)$ for the category of perfect complexes.
We write $\Coh_0(M)$ (resp.~$\Coh_{\leq 1}(M)$) for the full subcategory of coherent sheaves on $M$ of dimension zero (resp.~of dimension at most $1$). The \emph{homological dimension} of a coherent sheaf $\F$ on a smooth projective variety is the minimal length of a locally free resolution of $\F$. For a sheaf $E\in \Coh(X)$ of codimension $c$, we set $E^{\mathrm D} = \lExt^c(E,\O_X)$.\footnote{This differs slightly from the notation used in \cite{modulisheaves}, where $E^{\mathrm D}$ denotes $\lExt^c(E,\omega_X)$.}
We write $\Quot_X(E,n)$ for the Quot scheme of length $n$ quotients of $E$, and we set 
$\Quot_X(E)=\coprod_{n\geq 0}\Quot_X(E,n)$.
We refer the reader to \cite{modulisheaves} for generalities on moduli spaces of sheaves and to \cite[Sec.~1]{PicScheme} for base change theory for local ext.

\section{Quotients of a free sheaf on affine 3-space}
B.~Szendr\H{o}i proved in \cite[Theorem 1.3.1]{MR2403807} that the Hilbert scheme of points $\Hilb^n\A^3$ is a global critical locus. The goal of this section is to prove that the same is true, more generally, for the Quot scheme
\[
\Quot_{\A^3}(\O^r,n),
\]
for all $r\geq 1$ and $n\geq 0$.  
In other words, we will show that the Quot scheme can be written as the scheme-theoretic zero locus of an exact one-form $\dd f$, where $f$ is a regular function defined on a smooth scheme. In particular, this proves that $\Quot_{\A^3}(\O^r,n)$ carries a symmetric perfect obstruction theory, defined by the Hessian of $f$. By inspecting the virtual \emph{motivic} refinements of these critical loci, we deduce the formula
\[
\sum_{n\geq 0}\widetilde\chi(\Quot_{\A^3}(\O^r,n)) q^n = \mathsf M((-1)^rq)^r.
\]
In Section~\ref{sec:thmA}, we generalise this formula to arbitrary locally free sheaves on smooth quasi-projective $3$-folds, thus establishing Theorem~\ref{Thm:LocallyFree}.

\subsection{Quiver representations}

Let $Q$ be a quiver, i.e.~a finite directed graph. We denote by $Q_0$ and $Q_1$ the sets of vertices and edges of $Q$ respectively.
A representation $M$ of $Q$ is the datum of a finite dimensional vector space $V_i$ for every $i\in Q_0$, and a linear map $V_i\ra V_j$ for every edge $i\ra j$ in $Q_1$.
The \emph{dimension vector} of $M$ is
\[
\underline{\dim}\, M = (\dim V_i)\in \mathbb N^{Q_0}.
\]
It is well known that the representations of $Q$ form an abelian category, that is moreover equivalent to the category of left modules over the path algebra $\C Q$ of the quiver.

Following \cite{King}, we recall the notion of (semi)stability of a representation of $Q$.

\begin{definition}\label{centralcharge}
A \emph{central charge} is a group homomorphism $Z\colon \mathbb Z^{Q_0}\ra \C$ such that the image of $\mathbb N^{Q_0}\setminus 0$ lies inside $\mathbb H_+ = \set{re^{i\pi\varphi}|r>0,\,0<\varphi\leq 1}$. For every $\alpha\in \mathbb N^{Q_0}\setminus 0$, we denote by $\varphi(\alpha)$ the real number $\varphi$ such that $Z(\alpha) = re^{i\pi\varphi}$. It is called the \emph{phase} of $\alpha$.
\end{definition}

Note that every vector $\theta\in \R^{Q_0}$ induces a central charge $Z_{\theta}$ given by
\[
Z_{\theta}(\alpha) = -\theta\cdot \alpha + i|\alpha|,
\]
where $|\alpha| = \sum_i\alpha_i$. We denote by $\varphi_\theta$ the induced phase function, and we put
\[
\varphi_\theta(M) = \varphi_\theta(\underline{\dim}\,M)
\]
for every $Q$-representation $M$.

\begin{definition}\label{stablereps}
Fix $\theta\in \R^{Q_0}$. Then a representation $M$ of $Q$ is called \emph{$\theta$-semistable} if 
\[
\varphi_\theta(A)\leq \varphi_\theta(M)
\]
for every nonzero proper subrepresentation $A\subset M$. When strict inequality holds, we say that $M$ is \emph{$\theta$-stable}. Vectors $\theta\in \R^{Q_0}$ are referred to as \emph{stability parameters}.
\end{definition}

Fix a stability parameter $\theta\in \R^{Q_0}$. To each $\alpha\in\mathbb N^{Q_0}\setminus 0$ one can associate its \emph{slope} (with respect to $\theta$), namely the rational number
\[
\mu_\theta(\alpha) = \frac{\theta\cdot\alpha}{|\alpha|}\in\Q.
\]
It is easy to see that $\varphi_\theta(\alpha)<\varphi_\theta(\beta)$ if and only if $\mu_\theta(\alpha)<\mu_\theta(\beta)$. So, after setting $\mu_\theta(M) = \mu_\theta(\underline{\dim}\,M)$, one can check stability using slopes instead of phases.

\subsection{Framed representations}
Let $Q$ be a quiver with a distinguished vertex $0\in Q_0$, and let $r$ be a positive integer. Consider the quiver $\widetilde Q$ obtained by adding one vertex $\infty$ to the original vertices in $Q_0$ and $r$ edges $\infty\ra 0$.
If $r = 1$, this construction is typically referred to as a \emph{framing} of $Q$. 

A representation $\widetilde M$ of $\widetilde Q$ can be uniquely written as a pair $(M,v)$, where $M$ is a representation of $Q$ and $v = (v_1,\dots,v_r)$ is an $r$-tuple of linear maps $v_i\colon V_\infty\ra V_0$. We always assume our framed representations to satisfy $\dim V_\infty = 1$, so that
\[
\underline{\dim}\,\widetilde M=(1,\underline{\dim}\,M).
\]
The vector space $V_{\infty}$ will be left implicit.

\begin{definition}\label{framedstability}
Let $\theta\in \mathbb R^{Q_0}$ be a stability parameter.
A representation $(M,v)$ of $\widetilde Q$ with $\dim V_\infty = 1$ is said to be \emph{$\theta$-(semi)stable} if it is $(\theta_\infty,\theta)$-(semi)stable in the sense of Definition \ref{stablereps}, where $\theta_\infty = -\theta\cdot \underline{\dim}\,M$.
\end{definition}

The space of all representations of $Q$, of a given dimension vector $\alpha\in \mathbb N^{Q_0}$, is the affine space
\[
\Rep_\alpha(Q) = \prod_{i\ra j}\Hom_\C(\C^{\alpha_i},\C^{\alpha_j}).
\]
On this space there is an action of the gauge group $\GL_\alpha = \prod_{i\in Q_0} \GL_{\alpha_i}$ by simultaneous conjugation. The quotient stack $\Rep_\alpha(Q)/\GL_\alpha$ parametrises isomorphism classes of representations of $Q$ with dimension vector $\alpha$. Imposing suitable stability conditions, one can restrict to $\GL_\alpha$-invariant open subschemes of $\Rep_\alpha(Q)$ such that the induced action is free, so that the quotient is a smooth \emph{scheme}. We will do this in the next subsection for framed representations of the three loop quiver.

\subsection{The three loop quiver}
Consider the quiver $\mathsf L_3$ with one vertex and three loops, labelled $x$, $y$ and $z$. A representation of $\mathsf L_3$ is the datum of a left module over the free algebra
\[
\C\langle x,y,z\rangle
\]
on three generators, which is the path algebra of $\mathsf L_3$.
We now add $r$ framings at the vertex, thus forming the quiver $\widetilde{\mathsf L}_{3}$ (see Fig.~\ref{L3quiver}).

\begin{figure}[ht]
\begin{tikzpicture}[>=stealth,->,shorten >=2pt,looseness=.5,auto]
  \matrix [matrix of math nodes,
           column sep={3cm,between origins},
           row sep={3cm,between origins},
           nodes={circle, draw, minimum size=7.5mm}]
{ 
|(A)| \infty & |(B)| 0 \\         
};
\tikzstyle{every node}=[font=\small\itshape]
\path[->] (B) edge [loop above] node {$x$} ()
              edge [loop right] node {$y$} ()
              edge [loop below] node {$z$} ();

\node [anchor=west,right] at (-0.2,0.1) {$\vdots$};
\node [anchor=west,right] at (-0.3,0.95) {$v_1$};              
\node [anchor=west,right] at (-0.3,-0.85) {$v_r$};              
\draw (A) to [bend left=25,looseness=1] (B) node [midway,above] {};
\draw (A) to [bend left=40,looseness=1] (B) node [midway] {};
\draw (A) to [bend right=35,looseness=1] (B) node [midway,below] {};
\end{tikzpicture}\caption{The quiver $\widetilde{\mathsf L}_3$ with its $r$ framings.}\label{L3quiver}
\end{figure}

Letting $V_n$ be a fixed $n$-dimensional vector space, representations of $\widetilde{\mathsf L}_{3}$ with dimension vector $(1,n)$ form the affine space
\[
\mathcal R_{n,r} = \End(V_n)^3\times V_n^r,
\]
of dimension $3n^2+rn$. Consider the open subset
\be\label{unr}
U_{n,r}\subset \mathcal R_{n,r}
\ee
parametrising tuples $(A,B,C,v_1,\dots,v_r)$ such that the vectors $v_1,\dots,v_r$ span the underlying (unframed) representation $(A,B,C)\in \Rep_n(\mathsf L_3)$. Equivalently, the $r$ vectors $v_i$ span $V_n$ as a $\C\langle x,y,z\rangle$-module.

Consider the action of $\GL_n$ on $\mathcal R_{n,r}$ given by 
\[
g\cdot (A,B,C,v_1,\dots,v_r) = (A^g,B^g,C^g,g v_1,\dots,g v_r),
\]
where $A^g$ denotes the conjugation $gAg^{-1}$ by $g\in \GL_n$. This action is free on $U_{n,r}$.
Thus the GIT quotient $U_{n,r}/\GL_n$ with respect to the character $\det \colon \GL_n \to \C^{\times}$ is a smooth quasi-projective variety.

We now show that the open set $U_{n,r}$ parametrises stable framed representations of $\mathsf L_3$, with stability understood in the sense of Definition \ref{framedstability}.

If $r=1$, then for a point $(A,B,C,v)\in U_{n,1}$ one usually says that $v$ is a \emph{cyclic vector}. For $r>1$, we say that $v_1,\dots,v_r$ \emph{jointly generate} a representation $M=(A,B,C)\in \Rep_n(\mathsf L_3)=\End(V_n)^3$ if $(M,v_1,\dots,v_r)\in U_{n,r}$. 

\begin{prop}
Let $\widetilde M = (M,v_1,\dots,v_r)$ be a representation of the framed quiver\, $\widetilde{\mathsf L}_{3}$ depicted in Figure \ref{L3quiver}.
	Choose a vector $\theta = (\theta_1,\theta_2)$ with $\theta_1 > \theta_2$. Then $\widetilde M$ is $\theta$-stable if and only if $v_1,\ldots,v_r$ jointly generate $M$.
\end{prop}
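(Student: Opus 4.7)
The plan is to classify every proper nonzero subrepresentation $\widetilde N\subseteq\widetilde M$ and compare its slope to $\mu_\theta(\widetilde M)=(\theta_1+n\theta_2)/(n+1)$. Because $\dim V_\infty=1$, the dimension vector of any such $\widetilde N$ is of exactly one of two types: \emph{type A}, with dimension vector $(0,k)$ for some $0<k\le n$, or \emph{type B}, with dimension vector $(1,k)$ for some $0\le k<n$. Type A subrepresentations correspond bijectively to nonzero subrepresentations $N\subseteq M$ of the unframed quiver $\mathsf L_3$, since when the $\infty$-component is zero the framings impose no extra compatibility condition. Type B subrepresentations correspond bijectively to proper subrepresentations $N\subsetneq M$ containing all of $v_1,\dots,v_r$, since the framings of $\widetilde N$ are inherited from $\widetilde M$ and must land inside the $0$-component of $\widetilde N$.

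A direct slope computation then gives
\[
\mu_\theta(\widetilde M)-\mu_\theta(\widetilde N)=\frac{\theta_1-\theta_2}{n+1}\quad(\text{type A}),\qquad \mu_\theta(\widetilde N)-\mu_\theta(\widetilde M)=\frac{(n-k)(\theta_1-\theta_2)}{(n+1)(k+1)}\quad(\text{type B}).
\]
Under the hypothesis $\theta_1\ge\theta_2$ both expressions are non-negative, and both are strictly positive when $\theta_1>\theta_2$ (using $k<n$ in the type B case). So type A subrepresentations never strictly destabilize $\widetilde M$, whereas every type B subrepresentation strictly destabilizes it as soon as $\theta_1>\theta_2$.

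Combining these observations, $\widetilde M$ is $\theta$-stable if and only if no type B subrepresentation exists, which is exactly the statement that no proper subrepresentation of $M$ contains all of $v_1,\dots,v_r$, i.e.\ that the framing vectors jointly generate $M$. The main obstacle is nothing more than careful bookkeeping: correctly classifying the two kinds of subrepresentations of the framed object and keeping the signs straight in the slope comparison. One mild subtlety worth flagging is the degenerate boundary $\theta_1=\theta_2$, in which all slopes coincide and no nonzero framed representation is strictly $\theta$-stable; the substantive content of the proposition really requires the strict inequality $\theta_1>\theta_2$.
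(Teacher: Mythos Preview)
Your proof is correct and follows essentially the same strategy as the paper's: classify proper nonzero subrepresentations of $\widetilde M$ according to whether the $\infty$-component is $0$ or $1$-dimensional, and compare slopes in each case. Your observation about the degenerate boundary $\theta_1=\theta_2$ is a valid one that the paper's own argument also glosses over (its case $d_1=0$ only yields $\mu_\theta(\widetilde N)\le\mu_\theta(\widetilde M)$, which is not a contradiction to destabilising when equality holds).
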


\begin{proof}
Suppose that $v_1,\ldots,v_r$ jointly generate a proper subrepresentation $0\neq W\subsetneq M$.
We obtain a subrepresentation $\widetilde N = (W,v_1,\dots,v_r)\subset \widetilde M$ of dimension vector $(1,d)$ with $0 < d = \dim W < n$.
We claim that $\widetilde N$ destabilises $\widetilde M$. 
Indeed, the inequality
\begin{equation*}
	\mu_{\theta}(\widetilde N) = \dfrac{\theta_1 + d\theta_2}{1+d} > \dfrac{\theta_1 + n\theta_2}{1+n} = \mu_{\theta}(\widetilde M)
\end{equation*}
holds if and only if $\theta_1 > \theta_2$, which holds by assumption.
Since stability and semistability coincide for this choice of dimension vector, we conclude that $\widetilde M$ is unstable.

For the converse, suppose that $v_1,\ldots,v_r$ jointly generate $M$.	If $\widetilde M$ is not stable, there exists a non-trivial proper destabilising subrepresentation $0 \neq \widetilde N \subsetneq \widetilde M$ of dimension vector $(d_1,d_2)$ with $0 \leq d_1 \leq 1$ and $0 \leq d_2 \leq n$. There are two cases to consider.
	\begin{enumerate}
		\item If $d_1 = 1$, it follows that $d_2 = n$ since $v_1,\ldots,v_r$ jointly generate $M$.
			But then $\widetilde N = \widetilde M$, which is a contradiction.
		\item If $d_1 = 0$ then $d_2 > 0$, and we directly compute
			\begin{equation*}
				\mu_{\theta}(\widetilde N) = \dfrac{d_2\theta_2}{d_2} = \theta_2
					= \dfrac{(1+n)\theta_2}{1+n} < \dfrac{\theta_1+n\theta_2}{1+n} = \mu_{\theta}(\widetilde M)
			\end{equation*}
			because $\theta_2 < \theta_1$.
			But this contradicts the fact that $\widetilde N$ destabilises $\widetilde M$.
	\end{enumerate}
	It follows that $\widetilde M$ is $\theta$-stable.
	This completes the proof.
\end{proof}

\subsection{The non-commutative Quot scheme}

In this section, we write
\[
R=\C\langle x,y,z\rangle
\]
for the free (non-commutative) $\C$-algebra on three generators, and for a complex scheme $B$, we denote by $R_B$ the sheaf of $\O_B$-algebras associated to the presheaf $R\otimes_\C\O_B=\mathscr O_B\braket{x,y,z}$.
We consider the functor 
\begin{equation}\label{eq:NonCommutative_Quot_Functor}
\mathfrak Q_{n,r}\colon\Sch_{\C}^{\op}\ra \Sets
\end{equation}
sending a $\C$-scheme $B$ to the set of isomorphism classes of triples $(M,p,\beta)$, where
\benum
\item $M$ is a left $R_B$-module, locally free of rank $n$ over $\O_B$,
\item $p\colon R_B^r\surj M$ is an $R_B$-linear epimorphism, and
\item $\beta\subset \Gamma(B,M)$ is a basis of $M$ as an $\O_B$-module.
\eenum
Two triples $(M,p,\beta)$ and $(M',p',\beta')$ are considered isomorphic if there is a commutative diagram
\be\label{diag:oblin}
\begin{tikzcd}
R_B^r\arrow[two heads]{r}{p} \arrow[equal]{d} & M\arrow{d}{\varphi} \\
R_B^r\arrow[two heads]{r}{p'} & M'
\end{tikzcd}
\ee
with $\varphi$ an $\O_B$-linear isomorphism transforming $\beta$ into $\beta'$. We denote by $\braket{M,p,\beta}$ the corresponding isomorphism class.

One can also define a functor 
\[
\overline{\mathfrak Q}_{n,r}\colon\Sch_{\C}^{\op}\ra \Sets
\]
by letting $\overline{\mathfrak Q}_{n,r}(B)$ be the set of isomorphism classes of pairs $(M,p)$ just as above, but where no basis of $M$ is chosen. Here, as before, we declare that $(M,p)\sim (M',p')$ when there is a commutative diagram as in \eqref{diag:oblin}. 

Notice that, by considering the kernel of the surjection, an equivalence class $\braket{M,p}$ uniquely determines a left $R_B$-module $N\subset R_B^r$ (such that the quotient $R_B^r/N$ is a locally free $\O_B$-module). The proof of the following result is along the same lines of \cite[Sec.~2]{NCHilb1}.  

\begin{theorem}\label{ncquot}
The smooth quasi-affine scheme $U_{n,r}$ defined in \eqref{unr} represents the functor $\mathfrak Q_{n,r}$, whereas the GIT quotient $U_{n,r}/\GL_n$ represents the functor $\overline{\mathfrak Q}_{n,r}$.
\end{theorem}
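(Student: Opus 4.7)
The plan is to construct mutually inverse natural transformations between $\Hom(-, U_{n,r})$ and $\mathfrak Q_{n,r}$, and then to descend along the free $\GL_n$-action to obtain representability of $\overline{\mathfrak Q}_{n,r}$ by the GIT quotient $U_{n,r}/\GL_n$. This mirrors the strategy of \cite[Sec.~2]{NCHilb1} with extra bookkeeping to handle the $r$ framing sections rather than a single cyclic vector.

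First I build a universal family on $U_{n,r}$. Take $\mathcal M = V_n \otimes_\C \O_{U_{n,r}}$, the trivial rank $n$ bundle. The three $\End(V_n)$-factors of $\mathcal R_{n,r}$ provide universal endomorphisms $\mathcal A, \mathcal B, \mathcal C \in \End_{\O_{U_{n,r}}}(\mathcal M)$, equipping $\mathcal M$ with a left $R_{U_{n,r}}$-module structure via $x \mapsto \mathcal A$, $y \mapsto \mathcal B$, $z \mapsto \mathcal C$. The remaining $V_n^r$-factor yields $r$ universal sections $\mathcal V_i$ of $\mathcal M$, which assemble into an $R_{U_{n,r}}$-linear map $\mathcal P \colon R_{U_{n,r}}^r \to \mathcal M$ sending $e_i \mapsto \mathcal V_i$. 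The open condition carving $U_{n,r}$ out of $\mathcal R_{n,r}$ is exactly pointwise surjectivity of $\mathcal P$; Nakayama's lemma applied to the coherent cokernel then upgrades this to genuine surjectivity of sheaves. Together with the tautological basis $\beta_{\can}$ of $V_n$, this gives a triple $(\mathcal M, \mathcal P, \beta_{\can}) \in \mathfrak Q_{n,r}(U_{n,r})$, and hence by pullback a natural transformation $\Phi \colon \Hom(-, U_{n,r}) \to \mathfrak Q_{n,r}$.

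For the inverse $\Psi$, given $(M, p, \beta) \in \mathfrak Q_{n,r}(B)$, the basis $\beta$ furnishes an $\O_B$-linear isomorphism $M \cong V_n \otimes \O_B$. Under this identification, the $R_B$-action is recorded by three global sections of $\End(V_n) \otimes \O_B$, and the surjection $p$ by the $r$ sections $v_i \defeq p(e_i)$ of $V_n \otimes \O_B$. This data is equivalent to a morphism $B \to \mathcal R_{n,r}$, and surjectivity of $p$ on every fibre forces this morphism to factor through $U_{n,r}$. The checks $\Phi \circ \Psi = \id$ and $\Psi \circ \Phi = \id$ then reduce to unwinding how bases and $R$-module structures transform under pullback, proving the first assertion.

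For the unframed statement, the $\GL_n$-action on $U_{n,r}$ precisely implements the change-of-basis ambiguity in diagram \eqref{diag:oblin}, and joint generation forces trivial stabilisers, so the action is free; GIT with the character $\det \colon \GL_n \to \C^\times$ then produces a smooth quotient scheme. To see that $U_{n,r}/\GL_n$ represents $\overline{\mathfrak Q}_{n,r}$ I argue by descent: given $(M, p) \in \overline{\mathfrak Q}_{n,r}(B)$, one can Zariski-locally on $B$ choose bases of $M$ and apply $\Psi$ to obtain morphisms to $U_{n,r}$; these differ on overlaps by elements of $\GL_n$ and therefore descend to a morphism $B \to U_{n,r}/\GL_n$, with the reverse direction obtained by pulling back the universal framed family along a local trivialisation of the induced $\GL_n$-torsor and then forgetting the basis. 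The main technical obstacle is the passage from pointwise joint generation to surjectivity of the sheaf-level map $\mathcal P$ in the construction of $\Phi$, handled by Nakayama, together with the verification that the freeness of the $\GL_n$-action makes the GIT quotient a fine (rather than merely coarse) moduli space for $\overline{\mathfrak Q}_{n,r}$.
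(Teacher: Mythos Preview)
Your proposal is correct and follows essentially the same route as the paper: construct the universal framed triple on $U_{n,r}$ and its inverse from the data of a basis, then for the quotient choose local bases to get local maps to $U_{n,r}$ differing by $\GL_n$ on overlaps and glue. The only cosmetic difference is that the paper packages the forward direction on the quotient via the equivalence $\QCoh(\overline U_{n,r})\simeq\QCoh_{\GL_n}(U_{n,r})$ to descend the universal equivariant bundle $M_0$ to a sheaf $\mathscr M$ on $U_{n,r}/\GL_n$, whereas you phrase the same descent in terms of local trivialisations of the induced torsor; the content is identical.
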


\begin{proof}
Let $V=\C^n$ with its standard basis $e_1,\dots,e_n$. Consider the free module $M_0=V\otimes_\C\O_{\mathcal R_{n,r}}$ with basis
$\beta_0=\set{e_j\otimes 1:1\leq j\leq n}$. Let $(X_{ij},Y_{ij},Z_{ij},u_{1}^k,\ldots,u_{r}^k)$ be the coordinates on the affine space $\mathcal R_{n,r}$. Here $1\leq i,j\leq n$ correspond to matrix entries and $1\leq k\leq n$ to vector components.
Then $M_0$ has distinguished elements
\[
\mathsf v_\ell=\sum_{j} e_j\otimes u_{\ell}^j,\qquad 1\leq \ell \leq r. 
\]
For each $\ell = 1,\ldots,r$, consider the $R$-linear map $\theta_\ell\colon R_{\mathcal R_{n,r}} \ra M_0$ given by $\theta_\ell(\mathbb 1)=\mathsf v_\ell$. Then we can construct the map $\theta_0 = \oplus_\ell \theta_\ell\colon R_{\mathcal R_{n,r}}^r\ra M_0$. 
Restricting the triple $(M_0,\theta_0,\beta_0)$ to $U_{n,r}\subset \mathcal R_{n,r}$ gives a morphism of functors 
\[
U_{n,r}\ra \mathfrak Q_{n,r},
\]
whose inverse is constructed as follows.
Let $B$ be a scheme, set again $V=\C^n$ and fix a $B$-valued point $\braket{M,\theta,\beta}\in \mathfrak Q_{n,r}(B)$. The $R$-action on $\beta\subset \Gamma(B,M)$ determines three endomorphisms $(X,Y,Z)\colon B\ra \End(V)^3$. On the other hand, decomposing the map $\theta\colon R_B^r\surj M$ into $r$ maps $\theta_\ell\colon R_B\ra M$ and taking $v_\ell = \theta_\ell(\mathbb 1)$ determines a morphism $(v_1,\ldots,v_r)\colon B\ra V^r$. We have thus constructed a morphism $f\colon B\ra \mathcal R_{n,r}$. The surjectivity of $\theta$ says that $f$ factors through $U_{n,r}$. Therefore $U_{n,r}$ represents $\mathfrak Q_{n,r}$.

Next, let $\pi\colon U_{n,r}\ra \overline U_{n,r} = U_{n,r}/\GL_n$ be the quotient map, which we know is a principal $\GL_n$-bundle.
This implies that $\pi^\ast\colon \QCoh(\overline U_{n,r})\,\widetilde{\ra}\,\QCoh_{\GL_n}(U_{n,r})$ is an equivalence of categories, preserving locally free sheaves~\cite[Prop.~$4.5$]{NCHilb1}. Consider the universal triple $\braket{M_0,\theta_0,\beta_0}$ defined above. Since $M_0$ is a $\GL_n$-equivariant vector bundle on $U_{n,r}$, it follows that, up to isomorphism, there is a unique locally free sheaf $\mathscr M$ on $\overline U_{n,r}$ such that $\pi^\ast \mathscr M\isom M_0$. In fact, $\mathscr M\isom (\pi_\ast M_0)^{\GL_n}\subset \pi_\ast M_0$, the subsheaf of $\GL_n$-invariant sections. The $r$ sections $\mathsf v_\ell$, being $\GL_n$-invariant, descend to sections of $\mathscr M$, still denoted $\mathsf v_\ell$. These generate $\mathscr M$ as an $R_{\overline U_{n,r}}$-module, so we get a surjection $\vartheta\colon R_{\overline U_{n,r}}^r\surj \mathscr M$ sending $\mathbb 1 \mapsto \mathsf v_\ell$ in the $\ell$-th component.
In particular, the pair $\braket{\mathscr M,\vartheta}$ defines a morphism of functors 
\[
\overline U_{n,r}\ra \overline{\mathfrak Q}_{n,r}.
\]
We need to construct its inverse. Let $B$ be a scheme and fix a $B$-valued point $\braket{N,\theta}\in \overline{\mathfrak Q}_{n,r}(B)$. Let $(B_i:i\in I)$ be an open cover of $B$ such that $N_i=N|_{B_i}$ is free of rank $n$ over $\O_{B_i}$. Decompose $\theta = \theta_1\oplus \cdots\oplus \theta_r$ into $r$ maps $\theta_\ell\colon R_B\ra N$. Choose a basis $\beta_i\subset \Gamma(B_i,N_i)$ and let $v_{\ell,i}=\theta_\ell(\mathbb 1)|_{B_i}$ be the restriction of $\theta_\ell(\mathbb 1)\in \Gamma(B,N)$ to $B_i\subset B$. 
As usual, the tuple $(v_{\ell,i}:1\leq \ell\leq r)$ defines a linear surjection $\theta_i\colon R_{B_i}^r\surj N_i$. Each triple $\braket{N_i,\theta_i,\beta_i}$ then defines a point $\psi_i\colon B_i\ra U_{n,r}$, and for all indices $i$ and $j$ there is a matrix $g\in \GL_n(\O_{B_{ij}})$ sending $\beta_i$ to $\beta_j$. In other words, $g$ defines a map $g\colon B_{ij}\ra \GL_n$ such that $g\cdot\psi_i=\psi_j$. Then $\pi\circ \psi_i$ and $\pi\circ \psi_j$ agree on $B_{ij}$, and this determines a unique map to the quotient $f\colon B\ra \overline U_{n,r}$, satisfying $(N,\theta)\sim f^\ast(\mathscr M,\vartheta)$. This shows that $\overline U_{n,r}$ represents $\overline{\mathfrak Q}_{n,r}$.
\end{proof}

As a consequence of this result, the $B$-valued points of the quotient $U_{n,r}/\GL_n$ can be identified with left $R_B$-submodules
\[
N\subset R_B^r
\]
with the property that the quotient is locally free of rank $n$ over $\O_B$. Because it represents the functor of quotients $R^r\surj V_n$, and $R$ is a non-commutative $\C$-algebra, we refer to $U_{n,r}/\GL_n$ as a \emph{non-commutative Quot scheme}, and we introduce the notation
\begin{equation*}
\Quot^n_r = U_{n,r}/\GL_n.
\end{equation*}
Note that $\Quot^n_1$ is the \emph{non-commutative Hilbert scheme}.
Finally, the GIT construction implies that $\Quot^n_r$ is a smooth quasi-projective variety of dimension $2n^2+rn$.

\subsection{The potential}
On the three-loop quiver, consider the potential 
\[
W = x[y,z]\in \C\mathsf L_3,
\]
where $[-,-]$ denotes the commutator.
The associated trace map $U_{n,r}\ra \A^1$, defined by $(A,B,C,v_1,\ldots,v_r)\mapsto \Tr A[B,C]$, is $\GL_n$-invariant.\footnote{The vectors $v_i$ are not involved in the definition of the map. They are only needed to define its domain.}
Thus, it descends to a regular function on the quotient,
\begin{equation}
f_n\colon \Quot^n_r \ra \A^1.
\end{equation}

We now show that $\Quot_{\A^3}(\O^r,n)$ is the scheme-theoretic critical locus of $f_n$.

\begin{theorem}\label{thm:criticallocus}
There is a closed immersion
\[
\Quot_{\A^3}(\O^r,n)\hookrightarrow \Quot^n_r
\]
cut out scheme-theoretically by the exact one-form $\dd f_n$.
\end{theorem}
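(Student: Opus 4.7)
The plan is to compute the exact one-form $\dd f$ upstairs on $U_{n,r}$, identify its scheme-theoretic zero locus with the commuting triples jointly generated by the framing vectors, and then descend to the GIT quotient $\Quot^n_r$.

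First, writing $f(A,B,C,v_1,\ldots,v_r) = \Tr(ABC - ACB)$ and using cyclic invariance of the trace, a direct variation computation yields
\[
\dd f \;=\; \Tr\!\bigl([B,C]\,\dd A\bigr) \;+\; \Tr\!\bigl([C,A]\,\dd B\bigr) \;+\; \Tr\!\bigl([A,B]\,\dd C\bigr)
\]
as a $\GL_n$-invariant one-form on $\mathcal R_{n,r}$. Since the framing vectors $v_\ell$ do not enter, the scheme-theoretic zero locus of $\dd f$ inside $U_{n,r}$ is the closed subscheme $Z\subset U_{n,r}$ cut out by the matrix entries of the three commutators $[A,B]$, $[A,C]$, $[B,C]$.

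Next, I would read $Z$ through the functorial description of $U_{n,r}$ from Theorem \ref{ncquot}. A $B$-valued point of $U_{n,r}$ is a triple $\braket{M,p,\beta}$ in which the endomorphisms recording the $R_B$-action on $M$ in the basis $\beta$ are, universally, the matrices $(A,B,C)$. Thus the ideal cutting out $Z$ inside $U_{n,r}$ is exactly the universal ideal forcing these three endomorphisms to commute, equivalently the ideal forcing the $R_B$-action to factor through the canonical surjection $R_B \twoheadrightarrow \O_B[x,y,z]$. Hence $Z$ represents the subfunctor of $\mathfrak Q_{n,r}$ consisting of those $\braket{M,p,\beta}$ for which $M$ is an $\O_B[x,y,z]$-module, locally free of rank $n$ over $\O_B$, and $p$ is $\O_B[x,y,z]$-linear. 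Forgetting $\beta$ and passing to $Z/\GL_n$ produces precisely the functor of flat families of length $n$ quotients of $\O_{\A^3}^{\,r}$, giving a canonical isomorphism $Z/\GL_n \cong \Quot_{\A^3}(\O^r,n)$.

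Finally, because $\pi\colon U_{n,r}\to \Quot^n_r$ is a principal $\GL_n$-bundle (smooth and faithfully flat) and $f = f_n\circ\pi$, one has $\pi^{\ast}(\dd f_n) = \dd f$. Flat descent then identifies the scheme-theoretic zero locus of $\dd f_n$ in $\Quot^n_r$ with $Z/\GL_n$, yielding the asserted closed immersion $\Quot_{\A^3}(\O^r,n) \hookrightarrow \Quot^n_r$ cut out by $\dd f_n$. The substantive point, and the only place requiring care, is the scheme-theoretic (not merely set-theoretic) content of the middle step: one must know that the ideal generated by the matrix entries of the commutators agrees with the universal commutativity ideal on the moduli functor. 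This is handled cleanly by the functor-of-points description of $U_{n,r}$, in which $A$, $B$, $C$ literally represent the universal $R_B$-action in the universal basis, so the two ideals coincide tautologically; no separate tangent-space or flatness argument is required.
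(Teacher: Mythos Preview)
Your proof is correct and follows essentially the same approach as the paper's: both arguments identify $\Quot_{\A^3}(\O^r,n)$ inside $\Quot^n_r$ as the locus where the three universal endomorphisms commute, via the functor-of-points description of Theorem~\ref{ncquot}, and then identify that locus with the critical locus of $f_n$. The only difference is that the paper cites \cite[Prop.~3.8]{Seg} for the scheme-theoretic equality of the commutator-vanishing locus with the critical locus, whereas you compute $\dd f$ directly upstairs on $U_{n,r}$ and descend via the principal $\GL_n$-bundle; your version is thus more self-contained and makes the descent step explicit.
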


\begin{proof}
Let $B$ be a scheme. Observe that there is an inclusion of sets
\[
\Quot_{\A^3}(\O^r,n)(B)\subset \Quot^n_r(B).
\]
A $B$-valued point $[N]$ of the non-commutative Quot scheme defines a $B$-valued point of the commutative Quot scheme if and only if the $R$-action on the corresponding $R_B$-module $N$ descends to a $\C[x,y,z]$-action. This happens precisely when the actions of $x$, $y$ and $z$ on $N$ commute with each other. 
Let then $Z\subset \Quot^n_r$ be the image of the zero locus 
\[
\Set{(A,B,C,v_1,\ldots,v_r) \mid [A,B]=[A,C]=[B,C]=0}\subset U_{n,r}
\]
under the quotient map.
Then $[N]$ belongs to $\Quot_{\A^3}(\O^r,n)(B)$ if and only if the corresponding morphism $B\ra \Quot^n_r$ factors through $Z$. But $Z$ agrees, as a scheme, with the critical locus of $f_n$, by \cite[Prop.~$3.8$]{Seg}.
\end{proof}

It follows that $\Quot_{\A^3}(\O^r,n)$ has a symmetric obstruction theory determined by the Hessian of $f_n$. We refer to \cite{BFHilb} for more details on symmetric obstruction theories.

Every scheme $Z$ which can be written as $\set{\dd f = 0}$ for $f$ a regular function on a smooth scheme $U$ has a \emph{virtual motive}, depending on the pair $(U,f)$. According to \cite{BBS}, this is a class 
\[
[Z]_{\vir} \in \mathcal M=K^{\hat\mu}(\Var/\C)\left[\L^{-1/2}\right]
\]
in the $\hat\mu$-equivariant ring of motivic weights, satisfying the property $\chi [Z]_{\vir} = \widetilde\chi(Z)$, where $\widetilde\chi(-)$ denotes the virtual Euler characteristic, as in \eqref{virtualchi}. Here $\L = [\A^1]$ is the Lefschetz motive, and the Euler characteristic homomorphism $\chi\colon K(\Var/\C)\ra \Z$ from the classical Grothendieck ring of varieties is extended to the ring $\mathcal M$ by sending $\L^{1/2}$ to $-1$.
The class 
\[
[Z]_{\vir} = -\L^{-(\dim U)/2}\cdot [\phi_f] \in \mathcal M
\]
is constructed by means of the motivic vanishing cycle class $[\phi_f]$ introduced by Denef--Loeser \cite{DenefLoeser1}. We refer to \cite{BBS} for more details.

Theorem \ref{thm:criticallocus} then produces a virtual motive $[\Quot_{\A^3}(\O^r,n)]_{\vir}\in\mathcal M$ in the ring of equivariant motivic weights, by means of the pair $(\Quot^n_r,f_n)$. Form the generating function
\[
\mathsf Z_r(t) = \sum_{n\geq 0}\, \left[\Quot_{\A^3}(\O^r,n)\right]_{\vir}\cdot t^n\in \mathcal M\llbracket t\rrbracket.
\]
Following the calculation carried out in \cite{BBS} for $r=1$, one can prove the following.

\begin{prop}[{\cite[Prop.~2.3.6]{ThesisR}}]
One has the relation
\be\label{id:mot}
\mathsf Z_{r}(t) = \prod_{m=1}^\infty \prod_{k = 0}^{rm-1}\left(1-\L^{2+k-rm/2}t^m\right)^{-1}.
\ee
\end{prop}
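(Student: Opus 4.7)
The plan is to extend the motivic vanishing cycle computation of Behrend--Bryan--Szendr\H{o}i \cite{BBS} from the case $r = 1$ (where $\Quot_{\A^3}(\O, n) = \Hilb^n\A^3$) to arbitrary $r \geq 1$. By Theorem \ref{thm:criticallocus} we have the critical-locus presentation $\Quot_{\A^3}(\O^r, n) = \{\dd f_n = 0\}$ inside the smooth quasi-projective scheme $\Quot^n_r$ of dimension $2n^2 + rn$, so the virtual motive $[\Quot_{\A^3}(\O^r, n)]_{\vir} \in \mathcal M$ is assembled from the motivic vanishing cycle $[\phi_{f_n}]$ via $[Z]_{\vir} = -\L^{-(\dim U)/2}[\phi_{f_n}]$.

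The first step is to exploit the natural torus action of $T = (\C^*)^3 \times (\C^*)^r$ on $U_{n,r}$ that scales the three loops $A, B, C$ with weights $t_1, t_2, t_3$ and the $r$ framing vectors $v_1, \ldots, v_r$ with weights $s_1, \ldots, s_r$. This action commutes with $\GL_n$ and descends to $\Quot^n_r$, and the potential $f_n = \Tr A[B, C]$ transforms as the character $t_1 t_2 t_3$ of the $(\C^*)^3$-factor, so on the codimension-one subtorus $T_0 = \{t_1 t_2 t_3 = 1\} \times (\C^*)^r$ it is $T_0$-invariant. The $T_0$-fixed locus of $\Quot_{\A^3}(\O^r, n) \subset \Quot^n_r$ is zero-dimensional and indexed by $r$-tuples $(\pi_1, \ldots, \pi_r)$ of plane partitions with $\sum_i|\pi_i| = n$; each such tuple corresponds to the obvious surjection $\O^r_{\A^3} \onto \bigoplus_i \O_{\A^3}/I_{\pi_i}$ onto a direct sum of monomial quotients.

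The second step is to apply the $T_0$-equivariant motivic vanishing cycle formalism of \cite{BBS} to express $[\phi_{f_n}]$ as a sum of local contributions over the $T_0$-fixed points. Each contribution is determined by the $T_0$-weights on the Zariski tangent space to $\Quot^n_r$ at the fixed point together with the restriction of $f_n$ to a $T_0$-equivariant normal slice; summing these contributions over $n$ and organising the result via the motivic power structure on $\mathcal M$ then produces the claimed product formula \eqref{id:mot}.

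The main difficulty is the combinatorial-weight bookkeeping: one must track the $T_0$-character of the tangent--obstruction complex at each $r$-tuple fixed point $(\pi_1, \ldots, \pi_r)$ and verify that after the Lefschetz twist these assemble into the stated product. For $r = 1$ this calculation is the content of \cite[Sec.~2]{BBS}, where the single-partition contributions are computed via the $K$-theoretic equivariant vertex and shown to collapse into a MacMahon-type product; for general $r$ the same strategy applies, with the extra multiplicity coming from the $r$ framings accounting for the exponent $rm$ in the inner product. The full verification is carried out in \cite[Prop.~2.3.6]{ThesisR}.
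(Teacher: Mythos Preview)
The paper does not actually prove this proposition: it merely cites it from \cite[Prop.~2.3.6]{ThesisR} and remarks that the argument follows the calculation of \cite{BBS} carried out for $r=1$. Your outline is therefore consistent with what the paper indicates, and there is nothing further to compare against here.
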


The same motivic formula was proved independently by A.~Cazzaniga \cite[Thm.~2.2.4]{Cazzaniga_Thesis}.

\begin{corollary}
One has the relation
\[
\sum_{n\geq 0}\widetilde\chi(\Quot_{\A^3}(\O^r,n)) t^n = \mathsf M((-1)^rt)^r.
\]
\end{corollary}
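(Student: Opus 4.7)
The approach is to apply the Euler characteristic ring homomorphism $\chi\colon \mathcal M \to \Z$ (sending $\L^{1/2}\mapsto -1$) coefficient-wise to the motivic identity \eqref{id:mot} from the preceding proposition. By the construction of the virtual motive recalled above, $\chi[\Quot_{\A^3}(\O^r,n)]_{\vir} = \widetilde\chi(\Quot_{\A^3}(\O^r,n))$ for each $n \geq 0$, so the image of $\mathsf Z_r(t)$ under $\chi$ is exactly the desired generating series.

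For the right-hand side, I would apply $\chi$ factor-by-factor to the double product in \eqref{id:mot}. Writing the exponent as $2 + k - rm/2 = (4 + 2k - rm)/2$, the sign convention gives $\chi(\L^{2+k-rm/2}) = (-1)^{4+2k-rm} = (-1)^{rm}$, which is crucially independent of the inner index $k$. Hence the inner product over $k \in \{0,1,\ldots,rm-1\}$ collapses to
\[
\prod_{m\geq 1}\bigl(1 - (-1)^{rm}\,t^m\bigr)^{-rm}.
\]

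To conclude, I would expand the claimed right-hand side directly from the definition of the MacMahon function:
\[
\mathsf M\bigl((-1)^r t\bigr)^r = \prod_{m\geq 1}\bigl(1 - ((-1)^r t)^m\bigr)^{-rm} = \prod_{m\geq 1}\bigl(1 - (-1)^{rm} t^m\bigr)^{-rm},
\]
which matches the expression obtained above. This handles the parities of $r$ uniformly: for $r$ even both sides reduce to $\mathsf M(t)^r$, while for $r$ odd one gets $\mathsf M(-t)^r$. The reduction is formal once \eqref{id:mot} is available, so there is no genuine obstacle; the only care needed is in tracking the sign convention $\chi(\L^{1/2}) = -1$ so that the offset $4+2k$ contributes trivially and the surviving parity $(-1)^{rm}$ lines up with the substitution $q \mapsto (-1)^r t$ inside $\mathsf M$.
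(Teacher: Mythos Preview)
Your argument is correct and is precisely the paper's own proof, just with the sign computation written out in full: apply $\chi$ to \eqref{id:mot}, use $\chi(\L^{1/2})=-1$ to collapse each inner factor to $(1-(-1)^{rm}t^m)^{-1}$, and recognise the result as $\mathsf M((-1)^r t)^r$.
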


\begin{proof}
This follows by applying $\chi$ to \eqref{id:mot}, and using that $\chi(\L^{-1/2}) = -1$.
\end{proof}

From the corollary, along with the observation that 
\[
\sum_{n\geq 0}\chi(\Quot_{\mathbb A^3}(\mathscr O^{\oplus r},n))q^n = \mathsf M(q)^r,
\]
we obtain an identity
\begin{equation}\label{chi_local_quot}
\widetilde\chi(\Quot_{\mathbb A^3}(\mathscr O^{\oplus r},n)) = (-1)^{rn} \chi(\Quot_{\mathbb A^3}(\mathscr O^{\oplus r},n)).
\end{equation}

The Quot scheme $\Quot_{\mathbb A^3}(\mathscr O^{\oplus r},n)$ carries an action by the torus
\[
\mathbf T = (\mathbb C^\times)^3 \times (\mathbb C^\times)^r,
\]
where the $(\mathbb C^\times)^3$-action is the natural lift of the action on $\mathbb A^3$, and thus moves the support of the quotients, and 
the torus $(\mathbb C^\times)^r$ acts by scaling the fibres of $\mathscr O^{\oplus r}$. 

\begin{remark}\label{remark:fixed_loci}
The $(\mathbb C^\times)^3$-fixed locus of $\Quot_{\mathbb A^3}(\mathscr O^{\oplus r},n)$ is compact, 
because it is a closed subscheme of the punctual Quot scheme
\[
\mathsf P_n = \Quot_{\mathbb A^3}(\mathscr O^{\oplus r},n)_0,
\]
the locus of quotients entirely supported at the origin.
But $\mathsf P_n$ is proper, being a fibre of the Quot-to-Chow morphism (which is a proper morphism). 
It follows that the $\mathbf T$-fixed locus
\[
\Quot_{\mathbb A^3}(\mathscr O^{\oplus r},n)^{\mathbf T} \subset \Quot_{\mathbb A^3}(\mathscr O^{\oplus r},n)^{(\mathbb C^\times)^3}
\]
is also compact. We give a precise description in Lemma \ref{lemma:T_fixed_points} below.
\end{remark}

For later purpose, we now determine the $\mathbf T$-fixed locus for this action.

\begin{lemma}\label{lemma:T_fixed_points}
There is an isomorphism of schemes
\[
\Quot_{\mathbb A^3}(\mathscr O^{\oplus r},n)^{\mathbf T} = \coprod_{n_1+\cdots+n_r=n}\prod_{i=1}^r \Hilb^{n_i}(\mathbb A^3)^{(\mathbb C^\times)^3}.
\]
In particular, the $\mathbf T$-fixed locus is isolated and compact. It parameterises direct sums $\mathscr I_{Z_1} \oplus \cdots \oplus \mathscr I_{Z_r}$ of monomial ideals.
\end{lemma}

\begin{proof}
The main result of \cite{Bifet} shows that 
\begin{equation}\label{Bifet_theorem}
\Quot_{\mathbb A^3}(\mathscr O^{\oplus r},n)^{(\mathbb C^\times)^r} = \coprod_{n_1+\cdots+n_r=n}\prod_{i=1}^r \Hilb^{n_i}(\mathbb A^3).
\end{equation}
The claimed isomorphism follows by taking $(\mathbb C^\times)^3$-invariants. Since $\Hilb^{k}(\mathbb A^3)^{(\mathbb C^\times)^3}$ is isolated (a disjoint union of reduced points, each corresponding to a plane partition of $k$), the result follows.
\end{proof}

\section{Quotients of a locally free sheaf on an arbitrary 3-fold}\label{sec:thmA}

The goal of this section is to prove Theorem \ref{Thm:LocallyFree}. We follow the cut-and-paste technique of Behrend--Fantechi \cite{BFHilb}, also used in \cite{LocalDT}.

Let $X$ be a smooth quasi-projective $3$-fold and let $\F$ be a locally free sheaf of rank $r$. We will show that
\be\label{signedchi}
\chi(\Quot_X(\F,n),\nu) = (-1)^{rn}\chi(\Quot_X(\F,n)).
\ee
This was proved for $\F = \O_X$ in \cite{BFHilb,LEPA,JLI} and for the torsion free sheaf $\F=\mathscr I_C$, where $C\subset X$ is a smooth curve, in \cite{LocalDT}.
Combined with the Euler characteristic calculation \cite[Thm.~1.1]{Gholampour2017} recalled in \eqref{GKformula}, formula \eqref{signedchi} proves Theorem \ref{Thm:LocallyFree}.

\subsection{The virtual Euler characteristic}\label{behrendstuff}
Let $X$ be a complex scheme, and let $\nu_X\colon X(\C)\ra \Z$ be its Behrend function \cite{Beh}. The virtual (or weighted) Euler characteristic of $X$, as recalled in \eqref{virtualchi}, is the integer
\begin{equation*}
\widetilde\chi(X) \defeq \chi(X,\nu_X) \defeq \sum_{k\in \Z}k\cdot \chi(\nu_X^{-1}(k)).
\end{equation*}
Given a morphism $f\colon Z\ra X$, one defines the \emph{relative} virtual Euler characteristic by $\widetilde\chi(Z,X) = \chi(Z,f^\ast \nu_X)$.
We now recall the properties of $\nu$ and $\widetilde\chi$ we will need later. 
\begin{enumerate}
\item [$\circ$] If $f\colon Z\ra X$ is \'etale, then $\nu_Z = f^\ast \nu_X$.
\item [$\circ$] If $Z_1$, $Z_2\subset X$ are disjoint locally closed subschemes, one has
\be\label{ChiSum}
\widetilde\chi(Z_1\amalg Z_2,X) = \widetilde\chi(Z_1,X)+\widetilde\chi(Z_2,X).
\ee
\item [$\circ$] Given two morphisms $Z_i \ra X_i$, one has
\be\label{ChiProduct}
\widetilde\chi(Z_1\times Z_2,X_1\times X_2)=\widetilde\chi(Z_1,X_1)\cdot \widetilde\chi(Z_2,X_2).
\ee
\item [$\circ$] If one has a commutative diagram
\[
\begin{tikzcd} 
Z \arrow{r}{} \arrow{d}{}
&X \arrow{d}{}\\
W \arrow{r}{} &Y
\end{tikzcd}
\]
with $X\ra Y$ smooth and $Z\ra W$ finite \'etale of degree $d$, then
\be\label{ChiSquare}
\widetilde\chi(Z,X)=d(-1)^{\dim X/Y}\widetilde\chi(W,Y).
\ee
As a special case, if $X\ra Y$ is \'etale (for instance, an open immersion) and $Z\ra X$ is a morphism, then 
\be\label{ChiEtale}
\widetilde \chi(Z,X)=\widetilde\chi(Z,Y).
\ee
\end{enumerate}

\subsection{Reduction to the deepest stratum}
Consider the Quot-to-Chow morphism
\[
\sigma\colon\Quot_X(\F,n) \ra \Sym^nX
\]
sending $[\F\onto Q]$ to the zero cycle given by the support of $Q$. This is a morphism of schemes by \cite[Cor.~7.15]{Rydh1}. For every partition $\alpha$ of $n$, we have the locally closed subscheme $\Sym^n_\alpha X$ of $\Sym^nX$ parametrising zero-cycles with multiplicities distributed according to $\alpha$. The restrictions
\[
\sigma_\alpha\colon \Quot_X(\F,n)_\alpha \ra \Sym^n_\alpha X
\]
induce a locally closed stratification of the Quot scheme.
The most interesting stratum is the deepest one, corresponding to the full partition $\alpha = (n)$.
The map 
\be\label{mor:phc}
\sigma_{(n)}\colon\Quot_X(\F,n)_{(n)}\ra X
\ee
has fibre over a point $p\in X$ the \emph{punctual} Quot scheme $\Quot_X(\F,n)_p$, parametrising quotients whose target is supported entirely at $p$. Note that
\[
\Quot_X(\F,n)_p\isom\Quot_X(\O_X^r,n)_p\isom \Quot_{\A^3}(\O^r,n)_0,
\]
where for the first isomorphism we need $\F$ locally free.
We will see in Lemma \ref{lemma:ZLT} below that \eqref{mor:phc} is Zariski locally trivial with fibre the punctual Quot scheme. From now on we shorten
\[
\mathsf P_n = \Quot_{\A^3}(\O^r,n)_0\,,\quad \nu_n = \nu_{\Quot_{\A^3}(\O^r,n)}\big|_{\mathsf P_n}.
\]
Given a partition $\alpha = (\alpha_1,\ldots,\alpha_{s_\alpha})$ of $n$, we set 
\[
Q_\alpha = \prod_{i = 1}^{s_\alpha}\Quot_X(\F,\alpha_i),
\]
and we let 
\[
V_\alpha \subset Q_\alpha
\]
be the open subscheme parametrising quotient sheaves with pairwise disjoint support. Then, by the results of Section \ref{quotmess} (cf.~Proposition \ref{prop:etaleV}), $V_\alpha$ admits an \'etale  map $f_\alpha$ to the Quot scheme $\Quot_X(\F,n)$, and we let $U_\alpha$ denote its image. 
Note that $U_\alpha$ contains the stratum $\Quot_X(\F,n)_\alpha$ as a closed subscheme. We can then form the cartesian square
\be\label{diag1313}
\begin{tikzcd}[row sep=large,column sep=normal]  
Z_\alpha\MySymb{dr}\arrow[hook]{r}{} \arrow[swap]{d}{\textrm{Galois}} & V_\alpha \arrow{d}{f_\alpha}\arrow[hook]{r}{\textrm{open}} & Q_\alpha\\
\Quot_X(\F,n)_\alpha \arrow[hook]{r}{} & U_\alpha\arrow[hook]{r}{\textrm{open}} & \Quot_X(\F,n)
\end{tikzcd}
\ee
defining $Z_\alpha$. The leftmost vertical map is finite \'etale with Galois group $G_\alpha$, the automorphism group of the partition. We observe that $Z_\alpha = \prod_i\Quot_X(\F,\alpha_i)_{(\alpha_i)}\setminus \widetilde\Delta$, where $\widetilde\Delta$ parametrises $s_\alpha$-tuples of sheaves with intersecting supports. Therefore we have a second fibre square
\be\label{diag6565}
\begin{tikzcd}[row sep=large,column sep=normal]  
Z_\alpha\MySymb{dr} \arrow[hook]{r}{} \arrow[swap]{d}{\pi_\alpha} & 
\prod_i\Quot_X(\F,\alpha_i)_{(\alpha_i)}\arrow{d} \\
X^{s_\alpha}\setminus \Delta \arrow[hook]{r} & X^{s_{\alpha}}
\end{tikzcd}
\ee
where the vertical map on the right is the product of punctual Quot-to-Chow morphisms \eqref{mor:phc}, the horizontal inclusions are open immersions, and $\Delta \subset X^{s_{\alpha}}$ denotes the big diagonal.

\subsection{Calculation}
Recall that the absolute $\widetilde\chi$ is not additive on strata, but that the $\widetilde\chi$ relative to a morphism is additive.
Exploiting the diagram \eqref{diag1313}, we compute

\begin{align*}
\widetilde\chi(\Quot_X(\F,n)) &=
\sum_{\alpha}\widetilde\chi\left(\Quot_X(\F,n)_\alpha,\Quot_X(\F,n)\right) 
& \textrm{by }\eqref{ChiSum}\\
&= \sum_{\alpha}\widetilde\chi\left(\Quot_X(\F,n)_\alpha,U_\alpha\right) 
& \textrm{by }\eqref{ChiEtale}\\
&= \sum_{\alpha}|G_\alpha|^{-1}\widetilde\chi(Z_\alpha,V_\alpha)
& \textrm{by }\eqref{ChiSquare}\\
&= \sum_{\alpha}|G_\alpha|^{-1}\widetilde\chi\left(Z_\alpha,Q_\alpha\right).
& \textrm{by }\eqref{ChiEtale}\\
\end{align*}

Before giving an expression of $\widetilde\chi(Z_\alpha,Q_\alpha)$ via the fibre square \eqref{diag6565}, we make a few technical observations.

\begin{lemma}\label{lemma:punctual}
We have a canonical isomorphism
\[
\A^3\times \mathsf P_n\,\widetilde{\ra}\,\Quot_{\A^3}(\O^r,n)_{(n)}
\]
and the restriction of the Behrend function of $\Quot_{\A^3}(\O^r,n)$ to its deepest stratum is the pullback of $\nu_n$ along the projection to $\mathsf P_n$.
\end{lemma}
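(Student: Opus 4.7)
The plan is to exploit the translation action of $\A^3$ on itself. This action lifts canonically to $\Quot_{\A^3}(\O^r,n)$: for any $p \in \A^3$, translation $t_p$ is an automorphism of $\A^3$ and $\O^r$ is canonically equivariant for this action, so functoriality of the Quot functor supplies an action morphism $a \colon \A^3 \times \Quot_{\A^3}(\O^r,n) \to \Quot_{\A^3}(\O^r,n)$ acting on points by $a(p,[\O^r \onto Q]) = [\O^r \onto t_p^\ast Q]$. This action is compatible with the Quot-to-Chow morphism $\sigma$, and in particular restricts to the deepest stratum while covering the translation action on $\A^3 = \Sym^n_{(n)}\A^3$.

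For the first assertion, I would produce the required isomorphism as the restriction of $a$ to $\A^3 \times \mathsf P_n$, using the inclusion $\mathsf P_n \into \Quot_{\A^3}(\O^r,n)_{(n)}$ as the fiber of $\sigma_{(n)}$ over the origin. A two-sided inverse is the product of $\sigma_{(n)} \colon \Quot_{\A^3}(\O^r,n)_{(n)} \to \A^3$ with the ``translate back to the origin'' morphism $[Q] \mapsto t_{-\sigma_{(n)}([Q])}^\ast[Q]$, which by construction lands in $\mathsf P_n$; checking that the two compositions are the identity is a routine matter on functors of points, since a quotient belonging to the deepest stratum is entirely determined by its support and its translate to the origin.

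For the statement about the Behrend function, I would use the intrinsic nature of $\nu$: any scheme automorphism $\phi$ of a complex scheme $Y$ satisfies $\phi^\ast \nu_Y = \nu_Y$. Applied to the automorphism of $\Quot_{\A^3}(\O^r,n)$ induced by translation by an arbitrary $p \in \A^3$, this shows that $\nu_{\Quot_{\A^3}(\O^r,n)}$ is constant along the $\A^3$-orbits. Under the isomorphism of the first part these orbits are precisely the fibers of the projection to $\mathsf P_n$, so the restricted Behrend function is pulled back from $\mathsf P_n$, where it agrees with $\nu_n$ by the very definition of $\nu_n$.

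The only mildly technical step I anticipate is the rigorous construction of the $\A^3$-action on $\Quot_{\A^3}(\O^r,n)$, together with its compatibility with Chow; but this is automatic from functoriality of the Quot functor applied to the group scheme $(\A^3,+)$ and the canonical equivariant structure on the trivial bundle. Beyond that, the argument reduces to the invariance of the Behrend function under automorphisms, so I do not expect any real obstacle.
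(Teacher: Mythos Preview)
Your proposal is correct and is exactly the standard translation-action argument the paper has in mind: the paper's own proof merely cites \cite[Lemma~4.6]{BFHilb} and \cite[Prop.~3.1]{LocalDT}, whose content is precisely the trivialisation via the additive $\A^3$-action together with the intrinsic invariance of the Behrend function under automorphisms. There is nothing to add beyond tidying the sign convention in $t_p^\ast$ versus $t_{-p}^\ast$.
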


\begin{proof}
This follows by standard arguments, see \cite[Lemma 4.6]{BFHilb} or \cite[Prop. 3.1]{LocalDT}.
\end{proof}

\begin{lemma}\label{lemma:chiPn}
One has $\widetilde\chi(\Quot_{\A^3}(\O^r,n)) = (-1)^{rn}\chi(\mathsf P_n) = \chi(\mathsf P_n,\nu_{n})$.
\end{lemma}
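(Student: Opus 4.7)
My plan is to prove the two equalities separately. For the first, $\chi(\mathsf P_n, \nu_n) = \widetilde\chi(\Quot_{\A^3}(\O^r,n))$, I would apply the stratification formula
\[
\widetilde\chi(\Quot_X(\F,n)) = \sum_\alpha |G_\alpha|^{-1}\widetilde\chi(Z_\alpha, Q_\alpha)
\]
derived just above the lemma to $X = \A^3$, $\F = \O^r$. Combining the fibre square \eqref{diag6565} with Lemma~\ref{lemma:punctual} applied factor-wise identifies $Z_\alpha \cong (\A^{3s_\alpha}\setminus\Delta)\times\prod_i \mathsf P_{\alpha_i}$, and shows the restriction of $\nu_{Q_\alpha}$ is the pullback of $\prod_i \nu_{\alpha_i}$ from the second factor. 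By the product property \eqref{ChiProduct},
\[
\widetilde\chi(Z_\alpha, Q_\alpha) = \chi(\A^{3s_\alpha}\setminus\Delta) \cdot \prod_i \chi(\mathsf P_{\alpha_i}, \nu_{\alpha_i}).
\]
A standard induction using the Fadell--Neuwirth fibration $F(\A^3, k) \to F(\A^3, k-1)$, with fibre $\A^3$ minus $k-1$ points of Euler characteristic $2-k$, shows $\chi(\A^{3s_\alpha}\setminus\Delta) = 0$ for $s_\alpha \geq 2$. Hence only $\alpha = (n)$ contributes, giving $\chi(\A^3)\cdot\chi(\mathsf P_n,\nu_n) = \chi(\mathsf P_n, \nu_n)$.

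For the second equality, I would rerun the same stratification argument with the ordinary Euler characteristic in place of $\widetilde\chi$ --- $\chi$ satisfies the analogues of \eqref{ChiSum}--\eqref{ChiEtale} with constant weight $1$ --- which yields $\chi(\Quot_{\A^3}(\O^r,n)) = \chi(\mathsf P_n)$. The Corollary immediately preceding this lemma records $\sum_n \widetilde\chi(\Quot_{\A^3}(\O^r,n)) t^n = \mathsf M((-1)^r t)^r$, and a standard torus-localisation argument (the fixed points for the natural $(\C^*)^3 \times (\C^*)^r$-action on $\Quot_{\A^3}(\O^r,n)$ being $r$-tuples of monomial ideals of total colength $n$) gives $\sum_n \chi(\Quot_{\A^3}(\O^r,n)) t^n = \mathsf M(t)^r$. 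Extracting the coefficient of $t^n$ from the two generating series yields
\[
\widetilde\chi(\Quot_{\A^3}(\O^r,n)) = (-1)^{rn}\chi(\Quot_{\A^3}(\O^r,n)) = (-1)^{rn}\chi(\mathsf P_n),
\]
as required.

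The main technical obstacle, in my view, is verifying that the Behrend function on $Z_\alpha$ pulled back from $Q_\alpha$ agrees with the pullback of the product $\prod_i \nu_{\alpha_i}$ along the projection to $\prod_i \mathsf P_{\alpha_i}$. This relies on the factor-wise identification from Lemma~\ref{lemma:punctual} together with the multiplicativity of Behrend functions under products; once in place, everything else reduces to combinatorics (the vanishing of $\chi$ of the configuration spaces) and the two generating-function identities already at hand.
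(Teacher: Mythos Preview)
Your argument is correct, but it takes a quite different route from the paper's. The paper dispatches both equalities in one stroke by torus localisation: the $(\C^{\times})^3$-action on $\A^3$ lifts to $\Quot_{\A^3}(\O^r,n)$, the fixed points are isolated and all lie in the invariant closed subscheme $\mathsf P_n$, and then \cite[Cor.~3.5]{BFHilb} gives both $\chi(\mathsf P_n,\nu_n)=\widetilde\chi(\Quot_{\A^3}(\O^r,n))$ and the sign identification $(-1)^{rn}\chi(\mathsf P_n)$ simultaneously.

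Your approach instead reuses the surrounding machinery: for the first equality you specialise the stratification identity (already derived just above) to $X=\A^3$, exploiting the vanishing $\chi\bigl((\A^3)^{s_\alpha}\setminus\Delta\bigr)=0$ for $s_\alpha\geq 2$ so that only the $\alpha=(n)$ term survives. This is essentially a preview of Lemma~\ref{lemma:Qalpha} in the special case $X=\A^3$, where Lemma~\ref{lemma:punctual} provides the global trivialisation that for general $X$ is only Zariski-local (Lemma~\ref{lemma:ZLT}). For the second equality you invoke the Corollary at the end of Section~2 (the virtual motive computation) and compare it against the unweighted torus fixed-point count $\mathsf M(t)^r$. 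This is valid and non-circular, since that Corollary rests on the motivic identity and not on anything in Section~3. The trade-off is that your proof imports the virtual motive calculation as a black box, which is a heavier input than the single localisation lemma the paper cites; on the other hand, your argument makes the mechanism behind the first equality more visible and ties it directly to the stratification framework already in place. A minor point: the Corollary you cite is at the end of Section~2, not ``immediately preceding'' this lemma (Lemma~\ref{lemma:punctual} sits in between), but the reference is unambiguous.
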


\begin{proof}
The first identity follows by Equation \eqref{chi_local_quot}, along with the observation that 
\[
\chi(\Quot_{\A^3}(\O^r,n)) = \chi(\mathsf P_n),
\]
which holds because $\mathsf P_n$ is a $\mathbf T$-invariant subscheme containing the $\mathbf T$-fixed locus (cf.~Remark \ref{remark:fixed_loci}).
By \cite[Cor.~3.5]{BFHilb}, we deduce from the first identity that the parity of the tangent space dimension at a $\mathbf T$-fixed point of the Quot scheme is $(-1)^{rn}$. This can also be proved directly by virtual localisation.
The $\mathbf T$-action on $\Quot_{\A^3}(\O^r,n)$ has isolated fixed points by Lemma \ref{lemma:T_fixed_points}, and using again that $\mathsf P_n$ is $\mathbf T$-invariant and contains the $\mathbf T$-fixed locus, we obtain the second identity by another application of \cite[Cor.~3.5]{BFHilb}.
\end{proof}

\begin{lemma}\label{lemma:ZLT}
Let $X$ be a smooth $3$-fold, $\F$ a locally free sheaf of rank $r$, and denote by $\nu_Q$ the Behrend function of $\Quot_X(\F,n)$. Let $\sigma_{(n)}$ be the morphism defined in \eqref{mor:phc}. Then there is a Zariski open cover $(U_i)$ of $X$ such that 
\[
\left(\sigma_{(n)}^{-1}U_i,\nu_{Q}\right) \isom (U_i,\mathbb{1}_{U_i})\times \left(\mathsf P_n,\nu_n\right)
\]
as schemes with constructible functions on them. 
\end{lemma}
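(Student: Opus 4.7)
The plan is to identify both the scheme $\sigma_{(n)}^{-1}(U)$ and the restricted Behrend function as pullbacks from the $\A^3$ situation addressed by Lemma~\ref{lemma:punctual}. First, construct the cover: for each $p\in X$, choose a Zariski open $U\ni p$ on which $\F|_U\isom\O_U^r$ (possible since $\F$ is locally free) and admitting an étale morphism $\phi\colon U\ra\A^3$ (possible since $X$ is smooth of dimension three: pick regular parameters at $p$ and shrink to an open subset on which they define an étale map). Fix one such $U$ with trivialization and chart $\phi$.

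The central claim is that the natural square
\[
\begin{tikzcd}
\sigma_{(n)}^{-1}(U) \arrow{r}{\Phi} \arrow[swap]{d}{\sigma_{(n)}} & \Quot_{\A^3}(\O^r,n)_{(n)} \arrow{d}{} \\
U \arrow{r}{\phi} & \A^3
\end{tikzcd}
\]
is Cartesian, where $\Phi$ sends a $T$-point $\O_{U_T}^r\surj Q$ of the deepest stratum (so $Q$ is flat over $T$ of length $n$ with fiberwise single-point support) to the associated quotient of $\O_{\A^3_T}^r$: one uses that $Q$ is annihilated by some power of the ideal of its support, and étaleness of $\phi$ identifies the relevant infinitesimal neighborhoods in $U$ and $\A^3$, so $Q$ is canonically a quotient of $\O_{\A^3_T}^r$ as well. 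Granting the Cartesian claim, the splitting $\Quot_{\A^3}(\O^r,n)_{(n)}\isom\A^3\times\mathsf P_n$ over $\A^3$ from Lemma~\ref{lemma:punctual} yields
\[
\sigma_{(n)}^{-1}(U)\isom U\times_{\A^3}(\A^3\times\mathsf P_n)\isom U\times\mathsf P_n,
\]
giving the desired product of schemes.

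For the Behrend function, the Zariski-open immersion $\Quot_U(\F|_U,n)\hookrightarrow\Quot_X(\F,n)$ gives $\nu_Q|_{\Quot_U(\F|_U,n)}=\nu_{\Quot_U(\F|_U,n)}$ by étale compatibility of the Behrend function. The Cartesian property together with étaleness of $\phi$ forces $\Phi$ to be étale, hence Behrend functions pull back along $\Phi$. Lemma~\ref{lemma:punctual} identifies $\nu_{\Quot_{\A^3}(\O^r,n)}$ restricted to its deepest stratum with $\mathbb 1_{\A^3}\boxtimes\nu_n$; transporting along $\Phi$ and the isomorphism above yields precisely $\mathbb 1_U\boxtimes\nu_n$, as required.

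The main obstacle is establishing the Cartesian property of the square. One needs a careful functorial argument to promote the punctual identification $\Quot_U(\F|_U,n)_p\isom\mathsf P_n$ (which at each closed point $p\in U$ holds because $\phi$ induces an isomorphism on formal completions) to a family-level statement for arbitrary $T$-valued points, where the fiberwise support cycle can move across $U$ and degenerate. This is essentially the content of the ``standard arguments'' invoked in the proof of Lemma~\ref{lemma:punctual}, and parallels the treatments in \cite[Lemma~4.6]{BFHilb} and \cite[Prop.~3.1]{LocalDT}, adapted to the étale-local setting here.
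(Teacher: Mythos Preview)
Your overall strategy is correct and matches the standard argument the paper cites. There is, however, a genuine gap in the Behrend function step.

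You argue that the Cartesian square of \emph{strata} makes $\Phi$ étale, and then conclude that ``Behrend functions pull back along $\Phi$.'' But the functions in question are not the intrinsic Behrend functions of the strata: $\nu_Q$ is the Behrend function of the \emph{ambient} Quot scheme $\Quot_X(\F,n)$ restricted to the deepest stratum, and likewise $\nu_n$ is the restriction of $\nu_{\Quot_{\A^3}(\O^r,n)}$ to $\mathsf P_n$. An étale map between the strata alone does not force these restricted ambient Behrend functions to correspond. What you need is that $\Phi$ extends to an étale morphism between open neighbourhoods of the deepest strata inside the \emph{full} Quot schemes $\Quot_U(\O_U^r,n)$ and $\Quot_{\A^3}(\O^r,n)$; then étale compatibility of the Behrend function applies to the ambient schemes, and restricting to the strata gives the claim.

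This extension is exactly the content of Proposition~\ref{prop:etaleV} in Appendix~\ref{quotmess}: the étale chart $\phi\colon U\to\A^3$ induces an étale morphism from the open locus of $\Quot_U(\O_U^r,n)$ where $\phi$ is injective on the support (which contains the entire deepest stratum, since a single point maps injectively) to $\Quot_{\A^3}(\O^r,n)$. Invoking this directly closes the gap and in fact subsumes your Cartesian-square argument, since the scheme-level identification of the strata then follows from the étale map of ambient Quot schemes together with Lemma~\ref{lemma:punctual}.
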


\begin{proof}
This is a standard argument. See for instance \cite[Cor.~3.2]{LocalDT}.
\end{proof}

\begin{lemma}\label{lemma:Qalpha}
One has the identity
\[
\widetilde\chi(Z_\alpha,Q_\alpha)
=\chi(X^{s_{\alpha}}\setminus \Delta)\cdot \prod_{i=1}^{s_\alpha}\chi(\mathsf P_{\alpha_i},\nu_{\alpha_i}).
\]
\end{lemma}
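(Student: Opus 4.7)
The plan is to combine three ingredients: (i) the multiplicativity of the Behrend function under Cartesian products, (ii) Lemma \ref{lemma:ZLT} applied to each factor, and (iii) the additivity and multiplicativity properties \eqref{ChiSum}--\eqref{ChiProduct} of the relative virtual Euler characteristic.

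First I would note that for a product of schemes $Y = Y_1 \times \cdots \times Y_s$, the Behrend function factorises as $\nu_Y(y_1,\ldots,y_s) = \prod_i \nu_{Y_i}(y_i)$. Applied to $Q_\alpha = \prod_i \Quot_X(\F,\alpha_i)$ and restricted along the open inclusion $Z_\alpha \hookrightarrow \prod_i \Quot_X(\F,\alpha_i)_{(\alpha_i)} \subset Q_\alpha$, this shows that $\nu_{Q_\alpha}|_{Z_\alpha}$ is the external product, factor by factor, of the Behrend functions of each $\Quot_X(\F,\alpha_i)$ restricted to its deepest stratum.

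Next I would invoke Lemma \ref{lemma:ZLT} for each index $i = 1, \ldots, s_\alpha$, obtaining a Zariski open cover $(U^{(i)}_{j})_j$ of $X$ over which the restricted Quot-to-Chow map $\sigma_{(\alpha_i)}$ trivialises, together with its Behrend function, as $(U^{(i)}_j, \mathbb{1}) \times (\mathsf P_{\alpha_i}, \nu_{\alpha_i})$. Taking products of these covers produces a Zariski open cover of $X^{s_\alpha}$ whose restriction to $X^{s_\alpha} \setminus \Delta$ trivialises the map $\pi_\alpha$ of \eqref{diag6565} with fibre $\prod_i \mathsf P_{\alpha_i}$, and over each trivialising patch the constructible function $\nu_{Q_\alpha}|_{Z_\alpha}$ takes the external product form $\mathbb{1} \times \prod_i \nu_{\alpha_i}$.

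Finally I would stratify $X^{s_\alpha} \setminus \Delta$ into finitely many pairwise disjoint locally closed subsets $W_k$, each contained in a single trivialising patch. On each preimage $\pi_\alpha^{-1}(W_k) \cong W_k \times \prod_i \mathsf P_{\alpha_i}$, the product formula \eqref{ChiProduct} gives
\[
\widetilde\chi\bigl(\pi_\alpha^{-1}(W_k),\, Q_\alpha\bigr) \;=\; \chi(W_k) \cdot \prod_{i=1}^{s_\alpha} \chi(\mathsf P_{\alpha_i}, \nu_{\alpha_i}).
\]
Summing over $k$ via the additivity \eqref{ChiSum}, together with $\chi(X^{s_\alpha} \setminus \Delta) = \sum_k \chi(W_k)$, yields the claimed identity. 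The main delicate step is the compatibility of $\nu_{Q_\alpha}$ with the local product structure of $\pi_\alpha$, which reduces factor by factor to the formulation of Lemma \ref{lemma:ZLT} as an isomorphism of schemes-with-constructible-functions; this is precisely why that lemma is stated in such a strong form.
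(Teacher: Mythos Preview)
Your proposal is correct and follows essentially the same route as the paper: use Lemma~\ref{lemma:ZLT} to produce a Zariski open cover of $X^{s_\alpha}\setminus\Delta$ over which $(\pi_\alpha^{-1}(-),\nu_{Q_\alpha})$ trivialises as a product with constructible function $\mathbb{1}\times\prod_i\nu_{\alpha_i}$, refine the cover to a locally closed stratification, and then apply \eqref{ChiSum} and \eqref{ChiProduct}. You are slightly more explicit than the paper in spelling out that the cover of $X^{s_\alpha}\setminus\Delta$ arises from products of the factorwise covers and that the identification of constructible functions uses the multiplicativity $\nu_{Q_\alpha}=\prod_i\nu_{\Quot_X(\F,\alpha_i)}$, but this is exactly the content the paper packages into its one-line invocation of Lemma~\ref{lemma:ZLT}.
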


\begin{proof}
By Lemma \ref{lemma:ZLT}, we can find a Zariski open cover $X^{s_\alpha}\setminus \Delta = \bigcup_jB_j$ such that 
\be\label{zltFibr}
\left(\pi_\alpha^{-1}B_j,\nu_{Q_\alpha}\right) = 
(B_j,\mathbb 1_{B_j}) \times \left(\prod_i \mathsf P_{\alpha_i},\prod_i \nu_{\alpha_i}\right),
\ee
where $\pi_\alpha\colon Z_\alpha\ra X^{s_\alpha}\setminus \Delta$ is the map that appeared for the first time in \eqref{diag6565}.
Such an open covering can be turned into a locally closed stratification $X^{s_\alpha}\setminus \Delta = \amalg_{\ell} U_\ell$ such that each $U_\ell$ is contained in some $B_j$. Then we have
\begin{align*}
\widetilde\chi(Z_\alpha,Q_\alpha)
&= \sum_\ell \widetilde\chi\left(\pi_\alpha^{-1}U_\ell,Q_\alpha\right) & \textrm{by }\eqref{ChiSum}\\
&= \sum_\ell\chi\bigl(U_\ell \times \prod_i \mathsf P_{\alpha_i},\mathbb 1_{U_\ell}\times \prod_i \nu_{\alpha_i}\bigr) & \textrm{by } \eqref{zltFibr}\\
&= \sum_\ell\chi(U_\ell,\mathbb 1_{U_\ell}) \cdot \prod_i\chi\left(\mathsf P_{\alpha_i},\nu_{\alpha_i}\right) & \textrm{by }\eqref{ChiProduct}\\
&= \chi(X^{s_\alpha}\setminus \Delta) \cdot \prod_i\chi\left(\mathsf P_{\alpha_i},\nu_{\alpha_i}\right),
\end{align*}
and the lemma is proved.
\end{proof}

We are now in a position to finish the computation of $\widetilde{\chi}(\Quot_X(\F,n))$:
\begin{align*}
\widetilde\chi(\Quot_X(\F,n)) &=
\sum_{\alpha}|G_\alpha|^{-1}\chi(X^{s_{\alpha}}\setminus \Delta)\cdot \prod_{i}\chi\left(\mathsf P_{\alpha_i},\nu_{\alpha_i}\right) & \textrm{by Lemma }\ref{lemma:Qalpha} \\
&=\sum_{\alpha}|G_\alpha|^{-1}\chi(X^{s_{\alpha}}\setminus \Delta)\cdot \prod_{i}\,(-1)^{r\alpha_i}\chi\left(\mathsf P_{\alpha_i}\right) & \textrm{by Lemma }\ref{lemma:chiPn}\\
&=(-1)^{rn}\sum_{\alpha}|G_\alpha|^{-1}\chi(X^{s_{\alpha}}\setminus \Delta)\cdot \prod_{i}\chi\left(\mathsf P_{\alpha_i}\right) \\
&=(-1)^{rn}\chi(\Quot_X(\F,n)).
\end{align*}
By virtue of formula \eqref{GKformula} of Gholampour--Kool \cite[Thm.~1.1]{Gholampour2017}, we conclude that for a locally free sheaf $\F$ on a smooth quasi-projective $3$-fold $X$ one has
\begin{equation}\label{eq:Thm_Locally_Free_Case}
\sum_{n\geq 0}\widetilde\chi(\Quot_X(\F,n)) q^n = \mathsf M((-1)^rq)^{r\chi(X)}.
\end{equation}
This completes the proof of Theorem \ref{Thm:LocallyFree}.

\section{Tools for the proof of Theorem \ref{thm2}}
This section contains the technical preliminaries we will need in Section \ref{Sec:dtpt} for the proof of Theorem \ref{thm2}. We mainly follow \cite{Toda2}.
The key objects are the following.
\bitem
\item [(i)] Moduli spaces of torsion free sheaves and of PT pairs on a smooth projective $3$-fold $X$. These can be seen as parametrising stable objects in a suitable heart 
\[
\mathscr A_\mu\subset D^b(X)
\]
of a bounded t-structure on the derived category of $X$.
\item [(ii)] The Hall algebra of the abelian category $\mathscr A_\mu$ and the associated (Behrend weighted, unweighted) integration maps.
\eitem
Throughout, let $X$ be a smooth projective $3$-fold, not necessarily Calabi--Yau.
We fix an ample class $\omega$ on $X$, an integer $r\geq 1$, and a divisor class $D\in H^2(X,\Z)$, satisfying the coprimality condition
\be\label{coprime}
\gcd(r,D\cdot \omega^2) = 1.
\ee

\subsection{Moduli of sheaves and PT pairs}\label{sec:Moduli}
Slope stability with respect to $\omega$ is defined in terms of the slope function $\mu_\omega$, attaching to a torsion free coherent sheaf $E$ the ratio
\[
\mu_\omega(E) = \frac{c_1(E)\cdot \omega^2}{\rk E}\in \Q\cup\set{\infty}.
\]
The coherent sheaf $E$ is $\mu_{\omega}$-stable or slope-stable if the strict inequality
\[
\mu_{\omega}(S) < \mu_{\omega}(E)
\] 
holds for all proper non-trivial subsheaves $0 \neq S \subsetneq E$ with $0 < \rk(S) < \rk(E)$.
The sheaf is slope-semistable if the same condition holds with $<$ replaced by $\leq$.
Note that condition \eqref{coprime} implies that any slope-semistable sheaf is slope-stable and, hence, that the notions of slope and Gieseker stability coincide; see~\cite{modulisheaves}.
In particular, the coarse moduli space
\be\label{MDT}
M_{\DT}(r,D,-\beta,-m)
\ee
of $\mu_\omega$-stable sheaves of Chern character $(r,D,-\beta,-m)$ is a projective scheme \cite{modulisheaves}.

If $X$ is Calabi--Yau, $M_{\DT}(r,D,-\beta,-m)$ carries a symmetric perfect obstruction theory and hence a (zero-dimensional) virtual fundamental class by \cite{ThomasThesis}.
By the main result of \cite{Beh}, the associated DT invariant
\be\label{DThigherrank}
\DDT(r,D,-\beta,-m) = \int_{\left[M_{\DT}(r,D,-\beta,-m)\right]^{\vir}}1 \in \Z,
\ee
coincides with Behrend's virtual Euler characteristic $\widetilde\chi(M_{\DT}(r,D,-\beta,-m))$.

The notion of higher rank PT pair originated in the work of Lo \cite{Lo}, and was revisited by Toda \cite{Toda2}; also see the example \cite[Ex.~3.4]{Toda2}.
The definition is the following.
\begin{definition}[{\cite[Def.~3.1]{Toda2}}]\label{ptpair}
	A \emph{PT pair} on $X$ is a two-term complex $J^\bullet \in D^b(X)$ such that
    \begin{enumerate}
    	\item $H^i(J^\bullet) = 0$ if $i \neq 0,1$,
    	\item $H^0(J^\bullet)$ is $\mu_{\omega}$-(semi)stable and $H^1(J^\bullet)$ is zero-dimensional,
        \item $\Hom(Q[-1],J^\bullet) = 0$ for every zero-dimensional sheaf $Q$.
    \end{enumerate}
We say that a PT pair $J^\bullet$ is $\F$-\emph{local}, or \emph{based at} $\F$, if $H^0(J^\bullet)=\F$.
\end{definition}
By \cite[Thm.~1.2]{Lo}, the coarse moduli space
\be\label{MPT}
M_{\PT}(r,D,-\beta,-m)
\ee
parametrising PT pairs with the indicated Chern character is a proper algebraic space of finite type.
If $X$ is in addition Calabi--Yau, the PT moduli space carries a symmetric perfect obstruction theory by the results of Huybrechts--Thomas \cite{HT}.
The PT invariant is defined, just as \eqref{DThigherrank}, by integration against the associated virtual class.
Similarly to the DT case, by \cite{Beh} it coincides with Behrend's virtual Euler characteristic
\[
\PPT(r,D,-\beta,-m) = \widetilde\chi(M_{\PT}(r,D,-\beta,-m)).
\]

For later purposes, we recall the notion of \emph{family} of PT pairs.
\begin{definition}\label{def:familyPT}
A \emph{family of PT pairs} parametrised by a scheme $B$ is a perfect complex $J^\bullet \in D^b(X\times B)$ such that, for every $b\in B$,
the derived restriction of $J^\bullet$ to $X\times \set{b}\subset X\times B$ is a PT pair.
\end{definition}

Note that no additional flatness requirement is imposed; see Remark~\ref{flatness} for a brief discussion on \emph{families of objects} in a heart of a bounded t-structure on $D^b(X)$.

We define the moduli spaces
\be\label{moduli}
M_{\DT}(r,D),\quad M_{\PT}(r,D)
\ee
of $\mu_\omega$-stable sheaves and PT pairs, respectively, as the (disjoint) unions of the moduli spaces \eqref{MDT} and \eqref{MPT}, over all $(\beta,m)\in H^4(X)\oplus H^6(X)$.
Elements of these moduli spaces will be called DT and PT objects respectively.

\begin{remark}\label{rem:stacks_gerbes}
The moduli \emph{stacks} of DT and PT objects 
\be\label{DTPT_stacks}
\mathcal M_{\DT}(r,D,-\beta,-m),\quad \mathcal M_{\PT}(r,D,-\beta,-m)
\ee
also enter the picture in Toda's proof of the DT/PT correspondence in the key Hall algebra identity \cite[Lemma 3.16]{Toda2} underlying his wall-crossing formula.
By the coprimality assumption \eqref{coprime}, each sheaf or PT pair parametrised by these stacks has automorphism group $\G_m$. Thus, the stacks $\mathcal M_{\DT}(r,D)$ and $\mathcal M_{\PT}(r,D)$, defined as the disjoint unions of the stacks \eqref{DTPT_stacks} over all $(\beta,m)$, are $\G_m$-gerbes over their coarse moduli spaces \eqref{moduli}.
\end{remark}

\subsubsection{A characterisation of PT pairs}
Let $C\subset X$ be a curve, where $X$ is a Calabi--Yau $3$-fold. The formulation of the $C$-local DT/PT correspondence requires $C$ to be Cohen--Macaulay. This is equivalent to the ideal sheaf $\mathscr I_C\subset \O_X$ being both a DT and a PT object.
Similarly, an $\mathcal F$-local (higher rank) DT/PT correspondence requires the sheaf $\mathcal F$ to be both a DT object and a PT object. This assumption will be crucial, for instance, in the proof of Lemma \ref{Closed_Points_13}.

We introduce the following definition, which allows us to recognise a PT pair more easily and to describe the intersection of the spaces \eqref{moduli} of DT and PT objects.

\begin{definition}
A complex $J^\bullet \in D^b(X)$ satisfying properties (1) and (2) in Definition \ref{ptpair} is called a \emph{pre-PT pair}.
\end{definition}

Whether or not a pre-PT pair is a PT pair depends on the vanishing of a particular local Ext sheaf.
To see this, recall that the derived dual $(-)^{\vee} = {\textbf R}\lHom(-,\O_X)$ is a triangulated anti-equivalence of $D^b(X)$ that restricts to an anti-equivalence of abelian categories
\[
(-)^{\vee}\colon \Coh_0(X)\,\widetilde{\ra}\,\Coh_0(X)[-3],\qquad Q\mapsto Q^{\textrm{D}}[-3],
\]
where $Q^{\textrm{D}} = \lExt^3(Q,\O_X)$ is the usual dual of a zero-dimensional sheaf on a $3$-fold.
In particular, we also have an anti-equivalence $\Coh_0(X)[-1]\,\widetilde{\ra}\,\Coh_0(X)[-2]$.
\begin{lemma}\label{lem:PrePT_is_PT}
Let $J^\bullet$ be a pre-PT pair.
Then $J^\bullet$ is a PT pair if and only if $\lExt^2(J^\bullet,\O_X)=0$.
\end{lemma}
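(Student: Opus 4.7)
My plan is to translate condition~(3) of Definition~\ref{ptpair} through the derived dualising functor $(-)^\vee$, which will identify it with the vanishing of a single cohomology sheaf. The first step is purely formal: since $(-)^\vee$ is a contravariant auto-equivalence of $D^b(X)$ and $(Q[-1])^\vee = Q^{\mathrm D}[-2]$ by the identification recalled just above the lemma, one has
\[
\Hom(Q[-1], J^\bullet) \;\cong\; \Hom(J^{\bullet\vee}[2], Q^{\mathrm D})
\]
for every $Q \in \Coh_0(X)$. Because $(-)^{\mathrm D}$ is an involution of $\Coh_0(X)$, condition~(3) is therefore equivalent to $\Hom(J^{\bullet\vee}[2], Q') = 0$ for all $Q' \in \Coh_0(X)$.

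The second step is to control the cohomology sheaves of $J^{\bullet\vee}$. Writing $\mathcal F = H^0(J^\bullet)$ and $Q_0 = H^1(J^\bullet)$, the canonical triangle $\mathcal F \to J^\bullet \to Q_0[-1]$ dualises to
\[
Q_0^{\mathrm D}[-2] \;\to\; J^{\bullet\vee} \;\to\; \mathcal F^\vee,
\]
and the associated long exact sequence, combined with $\lExt^i(Q_0,\O_X)=0$ for $i\neq 3$, yields $\lExt^3(J^\bullet,\O_X)\cong\lExt^3(\mathcal F,\O_X)$; of course $\lExt^{\geq 4}$ vanishes automatically on a smooth $3$-fold. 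Since $\mathcal F$ is torsion free on the smooth $3$-fold $X$, every closed-point stalk has depth at least one, so by Auslander--Buchsbaum its projective dimension is at most two, forcing $\lExt^3(\mathcal F,\O_X)=0$. Thus $J^{\bullet\vee}[2]$ has cohomology concentrated in non-positive degrees, with $H^0(J^{\bullet\vee}[2]) = \lExt^2(J^\bullet,\O_X)$.

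The third and final step is to extract the lemma. Applying $\Hom(-,Q')$ to the truncation triangle $\tau_{\leq -1}(J^{\bullet\vee}[2]) \to J^{\bullet\vee}[2] \to \lExt^2(J^\bullet,\O_X)$ and noting that both $\Hom$ and $\Ext^{-1}$ from the truncated part to a sheaf $Q'$ vanish (by the standard local-to-global spectral sequence, since its cohomology lives in degrees $\leq -1$), one obtains
\[
\Hom(J^{\bullet\vee}[2], Q') \;\cong\; \Hom(\lExt^2(J^\bullet,\O_X), Q').
\]
The proof then concludes with the elementary observation that a coherent sheaf $H$ on $X$ satisfies $\Hom(H,Q')=0$ for all zero-dimensional $Q'$ if and only if $H=0$: any point $p\in\Supp H$ yields by Nakayama a nonzero quotient $H\twoheadrightarrow H\otimes k(p)$ supported at $p$. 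The main obstacle is the homological input $\lExt^3(\mathcal F,\O_X)=0$: without the Auslander--Buchsbaum step, positive-degree cohomology of $J^{\bullet\vee}[2]$ would survive and the clean reduction to a single sheaf would fail.
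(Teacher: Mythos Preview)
Your proof is correct and follows essentially the same approach as the paper: dualise via $(-)^\vee$ to turn $\Hom(Q[-1],J^\bullet)$ into a Hom out of $J^{\bullet\vee}$, then use a truncation triangle to reduce to $\Hom(\lExt^2(J^\bullet,\O_X),Q')$. The only minor differences are cosmetic: you shift by $2$ and invoke Auslander--Buchsbaum explicitly for $\lExt^3(\mathcal F,\O_X)=0$, whereas the paper works unshifted and simply records that $J^{\bullet\vee}$ lives in degrees $[0,2]$; and you use a Nakayama argument for the final step while the paper (having observed $\lExt^2(J^\bullet,\O_X)\in\Coh_0(X)$) can test against $Q'=\lExt^2(J^\bullet,\O_X)$ itself.
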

\begin{proof}
There is a natural truncation triangle
\be\label{triangle}
\tau_{\leq 1}(J^{\bullet \vee})\ra J^{\bullet \vee}\ra
\lExt^2(J^\bullet,\O_X)[-2],
\ee
where $\tau_{\leq 1}(J^{\bullet \vee})$ and $J^{\bullet \vee}$ live in degrees $[0,1]$ and $[0,2]$ respectively.
Note that the rightmost object is a (shifted) zero-dimensional sheaf because $H^0(J^{\bullet})$ is torsion free and $H^1(J^{\bullet})$ is zero-dimensional.
The dualising involution yields
\[
\Hom(Q[-1],J^\bullet) = \Hom(J^{\bullet \vee},Q^{\textrm{D}}[-2])
\]
for every zero-dimensional sheaf $Q \in \Coh_0(X)$.
But we also have
\[
\Hom(J^{\bullet \vee},Q^{\textrm{D}}[-2]) = \Hom(\lExt^2(J^{\bullet},\O_X),Q^{\textrm{D}})
\]
by \eqref{triangle}, since there are no maps from higher to lower degree in $D^b(X)$.
In conclusion, it follows that $J^{\bullet}$ is a PT pair if and only if $\lExt^2(J^{\bullet},\O_X) = 0$ as claimed.
\end{proof}

\begin{example}
In the case of a classical PT pair $I^\bullet=[\O_X\ra F]$, where $F$ is a pure one-dimensional sheaf, the above vanishing can be deduced directly from the existence of the triangle $I^\bullet\ra \O_X\ra F$.
Indeed, it induces an exact sequence
\[
\cdots \to \lExt^2(\O_X,\O_X)\ra \lExt^2(I^\bullet,\O_X)\ra 
\lExt^3(F,\O_X) \to \cdots
\]
in which the outer two terms vanish, the latter by purity of $F$.
\end{example}

The following corollary explains the assumptions on the sheaf  $\mathcal{F}$ made in \cite{Gholampour2017}, and the assumptions required to have an $\F$-local DT/PT correspondence.
\begin{corollary}\label{cor:DTPT_object}
Let $\mathcal F$ be a coherent sheaf on $X$.
Then $\mathcal F$ is both a DT and a PT object if and only if $\mathcal F$ is a $\mu_{\omega}$-stable sheaf of homological dimension at most one.
\end{corollary}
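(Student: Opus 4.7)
The plan is to deduce the corollary directly from Lemma~\ref{lem:PrePT_is_PT} together with the characterisation of homological dimension on a smooth projective variety in terms of the vanishing of the sheaves $\lExt^i(\F,\O_X)$. Viewing the coherent sheaf $\F$ as a complex concentrated in degree zero, conditions (1) and (2) of Definition~\ref{ptpair} reduce respectively to the trivial fact that $\F$ is supported in degree zero and to the requirement that $H^0 = \F$ be $\mu_\omega$-(semi)stable (while $H^1 = 0$ is automatically zero-dimensional). Consequently, such an $\F$ is a pre-PT pair precisely when it is $\mu_\omega$-stable, which is also what the DT condition asks of a sheaf. Thus ``$\F$ is simultaneously a DT and a PT object'' is equivalent to ``$\F$ is a $\mu_\omega$-stable pre-PT pair that moreover satisfies the vanishing produced by Lemma~\ref{lem:PrePT_is_PT}''.

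For the forward direction, I would assume $\F$ is both a DT and a PT object. The DT part forces $\F$ to be $\mu_\omega$-stable and, in particular, torsion free. Applying Lemma~\ref{lem:PrePT_is_PT} to $\F$ viewed as a pre-PT pair, the PT part is equivalent to the vanishing $\lExt^2(\F,\O_X) = 0$. Torsion freeness on a smooth projective $3$-fold implies $\mathrm{depth}\,\F \geq 1$ locally at every closed point, so by Auslander--Buchsbaum one obtains $\lExt^3(\F,\O_X) = 0$ as well. Since on a regular scheme the homological dimension of a coherent sheaf equals the largest $i$ with $\lExt^i(\F,\O_X) \neq 0$, combining the two vanishings yields homological dimension at most one.

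For the reverse direction, I would assume $\F$ is $\mu_\omega$-stable of homological dimension at most one. Stability in the sense of Section~\ref{sec:Moduli} is defined only for torsion free sheaves, so $\F$ is automatically a DT object. As noted above, $\F$ is then a pre-PT pair, and homological dimension at most one gives $\lExt^i(\F,\O_X) = 0$ for all $i \geq 2$. A final application of Lemma~\ref{lem:PrePT_is_PT} promotes this pre-PT pair to a genuine PT pair.

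There is no real obstacle here: the technical work has been absorbed into Lemma~\ref{lem:PrePT_is_PT}, and the only subtle point is the translation between the vanishing $\lExt^i(\F,\O_X) = 0$ for $i \geq 2$ and the condition ``homological dimension at most one'', which uses the Auslander--Buchsbaum formula on the smooth $3$-fold to pin down the top Ext sheaf from torsion freeness.
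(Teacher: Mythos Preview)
Your proof is correct and follows essentially the same route as the paper: both deduce the result from Lemma~\ref{lem:PrePT_is_PT}, using that the DT condition amounts to $\mu_\omega$-stability (hence torsion freeness, whence $\lExt^3(\F,\O_X)=0$) and that Lemma~\ref{lem:PrePT_is_PT} translates the PT condition into $\lExt^2(\F,\O_X)=0$. You spell out the Auslander--Buchsbaum step more explicitly than the paper does, but the argument is the same.
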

\begin{proof}
The coherent sheaf $\F$ is $\mu_{\omega}$-stable if and only if it is a DT object.
It is then torsion free, so $\lExt^{\geq 3}(\mathcal F,\O_X) = 0$.
In addition, $\F$ has homological dimension at most one if and only if $\lExt^{\geq 2}(\mathcal F,\O_X)=0$, but this holds if and only if it is a PT object by Lemma~\ref{lem:PrePT_is_PT}.
\end{proof}

\begin{remark}\label{rmk:vanishingext}
Corollary \ref{cor:DTPT_object} implies that for every smooth projective $3$-fold $X$, for every $\mu_{\omega}$-stable torsion free sheaf $\mathcal F$ of homological dimension at most one, and for every zero-dimensional sheaf $Q$, the only possibly non-vanishing Ext groups between $\mathcal F$ and $Q$ are
\[
\Hom(\mathcal F,Q)\isom \Ext^3(Q,\mathcal F)^\ast,\qquad \Ext^1(\mathcal F,Q)\isom \Ext^2(Q,\mathcal F)^\ast.
\]
This in fact holds without the stability assumption by \cite[Lemma 4.1]{Gholampour2017}.
As a consequence, one has $\hom(\mathcal F,Q)-\ext^1(\mathcal F,Q)=r\cdot \ell(Q)$.
Note that the last relation reads
\begin{equation}
    \ext^1(\F,Q[-1]) - \ext^1(Q[-1],\F) = r \cdot \ell(Q),
\end{equation}
which is reminiscent of the DT/PT wall-crossing.
\end{remark}

\subsection{Hall algebras and integration maps}
We recall from \cite[Sec.~2.5,~3.4]{Toda2} the Hall algebra of a suitable heart $\catA_\mu$ of a bounded t-structure on $D^b(X)$. 
To introduce it, recall that a \emph{torsion pair} \cite{MR1327209} on an abelian category $\catA$ is a pair of full subcategories $(\catT,\catF)$ such that
\begin{enumerate}
    \item for $T \in \catT$ and $F \in \catF$, we have $\Hom(T,F) = 0$,
    \item for every $E \in \catA$, there exists a short exact sequence
        \[
        0 \to T_E \to E \to F_E \to 0
        \]
        in $\catA$ with $T_E \in \catT$ and $F_E \in \catF$. This sequence is unique by (1).
\end{enumerate}

Recall also that we have fixed an ample class $\omega$ on the $3$-fold $X$ and a pair $(r,D)$ satisfying the coprimality condition \eqref{coprime}.
Define the number
\[
\mu = \frac{D\cdot \omega^2}{r}\in \Q.
\]
Toda \cite{Toda2} considers the torsion pair
\[
\Coh(X) = \langle \Coh_{>\mu}(X),\Coh_{\leq\mu}(X) \rangle,
\]
where for an interval $I\subset \R\cup\set{\infty}$, the category $\Coh_I(X)$ is the extension-closure of the $\mu_\omega$-semistable objects $E\in \Coh(X)$ with slope $\mu_\omega(E)\in I$, along with the zero sheaf.
Tilting $\Coh(X)$ at the above torsion pair produces the heart
\[
\mathscr A_\mu = \langle \Coh_{\leq\mu}(X),\Coh_{>\mu}(X)[-1]\rangle
\]
of a bounded t-structure on $D^b(X)$.
We summarise the required properties of $\catA_{\mu}$.
\begin{lemma}
The category $\catA_{\mu}$ satisfies the following properties.
\begin{enumerate}
    \item An object $E \in \catA_{\mu}$ is a two-term complex in $D^b(X)$ such that
    \[
    H^0(E) \in \Coh_{\leq \mu}(X), \quad H^1(E) \in \Coh_{> \mu}(X), \quad \text{and } H^i(E) = 0 \text{ otherwise}.
    \]
    \item $\catA_{\mu}$ is a noetherian and abelian category,
    \item $\catA_{\mu}$ contains the noetherian and abelian full subcategory
    \[
    \catB_\mu = \langle \Coh_{\mu}(X),\Coh_{\leq 1}(X)[-1]\rangle \subset \catA_\mu,
    \]
    where $\Coh_{\leq 1}(X)$ consists of sheaves with support of dimension at most one,
    \item the category $\Coh_{\leq 1}(X) \subset \catA_{\mu}$ is closed under subobjects, extensions, and quotients.
\end{enumerate}
\end{lemma}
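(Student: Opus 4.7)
The strategy is to apply the standard Happel--Reiten--Smal\o{} machinery of tilting an abelian category at a torsion pair, after verifying the torsion pair structure in $\Coh(X)$. Concretely, I would first check that $(\Coh_{>\mu}(X),\Coh_{\leq\mu}(X))$ is a torsion pair on $\Coh(X)$. The Hom-vanishing follows from the elementary fact that a nonzero morphism between $\mu_\omega$-semistable sheaves cannot strictly decrease slope, so no nonzero morphism exists from $\Coh_{>\mu}(X)$ to $\Coh_{\leq\mu}(X)$. The short exact sequence required for each $E\in\Coh(X)$ is produced by cutting the Harder--Narasimhan filtration of $E$ at the slope $\mu$, which is well-defined since $\Coh(X)$ is noetherian. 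With this torsion pair in hand, part~(1) and the abelianness assertion of part~(2) follow directly from the theorem of Happel--Reiten--Smal\o{}.

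The main obstacle is noetherianity of $\catA_\mu$, since tilted hearts of noetherian categories are not automatically noetherian. I would follow Toda's approach in \cite{Toda2}: given an ascending chain $E_1\subset E_2\subset\cdots$ in $\catA_\mu$, the cohomology long exact sequence for the standard $t$-structure on $D^b(X)$ produces an ascending chain in $\Coh_{\leq\mu}(X)$ at $H^0$ and, dually, a descending chain of quotients in $\Coh_{>\mu}(X)$ at $H^1$. Noetherianity of $\Coh(X)$, together with the fact that $\Coh_{>\mu}(X)$ is closed under quotients in $\Coh(X)$, forces both chains to stabilise, after which a straightforward five-lemma argument yields stabilisation of the original chain.

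For part~(3), one has $\Coh_{\mu}(X)\subset\Coh_{\leq\mu}(X)\subset\catA_\mu$ tautologically, while torsion sheaves have slope $+\infty$, so $\Coh_{\leq 1}(X)\subset\Coh_{>\mu}(X)$ and hence $\Coh_{\leq 1}(X)[-1]\subset\Coh_{>\mu}(X)[-1]\subset\catA_\mu$. Since $\catA_\mu$ is extension-closed in $D^b(X)$, the extension-closure $\catB_\mu$ sits inside $\catA_\mu$. The abelianness and noetherianity of $\catB_\mu$ then follow from an analogous long-exact-sequence argument applied within the narrower slope range, together with the explicit description of $\catB_\mu$-objects as two-term complexes with $H^0\in\Coh_\mu(X)$ and $H^1\in\Coh_{\leq 1}(X)$.

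Finally, for part~(4), consider a short exact sequence $0\to E'\to F[-1]\to E''\to 0$ in $\catA_\mu$ with $F\in\Coh_{\leq 1}(X)$. Passing to the long exact sequence of cohomology sheaves yields
\[
0\to H^0(E')\to 0 \to H^0(E'')\to H^1(E')\to F\to H^1(E'')\to 0.
\]
Thus $H^0(E')=0$, and $H^0(E'')\in\Coh_{\leq\mu}(X)$ embeds into $H^1(E')\in\Coh_{>\mu}(X)$; since these two subcategories of $\Coh(X)$ intersect only in zero by Harder--Narasimhan theory, $H^0(E'')=0$. Then $H^1(E')\subset F$ is a subsheaf and $H^1(E'')$ is a quotient of $F$, both of which lie in $\Coh_{\leq 1}(X)$ since this class is closed under subsheaves and quotients in $\Coh(X)$. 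Hence $E',E''\in\Coh_{\leq 1}(X)[-1]$. The extension case is analogous: if $E',E''\in\Coh_{\leq 1}(X)[-1]$, the long exact sequence exhibits $H^0(E)=0$ and $H^1(E)$ as an extension of $H^1(E'')$ by $H^1(E')$ in $\Coh(X)$, which remains in $\Coh_{\leq 1}(X)$ by extension-closure.
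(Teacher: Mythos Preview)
The paper does not actually prove this lemma; it is stated there as a summary of properties established in Toda's work \cite{Toda2}, with no argument given. Your proposal therefore supplies substantially more than the paper does, and the overall strategy (verify the torsion pair, invoke Happel--Reiten--Smal\o{}, then argue noetherianity via the standard cohomology long exact sequence, and handle (3)--(4) by direct computation) is the standard and correct one.

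There is, however, a genuine gap in your argument for part~(4). You write that $H^0(E'')\in\Coh_{\leq\mu}(X)$ embeds into $H^1(E')\in\Coh_{>\mu}(X)$, and conclude $H^0(E'')=0$ ``since these two subcategories of $\Coh(X)$ intersect only in zero.'' This is a non sequitur: while $\Coh_{\leq\mu}(X)\cap\Coh_{>\mu}(X)=0$, the torsion class $\Coh_{>\mu}(X)$ is \emph{not} closed under subobjects in $\Coh(X)$, so an embedding $H^0(E'')\hookrightarrow H^1(E')$ does not place $H^0(E'')$ in $\Coh_{>\mu}(X)$. (Already on a curve one has $\O(-1)\hookrightarrow\O(1)$ with the source in $\Coh_{\leq 0}$ and the target in $\Coh_{>0}$.) The fix is easy and uses information you have but did not invoke at this step: your long exact sequence also shows $H^1(E')\hookrightarrow F$, so $H^1(E')\in\Coh_{\leq 1}(X)$, and hence $H^0(E'')$ is a torsion sheaf. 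Since every nonzero object of $\Coh_{\leq\mu}(X)$ is torsion-free (all Harder--Narasimhan factors have finite slope $\leq\mu$), this forces $H^0(E'')=0$. With this correction the rest of your argument for (4) goes through.
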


Let $\mathscr M_X$ be Lieblich's moduli stack of semi-Schur objects on $X$; it is an algebraic stack locally of finite type \cite{Lieblich1}. Recall that a complex $E\in D^b(X)$ is a semi-Schur object if $\Ext^i(E,E) = 0$ for all $i < 0$ (see loc.~cit.~for more details).
Because $\mathscr A_\mu$ is the heart of a bounded t-structure, its objects have no negative self-extensions.
Consider the substack of $\mathscr M_X$ parametrising objects in $\catA_{\mu}$.
It defines an open substack of $\mathscr M_X$ (for example by \cite[Prop.~4.11]{MR3905133}) which we abusively\footnote{If $\mathscr C$ is a subcategory of $D^b(X)$ whose objects have no negative self-extensions, we abuse notation and denote the corresponding moduli stack, a substack of $\mathscr M_X$, by $\mathscr C$ if it exists.} still denote by $\catA_{\mu}$. The category $\mathscr B_\mu$ defines, in turn, an open substack of $\mathscr A_\mu$; this follows from the arguments and results in~\cite[App.~A]{MR2998828}, in particular~\cite[Thm.~A.8]{MR2998828}. It follows that both $\mathscr{B}_{\mu}$ and $\mathscr{A}_{\mu}$ are algebraic stacks locally of finite type. The moduli stacks \eqref{DTPT_stacks} both carry an open immersion to $\mathscr B_\mu$, cf.~\cite[Remark 3.8]{Toda2}.

\begin{remark}\label{flatness}
Let $B$ be a base scheme.
Since $X$ is smooth and $\catA_{\mu} \subset \mathscr M_X$ is open, a $B$-valued point of $\catA_{\mu}$ is a perfect complex $E$ on $X \times B$ such that $E_b = \textbf{L} i_b^*(E) \in \catA_{\mu}$ for every closed point $b \in B$. (Here $i_b\colon X\times \set{b} \into X\times B$ is the natural inclusion.)
Note that no further flatness assumption is required.

Indeed, Bridgeland shows in \cite[Lemma~4.3]{Bri2} that a complex $E \in D^b(X \times B)$ is a (shifted) sheaf \emph{flat over $B$} if and only if each \emph{derived} fibre $E_b \in \Coh(X)$ is a sheaf.
In other words, flatness of $E$ over $B$ is encoded in the vanishing $H^i(E_b) = 0$ for all $i < 0$.
Thus the right notion of a $B$-family of objects in $\catA_{\mu}$ is given by a perfect complex $E$ on $X \times B$ such that $H^i_{\catA_{\mu}}(E_b) = 0$ for all $i \neq 0$ and for all closed points $b \in B$. This means precisely that $E_b \in \catA_{\mu}$.
\end{remark}

\subsubsection{Motivic Hall algebra}
We recall the definition of the motivic Hall algebra from \cite{Bri-Hall}.
Let $S$ be an algebraic stack that is \emph{locally} of finite type with affine geometric stabilisers.
A (representable) morphism of stacks is a \emph{geometric bijection} if it induces an equivalence on $\C$-valued points. It is a \emph{Zariski fibration} if its pullback to any scheme is a Zariski fibration of schemes.
\begin{definition}\label{def:Relative_Grothendieck_Group_of_Stacks}
The $S$-relative Grothendieck group of stacks is the $\Q$-vector space $K(\St/S)$ generated by symbols $[T \to S]$, where $T$ is a \emph{finite type} algebraic stack over $\C$ with affine geometric stabilisers, modulo the following relations.
	\begin{enumerate}
		\item For every pair of $S$-stacks $f_1, f_2 \in \St/S$, we have 
			\begin{equation*}
				[T_1 \sqcup T_2 \xrightarrow{f_1 \sqcup f_2} S] =
					[T_1 \xrightarrow{f_1} S] + [T_2 \xrightarrow{f_2} S].
			\end{equation*}
		\item For every geometric bijection $T_1 \to T_2$ of $S$-stacks, we have
			\begin{equation*}
				[T_1 \xrightarrow{f_1} S] = [T_2 \xrightarrow{f_2} S].
			\end{equation*}
		\item For every pair of Zariski locally trivial fibrations $f_i \colon T_i \to Y$
			with the same fibres and every morphism $g \colon Y \to S$, we have
			\begin{equation*}
				[T_1 \xrightarrow{g \circ f_1} S] = [T_2 \xrightarrow{g \circ f_2} S].
			\end{equation*}
	\end{enumerate}
\end{definition}
\begin{remark}
The last relation plays no further role in this paper. 
\end{remark}
  
As a $\Q$-vector space, the \emph{motivic Hall algebra} of $\catA_{\mu}$ is 
\[
H(\catA_{\mu}) = K(\St/\catA_{\mu}).
\]
We now define the product $\star$ on $H(\mathscr A_\mu)$.
Let $\mathscr A_\mu^{(2)}$ be the stack of short exact sequences in 
$\mathscr A_\mu$, and let 
\[
p_i\colon \mathscr A_\mu^{(2)}\ra \mathscr A_\mu,\qquad i=1,2,3,
\]
be the $1$-morphism sending a short exact sequence $0\ra E_1\ra E_3\ra E_2\ra 0$ to $E_i$.

The following is shown in \cite[App.~B]{BCR} for every heart of a bounded t-structure on $D^b(X)$ whose moduli stack is open in $\mathscr M_X$, where $X$ is any smooth projective variety.
\begin{prop}
The stack $\catA_{\mu}^{(2)}$ is an algebraic stack that is locally of finite type over $\C$.
The morphism $(p_1,p_2) \colon \catA_{\mu}^{(2)} \to \catA_{\mu} \times \catA_{\mu}$ is of finite type.
\end{prop}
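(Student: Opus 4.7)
The plan is to identify $\catA_{\mu}^{(2)}$ with an open substack of a stack of morphisms in $D^b(X)$ built on Lieblich's stack $\mathscr M_X$, and then read off finite type of $(p_1,p_2)$ from coherence of relative $\Ext^1$.

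First, since $\catA_{\mu}$ is the heart of a bounded t-structure on $D^b(X)$, a $B$-family of short exact sequences $0\to E_1\to E_3\to E_2\to 0$ in $\catA_{\mu}$ amounts, in the sense of Remark~\ref{flatness}, to a morphism $f\colon E_1\to E_3$ in the derived category of $X\times B$ between two $B$-families in $\catA_{\mu}$ whose cone fibrewise lies in $\catA_{\mu}$; the cone then realises $E_2$. This reduces the problem to moduli of morphisms in $D^b(X)$.

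Second, I would construct a stack of arrows $\mathscr H\to \mathscr M_X\times \mathscr M_X$, whose $B$-points are triples $(E_1,E_3,f)$ with $f\in \Hom_{X\times B}(E_1,E_3)$. Because the universal objects over $\mathscr M_X$ are perfect, the complex $\mathbf{R}\lHom(E_1,E_3)$ is perfect on $X\times (\mathscr M_X\times \mathscr M_X)$, and the linear stack associated to its derived pushforward represents $\mathscr H$; combined with Lieblich's theorem, $\mathscr H$ is algebraic and locally of finite type over $\C$. Rendering the cone strict via a dg-enhancement of $D^b(X)$, we obtain a morphism of stacks $c\colon \mathscr H\to \mathscr M_X$ with $c(E_1,E_3,f)=\Cone(f)$. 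The conditions $E_1,E_3\in \catA_{\mu}$ and $c(E_1,E_3,f)\in \catA_{\mu}$ are open (since $\catA_{\mu}\subset \mathscr M_X$ is open by hypothesis), and they cut out precisely $\catA_{\mu}^{(2)}$. Hence $\catA_{\mu}^{(2)}$ is open in $\mathscr H$, therefore algebraic and locally of finite type.

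For the finite-type claim of $(p_1,p_2)$, I would pull back along a finite-type test scheme $T\to \catA_{\mu}\times \catA_{\mu}$ classifying a pair $(E_1,E_2)$ of $T$-families. The fibre product is the stack of extensions of $E_2$ by $E_1$ parametrised by $T$, represented by the linear (abelian cone) stack of the complex $\mathbf{R}\pi_*\mathbf{R}\lHom(E_2,E_1)[1]$ on $T$, where $\pi\colon X\times T\to T$ is the projection. Since $\pi$ is smooth and projective and $E_1,E_2$ are perfect, this complex is perfect with coherent cohomology, so the linear stack is of finite type over $T$; this yields finite type of $(p_1,p_2)$. The main obstacle is making the cone morphism $c$ rigorous: in a triangulated category cones are only determined up to non-canonical isomorphism, so one must pass to a dg- or $\infty$-categorical enhancement of $D^b(X)$ to upgrade cone formation to an honest functor, which is the technical heart of the argument in~\cite[App.~B]{BCR}.
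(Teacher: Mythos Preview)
Your sketch is along the right lines and captures the essential ingredients: openness of $\catA_{\mu}$ in Lieblich's stack $\mathscr M_X$, the construction of a stack of arrows as an abelian cone stack over $\mathscr M_X\times\mathscr M_X$ via the perfect complex $\mathbf R\pi_*\mathbf R\lHom(E_1,E_3)$, the use of a dg-enhancement to make the cone functorial, and the identification of the fibres of $(p_1,p_2)$ with linear stacks attached to a perfect complex on the base. These are indeed the steps carried out in \cite[App.~B]{BCR}.

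However, you should be aware that the paper itself does not give a proof of this proposition at all: it simply records the statement and cites \cite[App.~B]{BCR}, noting that the result holds for any heart of a bounded t-structure on $D^b(X)$ whose moduli stack is open in $\mathscr M_X$, where $X$ is any smooth projective variety. So there is no ``paper's own proof'' to compare against beyond the reference. What you have written is effectively a summary of the argument one finds in that appendix, and as such it is an appropriate expansion of the citation rather than an alternative route.
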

Given two $1$-morphisms $f_i\colon \mathscr X_i\ra \mathscr A_\mu$, consider the diagram of stacks
\be\label{HAproduct}
\begin{tikzcd}
\mathscr X_1\star \mathscr X_2\MySymb{dr} \arrow{r}{f}\arrow{d} &
\mathscr A_\mu^{(2)}\arrow{r}{p_3}\arrow{d}{(p_1,p_2)} & 
\mathscr A_\mu \\
\mathscr X_1\times \mathscr X_2\arrow[swap]{r}{f_1\times f_2} & 
\mathscr A_\mu\times \mathscr A_\mu &
\end{tikzcd}
\ee
where the square is cartesian.
Then one defines
\[
\left[f_1\colon \mathscr X_1\ra \mathscr A_\mu\right]\star \left[f_2\colon\mathscr X_2\ra \mathscr A_\mu\right] = 
\left[p_3\circ f\colon \mathscr X_1\star \mathscr X_2\ra \mathscr A_\mu\right].
\]
As a consequence of the previous proposition, the stack $\mathscr X_1 \star \mathscr X_2$ is algebraic and of finite type.
It also has affine geometric stabilisers, and thus defines an element of $H(\catA_{\mu})$.
The unit is given by the class $1 = [\Spec\C \to \catA_{\mu}]$ corresponding to the zero object $0 \in \catA_{\mu}$.
\begin{theorem}
The triple $(H(\catA_{\mu}),\star,1)$ defines a unital associative algebra.
\end{theorem}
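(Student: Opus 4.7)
The plan is to follow Bridgeland's template for motivic Hall algebras \cite{Bri-Hall}, adapted to our tilted heart $\catA_\mu$. The argument splits naturally into three parts: verifying that $\star$ descends to the quotient $K(\St/\catA_\mu)$, checking associativity, and confirming the two-sided unit axiom.

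For well-definedness, I would show that replacing one of the arguments by a representative differing by a relation (1), (2), or (3) in Definition \ref{def:Relative_Grothendieck_Group_of_Stacks} produces the same relation in the product. This reduces to stability of disjoint unions, geometric bijections, and Zariski fibrations under base change along $f_1\times f_2$ in diagram \eqref{HAproduct}, combined with representability of $p_3$. It also requires verifying that $\mathscr X_1 \star \mathscr X_2$ lies again in $\St/\catA_\mu$: it is of finite type because $(p_1,p_2)$ is of finite type by the proposition preceding the statement, and its stabilisers remain affine because $\catA_\mu^{(2)}$ is an open substack of Lieblich's stack $\mathscr M_X$, whose stabilisers are affine.

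For associativity, I would introduce the stack $\catA_\mu^{(3)}$ parametrising length-two filtrations $0\subset E_1 \subset E_{12}\subset E_{123}$ in $\catA_\mu$, with projections $q_i\colon \catA_\mu^{(3)}\to \catA_\mu$ recording successive subquotients and a morphism $q_{123}\colon \catA_\mu^{(3)}\to \catA_\mu$ recording the total object. A Waldhausen-style calculation identifies both iterated products $(\mathscr X_1\star \mathscr X_2)\star \mathscr X_3$ and $\mathscr X_1\star(\mathscr X_2\star \mathscr X_3)$ with the fibre product of $\mathscr X_1\times \mathscr X_2\times \mathscr X_3$ and $\catA_\mu^{(3)}$ over $\catA_\mu^3$ via $(q_1,q_2,q_3)$, equipped with the structural morphism $q_{123}$ to $\catA_\mu$. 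The two identifications agree up to canonical geometric bijection, so relation (2) of Definition \ref{def:Relative_Grothendieck_Group_of_Stacks} forces the two iterated products to coincide in $H(\catA_\mu)$. For the unit axiom, the substack of $\catA_\mu^{(2)}$ parametrising sequences of the form $0\to 0\to E\to E\to 0$ is a geometric bijection onto $\catA_\mu$; base-changing \eqref{HAproduct} along $\Spec\C \times \mathscr X$ then yields a geometric bijection $\mathscr X \xrightarrow{\sim} 1\star \mathscr X$ over $\catA_\mu$, and symmetrically $\mathscr X\star 1\cong \mathscr X$.

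The principal technical obstacle is the verification that the relevant stacks of filtrations, in particular $\catA_\mu^{(3)}$, are algebraic, locally of finite type, with finite-type structural projections to $\catA_\mu^3$. This is not automatic, since $\catA_\mu$ is not a naive category of sheaves but the heart of a tilted $t$-structure; however, it reduces to the analogous statement for $\catA_\mu^{(2)}$ supplied by the proposition preceding the statement, applied to either of the two fibre-product presentations of $\catA_\mu^{(3)}$ as extensions of extensions. Alternatively, one invokes the general machinery of \cite{BCR} directly. Once these geometric inputs are in hand, the identities in $H(\catA_\mu)$ are essentially formal consequences of the universal property of the stack of short exact sequences.
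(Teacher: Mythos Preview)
Your approach is correct and matches the paper's: the paper simply asserts that the proof of \cite[Thm.~4.3]{Bri-Hall} goes through without change, and your outline is precisely a sketch of that argument adapted to $\catA_\mu$. One small slip: $\catA_\mu^{(2)}$ is not an open substack of Lieblich's stack $\mathscr M_X$ (it parametrises short exact sequences, not single objects), so the affine-stabiliser claim for $\mathscr X_1\star\mathscr X_2$ needs a different justification---it follows instead from representability of $(p_1,p_2)$ together with the fact that $\catA_\mu\times\catA_\mu$ has affine stabilisers.
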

\begin{proof}
The proof of \cite[Thm.~4.3]{Bri-Hall} goes through without change.
\end{proof}
Let $\Gamma\subset H^\ast(X,\mathbb Q)$ be the image of the Chern character map.
It is a finitely generated free abelian group.
The stack $\catA_{\mu}$ decomposes as a disjoint union into open and closed substacks 
\[
\catA_{\mu} = \coprod_{\gamma \in \Gamma} \catA_{\mu}^{(\gamma)}
\]
where $\catA_{\mu}^{(\gamma)}$ is the substack of objects of Chern character $\gamma$.
The Hall algebra is $\Gamma$-graded
\[
H(\catA_{\mu}) = \bigoplus_{\gamma \in \Gamma} H_{\gamma}(\catA_{\mu})
\]
where $H_{\gamma}(\catA_{\mu})$ is spanned by classes of maps $[\mathscr X \to \catA_{\mu}]$ that factor through $\catA_{\mu}^{(\gamma)} \subset \catA_{\mu}$.

\subsubsection{Integration morphism}
For simplicity, we denote a symbol $[T \to \Spec\C]$ in the Grothendieck group $K(\St/\C)$ by $[T]$.
This group has a natural commutative ring structure induced by the fibre product of stacks $[T] \cdot [U] = [T \times_{\C} U]$.
In turn, the Hall algebra has a natural structure of $K(\St/\C)$-module where the action is given by
\[
[T] \cdot [U \xrightarrow{f} \catA_{\mu}] = [T \times U \xrightarrow{f \circ \text{pr}_2} \catA_{\mu}].
\]
The ring $K(\St/\C)$ is obtained from the classical Grothendieck ring of varieties $K(\Var/\C)$ by localising at the classes of special algebraic groups \cite[Lemma 3.8]{Bri-Hall}.
There is a subring
\[
\Lambda = K(\Var/\C)[\L^{-1},(1+\L+\cdots+\L^k)^{-1} \colon k \geq 1] \subset K(\St/\C),
\]
where $\L = [\A^1]$ is the Lefschetz motive.
Note that $\Lambda$ does not contain $[B\C^{\ast}] = (\L-1)^{-1}$.

By \cite[Thm.~5.1]{Bri-Hall}, the $\Lambda$-submodule\footnote{The classes $1+\L+\cdots+\L^k$ of the projective spaces $\P^k$ have to be inverted in the definition of the regular Hall subalgebra; see the corrected version \cite{Bri} available on the arXiv.} 
\[
H^{\reg}(\catA_{\mu})\subset H(\catA_{\mu})
\]
generated by \emph{regular} elements is closed under the $\star$ product, where a regular element is an element in the span of the classes $[f \colon T \to \catA_{\mu}]$ where $T$ is a variety.
Moreover, the quotient 
\[
H^{\sc}(\mathscr A_\mu) = H^{\reg}(\mathscr A_\mu)/(\L-1)\cdot H^{\reg}(\mathscr A_\mu)
\]
of the regular subalgebra $H^{\reg}(\mathscr A_\mu)\subset H(\mathscr A_\mu)$ by the ideal generated by $\L-1$ is a commutative algebra called the \emph{semi-classical limit}.
It is equipped with an induced Poisson bracket
\begin{equation}\label{eq:Poisson_Bracket}
\Set{f,g} = \frac{f\star g-g\star f}{\L-1}
\end{equation}
with respect to $\star$, where $f,g \in H^{\sc}(\catA_{\mu})$.
The semi-classical limit is the domain of the integration map.

The codomain of the integration map is the \emph{Poisson torus}.
There are two versions depending on the choice of a sign $\sigma \in \{\pm 1\}$.
Either is defined as the $\Q$-vector space
\[
C_{\sigma}(X) = \bigoplus_{\gamma \in \Gamma} \Q\cdot c_{\gamma}
\]
generated by elements $\Set{c_{\gamma} \mid \gamma \in \Gamma}$ and equipped with a product given by the rule
\[
c_{\gamma_1}\star c_{\gamma_2} = \sigma^{\chi(\gamma_1,\gamma_2)}\cdot c_{\gamma_1+\gamma_2}.
\]
The Poisson torus is a Poisson algebra with respect to the bracket
\[
\Set{c_{\gamma_1},c_{\gamma_2}} = \sigma^{\chi(\gamma_1,\gamma_2)}\chi(\gamma_1,\gamma_2) \cdot c_{\gamma_1+\gamma_2}.
\]

\begin{remark}
Note that the product is commutative when $\sigma = +1$.
If $X$ is Calabi--Yau the Euler pairing is anti-symmetric, so the product is also commutative when $\sigma = -1$.
\end{remark}
There are two integration morphisms $H^{\sc}(\mathscr A_\mu)\ra C_{\sigma}(X)$ defined by 
\be\label{integrationmaps}
\left[f\colon Y\ra \mathscr A_\mu\right] \,\,\mapsto\,\,
\begin{cases}
\chi(Y) \cdot c_{\gamma}, & \textrm{if }\sigma = 1 \\
\chi(Y,f^\ast \nu) \cdot c_{\gamma}, & \textrm{if }\sigma = -1,
\end{cases}
\ee
whenever $f$ factors through the open and closed substack $\catA_{\mu}^{(\gamma)}\subset \catA_{\mu}$ for some $\gamma \in \Gamma$, and where $\nu$ denotes the Behrend weight on $\mathscr A_\mu$.
For simplicity, we denote these group homomorphisms by $I^E$ (when $\sigma=1$) and $I^B$ (when $\sigma=-1$) for Euler and Behrend respectively.

\begin{remark}\label{rem:Integration_Morphisms}
The result \cite[Thm.~5.2]{Bri-Hall} gives conditions which guarantee that $I^E$ and $I^B$ are morphisms of commutative or Poisson algebras.
We summarise these here.
\benum
\item If $X$ is any smooth projective $3$-fold, then $I^E$ is a morphism of commutative algebras.
It is not a morphism of Poisson algebras in general.
However, if $[f \colon Y \to \catA_{\mu}] \in H^{\sc}(\catA_{\mu})$ factors through $\catA_{\mu}^{(\gamma_0)}$ where $\gamma_0 \in \Gamma$ is the Chern character of a zero-dimensional object, then
\[
I^E\left(\{f,g\}\right) = \left\{I^E(f),I^E(g)\right\}
\]
holds for all $g \in H^{\sc}(\catA_{\mu})$.
This follows from the proof of \cite[Thm.~5.2]{Bri-Hall}, because Serre duality between a zero-dimensional object $Z$ and any object $E$ in $D^b(X)$ reduces to the Calabi--Yau condition $\Ext^i(Z,E) \cong \Ext^{3-i}(E,Z)$ for all $i \in \Z$.
\item If $X$ is Calabi--Yau, then both $I^E$ and $I^B$ are morphism of Poisson algebras.
\eenum
\end{remark}

Finally, we need a completed version of both the Hall algebra and the Poisson torus.
Toda constructs in \cite[Sec.~3.4]{Toda2} a completed Hall algebra
\[
\widehat H_{\#}(\mathscr A_{\mu}) = \prod_{\gamma \in \Gamma_{\#}} H_{\gamma}(\mathscr A_{\mu}) = \prod_{\gamma \in \Gamma_{\#}}H_{\gamma}(\Coh_{\leq 1}(X)[-1]),
\]
where $\Gamma_{\#} = \set{(0,0,-\beta',-n') \in \Gamma \mid \beta' \geq 0, n' \geq 0}$ can be seen as the set of Chern characters of objects in $\mathscr A_\mu$ belonging to $\Coh_{\leq 1}(X)[-1]$, cf.~\cite[Remark 3.11]{Toda2}.
Moreover, for each $(r,D)$ satisfying \eqref{coprime}, he constructs a bimodule over this algebra,
\[
\widehat H_{r,D}(\mathscr A_\mu) = \prod_{\gamma \in \Gamma_{r,D}}H_{\gamma}(\mathscr A_\mu),
\]
where $\Gamma_{r,D} \subset \Gamma$ is a suitably bounded subset\footnote{See \cite[Lemma~3.9]{Toda2}, which implies that the product and bracket extend to these completions.} of admissible Chern characters of the form $(r,D,-\beta,-n)$. 
There is a corresponding $\Lambda$-submodule of regular elements
\[
\widehat H_{r,D}^{\reg}(\mathscr A_\mu)\subset \widehat H_{r,D}(\mathscr A_\mu).
\]
The semi-classical limit $\widehat H_{r,D}^{\sc}(\mathscr A_\mu)$ is defined as the quotient of $\widehat H_{r,D}^{\reg}(\mathscr A_\mu)$ by the submodule generated by $\L-1$.
These are bimodules over the corresponding algebras $\widehat H_{\#}^{\sc}(\mathscr A_\mu)$ and $\widehat H_{\#}^{\reg}(\mathscr A_\mu)$ with respect to $\star$-multiplication on the left and on the right.

There are (Behrend weighted and unweighted) integration morphisms induced by \eqref{integrationmaps},
\begin{equation}\label{intmap}
\begin{split}
I_{r,D} &\colon \widehat H_{r,D}^{\sc}(\catA_{\mu}) \to \widehat{C}_{r,D}(\catA_{\mu}) = \prod_{\gamma \in \Gamma_{r,D}}C_{\gamma}(X), \\
I_{\#} &\colon \widehat H_{\#}^{\sc}(\catA_{\mu}) \to \widehat{C}_{\#}(\catA_{\mu}) = \prod_{\gamma \in \Gamma_{\#}} C_{\gamma}(X),
\end{split}
\end{equation}
to the completed Poisson tori.
These are compatible with the Poisson-bimodule structure of $\widehat H_{r,D}^{\sc}(\catA_{\mu})$ over $\widehat H_{\#}^{\sc}(\catA_{\mu})$ and $\widehat{C}_{r,D}(\catA_{\mu})$ over $\widehat{C}_{\#}(\catA_{\mu})$ in the following sense:
\begin{equation}\label{eq:Poisson_bimodule_morphism}
I_{r,D}\left(\{a,x\}\right) = \left\{ I_{\#}(a),I_{r,D}(x) \right\}
\end{equation}
for all $a \in \widehat H_{\#}^{\sc}(\catA_{\mu})$ and $x \in \widehat H_{r,D}^{\sc}(\catA_{\mu})$, where $\{a,-\} \colon \widehat H_{r,D}^{\sc}(\catA_{\mu}) \to \widehat H_{r,D}^{\sc}(\catA_{\mu})$.

In short, $I_{r,D}$ is a Poisson-bimodule morphism over $\widehat H_{\#}^{\sc}(\catA_{\mu})$ whereas $I_{\#}$ is a morphism of Poisson algebras.
We use \eqref{intmap} in Section \ref{DTPTcorrespondence} to obtain the $\mathcal F$-local DT/PT correspondence (Theorem \ref{thm:DTPT}) and to reprove equation~\eqref{GKformula} for stable sheaves (Theorem \ref{thm:gk}).

\section{The higher rank local DT/PT correspondence}\label{Sec:dtpt}
Let $X$ be a smooth projective $3$-fold and let $\mathcal{F} \in M_{\DT}(r,D)$ be a $\mu_{\omega}$-stable sheaf of homological dimension at most one, i.e., $\mathcal{F}$ satisfies $\lExt^i(\F,\O_X) = 0$ for $i\geq 2$.

In this section, we embed the Quot schemes $\Quot_X(\mathcal F)$ and $\Quot_X(\lExt^1(\mathcal F,\O_X))$ in suitable moduli spaces of torsion free sheaves and PT pairs, respectively.
In the Calabi--Yau case, we use these embeddings to define $\mathcal F$-local DT and PT invariants of $X$, representing the virtual contributions of $\mathcal F$ to the global invariants.\footnote{The definition makes sense for arbitrary $3$-folds, but the enumerative meaning of these numbers is less clear without the Calabi--Yau condition.}
We prove Theorem \ref{thm2} (resp.~Theorem \ref{thm3}) by applying the integration morphism $I^B$ (resp.~$I^E$) to an identity in the Hall algebra of $\catA_\mu$ (Proposition \ref{HallIdentity2}), which is the $\F$-local analogue of the global identity \cite[Thm.~1.2]{Toda2}.

\subsection{Embedding Quot schemes in DT and PT moduli spaces}

Recall that closed immersions of schemes are the proper \emph{monomorphisms} (in the categorical sense), and a monomorphism of schemes is a morphism $a \colon Y \to Z$ such that the induced natural transformation $\Hom(-,Y) \to \Hom(-,Z)$ is injective.
If $Y$ and $Z$ are finite type schemes over an algebraically closed field, then $a$ is a closed immersion if and only if it is proper, injective on closed points, and injective on tangent spaces at closed points.
We will use the latter characterisation in Proposition \ref{DTembedding} and the former characterisation in Proposition \ref{PTembedding}.

\begin{prop}\label{DTembedding}
Taking the kernel of a surjection $\mathcal F\onto Q$ defines a closed immersion 
\[
\phi_{\F} \colon \Quot_X(\mathcal F) \into M_{\DT}(r,D).
\]
\end{prop}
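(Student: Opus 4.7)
The plan is to factor the verification into four parts: construct $\phi_\F$ as a morphism of schemes, show it is proper, and then establish universal injectivity and unramifiedness, so that $\phi_\F$ becomes a proper monomorphism and hence a closed immersion, as recalled in the paragraph preceding the proposition.

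For the construction I would form the universal short exact sequence
\[
0 \to \mathcal E \to \pr_X^\ast \F \to \mathcal Q \to 0
\]
on $X\times \Quot_X(\F)$, with $\mathcal Q$ the universal quotient. Since $\mathcal Q$ and $\pr_X^\ast \F$ are flat over the base, so is $\mathcal E$. At any closed point $t = [\F\onto Q]$, the fibre $\mathcal E_t$ has rank $r$ and first Chern class $D$ (as $Q$ is zero-dimensional) and is $\mu_\omega$-stable: any destabilising subsheaf of $\mathcal E_t$ embeds into $\F$ with the same slope and would destabilise $\F$. Hence $\mathcal E$ is a flat family of $\mu_\omega$-stable sheaves of numerical type $(r,D)$, and the coarse moduli property of $M_{\DT}(r,D)$ produces the morphism $\phi_\F$. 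Properness is automatic because $\Quot_X(\F)$ is projective and $M_{\DT}(r,D)$ is separated.

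The crucial technical step is the Ext computation $\Hom(\mathcal E,\F) = \C\cdot\iota$, where $\iota\colon \mathcal E\hookrightarrow \F$ denotes the inclusion. Applying $\Hom(-,\F)$ to the defining short exact sequence yields
\[
0 = \Hom(Q,\F) \to \End(\F) \to \Hom(\mathcal E,\F) \to \Ext^1(Q,\F),
\]
in which $\Hom(Q,\F)$ vanishes since $Q$ is torsion and $\F$ is torsion-free, while $\Ext^1(Q,\F)=0$ by the vanishings recorded in Remark~\ref{rmk:vanishingext}, which crucially require both the zero-dimensionality of $Q$ and the hypothesis that $\F$ has homological dimension at most one. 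Since $\F$ is simple by $\mu_\omega$-stability, $\End(\F)=\C$, and the resulting isomorphism $\End(\F)\xrightarrow{\sim}\Hom(\mathcal E,\F)$ sends $\mathrm{id}_\F$ to $\iota$.

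With this computation in hand, both remaining properties are formal. For universal injectivity: given a second surjection $\F\onto Q'$ together with an isomorphism $\alpha\colon \mathcal E\xrightarrow{\sim}\mathcal E'$ of kernels, the composition $\iota'\circ\alpha$ lies in $\Hom(\mathcal E,\F)=\C\cdot\iota$ and hence equals a nonzero scalar multiple of $\iota$, so the subsheaves $\iota(\mathcal E),\iota'(\mathcal E')\subset\F$ coincide and the two quotients are equivalent. For unramifiedness: applying $\Hom(\mathcal E,-)$ to the same short exact sequence identifies the differential $d\phi_\F$ at $[\F\onto Q]$ with the connecting map $\delta\colon \Hom(\mathcal E,Q)\to \Ext^1(\mathcal E,\mathcal E)$, whose kernel is the image of $\Hom(\mathcal E,\F)\to\Hom(\mathcal E,Q)$; but every element of $\C\cdot\iota$ vanishes after post-composition with $\F\to Q$, so $\delta$ is injective. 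The only real obstacle is the Ext vanishing $\Ext^1(Q,\F)=0$, which is fortunately available from the discussion preceding the proposition; everything else is a standard application of the closed-immersion criterion.
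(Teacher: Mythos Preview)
Your proof is correct, but it differs from the paper's in one important respect: you rely on the vanishing $\Ext^1(Q,\F)=0$, which genuinely requires $\F$ to have homological dimension at most one, whereas the paper's argument does not use this hypothesis at all---the remark immediately following the paper's proof makes this point explicit. After constructing $\phi_\F$ and noting properness exactly as you do, the paper simply observes that $\phi_\F$ is injective on all $B$-valued points ``by definition of the Quot functor, identifying two flat quotients precisely when they share the same kernel'', and invokes the proper-monomorphism criterion directly. Your route through the computation $\Hom(\mathcal E,\F)=\C\cdot\iota$ and the identification of $d\phi_\F$ with the connecting map $\Hom(\mathcal E,Q)\to\Ext^1(\mathcal E,\mathcal E)$ is more hands-on and makes the infinitesimal picture transparent, which is pleasant; but it comes at the cost of an unnecessary hypothesis. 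If you wish to retain your structure while matching the paper's generality, you can obtain $\Hom(\mathcal E,\F)=\C\cdot\iota$ without the Ext vanishing: any nonzero map $\mathcal E\to\F$ between $\mu_\omega$-stable torsion free sheaves of equal slope is injective, and passing to double duals (which agree, since $\F/\mathcal E$ has codimension three) lands it in $\End(\F^{\vee\vee})=\C$.
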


\begin{proof}
Let $B$ be a scheme, $\pi_X\colon X\times B\ra X$ the projection, and $\pi_X^\ast \F\onto \mathscr Q$ a flat family of zero-dimensional quotients of $\F$. Then the kernel $\mathcal K\subset \pi_X^\ast \F$ is $B$-flat and defines a family of torsion free $\mu_\omega$-stable sheaves on $X$. Therefore the association
\[
[\mathcal F\onto Q]\mapsto \ker(\mathcal F\onto Q)
\]
is a morphism to the moduli stack ${\mathcal M}_{\DT}(r,D)$, and composing with the natural morphism $p \colon {\mathcal M}_{\DT}(r,D) \to M_{\DT}(r,D)$ to the coarse moduli space defines $\phi_{\F}$.
Note that the morphism $p$ is a $\mathbb{G}_{m}$-gerbe by the coprimality condition~\eqref{coprime}.
Since the Quot scheme is proper and $M_{\DT}(r,D)$ is separated, the morphism is proper.

Next, we verify that $\phi_{\mathcal{F}}$ is injective on tangent spaces.
Let $q = [\mathcal F \onto Q]$ be a closed point of $\Quot_X(\mathcal F)$, and let $K \subset \mathcal F$ be the kernel of the surjection.
Note that $K$ is stable, hence simple, and so $\hom(K,K) = 1$.
The tangent space to $\Quot_X(\mathcal F)$ at $q$ is $\Hom(K,Q)$.
To establish injectivity of $\phi_{\mathcal F}$ on tangent spaces, it suffices to show that in the exact sequence
\[
0\ra \Hom(K,K)\xrightarrow{i} \Hom(K,\mathcal F)\xrightarrow{u} \Hom(K,Q)\xrightarrow{\dd\phi_{\mathcal F}} \Ext^1(K,K),
\]
where $\dd\phi_{\mathcal F}$ is the tangent map to $\phi_{\mathcal F}$ at $q$, one has $u = 0$.
In the exact sequence
\[
\Hom(Q,\mathcal F)\ra \Hom(\mathcal F,\mathcal F)\ra \Hom(K,\mathcal F)\ra \Ext^1(Q,\mathcal F),
\]
the two outer terms vanish by Remark~\ref{rmk:vanishingext}. By stability of $\mathcal F$, this vanishing implies that $\hom(K,\mathcal F) = 1$.
Therefore $i$ is an isomorphism, and $u = 0$ as required.

Finally, we prove that $\phi_{\mathcal F}$ is injective on closed points.
Let $q_{i} = [\mathcal{F} \onto Q_i]$ for $i = 1,2$ be two closed points, and assume that $\phi_{\mathcal{F}}(q_1) = \phi_{\mathcal{F}}(q_2)$.
Thus, their kernels are isomorphic as sheaves, say via $\alpha \colon K_1\, \widetilde{\to}\, K_2$.
By the previous argument we have $\hom(K_i,\mathcal{F}) = 1$, implying that the embeddings of the kernels into $\mathcal{F}$, which we denote by $\iota_{i} \colon K_{i} \into \mathcal{F}$, are unique up to scaling.
It now follows, scaling the isomorphism $\alpha$ if necessary, that the diagram 
\[
\begin{tikzcd}
    K_1 \arrow[hook]{r}{\iota_1} \arrow[swap]{d}{\alpha} & \mathcal{F} \arrow[equal]{d} \\
    K_2 \arrow[hook]{r}{\iota_2} & \mathcal{F}
\end{tikzcd}
\]
commutes.
Thus $q_1 = q_2$ in $\Quot_X(\mathcal{F})$, proving that $\phi_{\mathcal{F}}$ is a closed immersion.
\end{proof}

\begin{remark}
As the proof shows, the initial assumption on the homological dimension of $\F$ is not needed. It will be needed in Proposition \ref{PTembedding} (the ``PT side'').
\end{remark}

We now move to the PT side.
First, we construct a map
\[
\Quot_X(\lExt^1(\mathcal F,\O_X))\ra M_{\PT}(r,D)
\]
on the level of $\C$-valued points.
Consider a surjection
\[
t \colon \lExt^1(\mathcal F,\O_X) \onto Q,
\]
and recall the following identifications, induced by the derived dualising functor,
\be\label{homs617}
\Hom(\mathcal F^{\vee},Q[-1]) = \Hom(Q^{\textrm{D}}[-2],\mathcal F) = \Ext^1(Q^{\textrm{D}}[-1],\mathcal F).
\ee
We interpret $t$ as an element of the first Hom-space by precomposing its shift
\[
\bar{t} \colon \mathcal F^{\vee} \to \lExt^1(\mathcal F,\O_X)[-1] \xrightarrow{t[-1]} Q[-1],
\]
and we associate to $t$ the extension
\be\label{triangle10482}
\mathcal F \into J^\bullet \onto Q^{\textrm{D}}[-1]
\ee
in $\mathscr{A}_{\mu}$ corresponding to $\bar{t}$ under \eqref{homs617}. 
Note that $\rk J^\bullet = \rk \mathcal F = r$ and $c_1(J^\bullet) = c_1(\mathcal F) = D$.
It is clear that $J^\bullet$ defines a pre-PT pair based at $\mathcal F$.
To see that $J^\bullet$ is in fact a PT pair, we dualise again by applying $\lHom(-,\O_X)$ to the defining triangle \eqref{triangle10482}.
We find
\[
\cdots \to \lExt^1(\mathcal F,\O_X) \xrightarrow{t} Q = \lExt^3(Q^{\textrm{D}},\O_X) \to \lExt^2(J^\bullet,\O_X) \to 0
\]
where the last zero is $\lExt^2(\mathcal F,\O_X) = 0$.
Here, we have $t = H^1(\bar{t})$ since the derived dual is an involution.
Thus the surjectivity of the morphism $t$ is equivalent to the vanishing of $\lExt^2(J^\bullet,\O_X)$.
In turn, by Lemma~\ref{lem:PrePT_is_PT}, this means that $J^\bullet$ is a PT pair.

\begin{remark}
Conversely, any $\mathcal F$-local PT pair, consisting of an exact triangle of the form \eqref{triangle10482}, gives rise to a surjection $\lExt^1(\mathcal F,\O_X) \onto Q$ by applying $\lHom(-,\O_X)$.
\end{remark}

To extend this association to families, we make use of the following result.
\begin{lemma}\label{TopExt}
Let $B$ be a scheme, and let $\mathscr Q\in \Coh(X\times B)$ be a $B$-flat family of zero-dimensional sheaves on a smooth scheme $X$ of dimension $d$. Then one has
\[
\lExt^i_{X\times B}(\mathscr Q,\O_{X\times B}) = 0, \qquad i\neq d.
\]
Moreover, the base change map
\[
\lExt^d_{X\times B}(\mathscr Q,\mathscr O_{X\times B})\otimes_{\O_B}k(b) \ra \lExt^d_X(\mathscr Q_b,\mathscr O_X)
\]
is an isomorphism for all $b\in B$.
\end{lemma}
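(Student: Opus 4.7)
The plan is to reduce both claims to a local construction of a finite free resolution of $\mathscr Q$ of length exactly $d$ on $X \times B$, and then compute $\textbf R\lHom(\mathscr Q, \O_{X \times B})$ fiberwise using this resolution together with a standard cohomology-and-base-change argument.

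Since both assertions are local on $X \times B$, I would work in a neighbourhood of a closed point $(x,b)$ in the support of $\mathscr Q$. I would build a locally free resolution
\[
E_d \to E_{d-1} \to \cdots \to E_0 \to \mathscr Q \to 0
\]
by iteratively taking kernels, and argue that it can be truncated at length $d$. Setting $K = \ker(E_{d-1} \to E_{d-2})$, an inductive application of the long exact $\mathrm{Tor}$ sequence, combined with $B$-flatness of $\mathscr Q$ and of each $E_i$, shows that $K$ is itself $B$-flat. The fiber $K|_b$ is the $d$-th syzygy in the restricted resolution $E_\bullet|_b \to \mathscr Q_b$, and since $\mathscr Q_b$ is a zero-dimensional sheaf on the smooth $d$-fold $X$, the Auslander--Buchsbaum formula gives $\mathrm{pd}(\mathscr Q_b) = d$, so $K|_b$ is locally free on $X$. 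Combining $B$-flatness of $K$ with locally-free fibers via the local criterion of flatness (Nakayama) yields local freeness of $K$ on $X \times B$ near $(x,b)$, truncating the resolution at length $d$.

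Given such a resolution, the complex $F^\bullet = \lHom(E_\bullet, \O_{X \times B})$ is a complex of locally free sheaves in cohomological degrees $[0,d]$ representing $\textbf R\lHom(\mathscr Q, \O_{X \times B})$, whose cohomology is $\lExt^i_{X \times B}(\mathscr Q, \O_{X \times B})$. Its derived restriction to the fiber, $F^\bullet|_b$, represents $\textbf R\lHom(\mathscr Q_b, \O_X)$, which by smoothness of $X$ and zero-dimensionality of $\mathscr Q_b$ has cohomology concentrated in degree $d$ with value $\mathscr Q_b^{\mathrm D}$. I would then translate this fiberwise concentration into the global statement via the hypercohomology spectral sequence
\[
E_2^{p,q} = \mathrm{Tor}^{\O_B}_{-q}(\lExt^p_{X \times B}(\mathscr Q, \O_{X \times B}), k(b)) \Longrightarrow \lExt^{p+q}_X(\mathscr Q_b, \O_X),
\]
available because each $F^p$ is $B$-flat. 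A descending induction on $p$, starting from the largest non-vanishing $\lExt^p$, identifies this top degree as $d$ via the edge morphism into the abutment and Nakayama's lemma; iterating the argument with the downward differentials forces $\lExt^p = 0$ for all $p \neq d$. The vanishing of all positive $\mathrm{Tor}$'s of $\lExt^d$ against residue fields then implies $B$-flatness of $\lExt^d$, and the base change isomorphism is the associated degeneration of the spectral sequence at the $(d,0)$-entry.

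The main obstacle is the construction of the length-$d$ resolution. Because $B$ is arbitrary (possibly singular), $X \times B$ need not be regular, so global projective dimension bounds on $X \times B$ are unavailable and one cannot invoke Auslander--Buchsbaum directly on the total space. The key is to exploit $B$-flatness of $\mathscr Q$ to reduce a statement about $X \times B$ to the corresponding fiberwise statement on the smooth variety $X$, where the Auslander--Buchsbaum formula applies cleanly, and then to use the local criterion of flatness to globalise the resulting local freeness of the $d$-th syzygy.
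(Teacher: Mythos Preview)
Your argument is correct and self-contained. The construction of the length-$d$ locally free resolution via Auslander--Buchsbaum on the fibres together with the local criterion of flatness is exactly the right mechanism, and the cohomology-and-base-change step (whether phrased via your spectral sequence or by the equivalent descending induction on the syzygies of the dualised complex) is standard.

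The paper, by contrast, does not argue at all: it simply cites two general base-change theorems for relative $\lExt$ from an external reference (Altman--Kleiman type results, labelled \cite[Thm.~1.9, Thm.~1.10]{PicScheme} in the paper). Those theorems assert precisely that if the fibrewise $\lExt^i$ vanishes for $i$ above (resp.\ below) a given degree, then the relative $\lExt^i$ vanishes in the same range and the base change map in the adjacent degree is an isomorphism; the lemma is then an immediate consequence of the fibrewise vanishing $\lExt^i_X(\mathscr Q_b,\O_X)=0$ for $i\neq d$. So your proof is essentially an in-place reproof of the special case of those theorems that is needed here. This buys self-containment at the cost of length; the paper's approach buys brevity at the cost of a black-box citation. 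One small remark: your descending induction via the spectral sequence is stated somewhat tersely---the interaction between the higher $\mathrm{Tor}$'s of $\lExt^d$ and the vanishing of the lower $\lExt^p$'s has to be handled simultaneously---but this is routine and well known.
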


\begin{proof}
The vanishing follows from \cite[Theorem 1.10]{PicScheme}.
The base change property follows from \cite[Theorem 1.9]{PicScheme}.
\end{proof}

\begin{prop} \label{PTembedding}
The association $t\mapsto J^\bullet$ extends to a closed immersion
\[
\psi_{\mathcal F} \colon \Quot(\lExt^1(\mathcal F,\O_X)) \into M_{\PT}(r,D).
\]
\end{prop}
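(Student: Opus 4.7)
The plan is to extend the pointwise association $t\mapsto J^\bullet$ to a morphism of functors, and then verify that the resulting morphism of schemes is a proper monomorphism.

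For the family construction, let $B$ be a scheme and let $t\colon\pi_X^*\lExt^1(\F,\O_X)\onto \mathscr Q$ be a $B$-flat family of zero-dimensional quotients on $X\times B$. Since $\F$ has homological dimension at most one, a two-term locally free resolution of $\F$ pulls back to one of $\pi_X^*\F$, so both $\pi_X^*\F$ and its derived dual $\pi_X^*\F^\vee = \mathbf{R}\lHom(\pi_X^*\F,\O_{X\times B})$ are perfect complexes on $X\times B$. Lemma~\ref{TopExt} gives a canonical isomorphism $\mathscr Q^\vee \cong \mathscr Q^{\mathrm D}[-3]$, so that the relative version
\[
\bar t_B\colon \pi_X^*\F^\vee \to \pi_X^*\lExt^1(\F,\O_X)[-1] \xrightarrow{t[-1]} \mathscr Q[-1]
\]
of $\bar t$ dualises to a morphism $\mathscr Q^{\mathrm D}[-2]\to \pi_X^*\F$ in $D^b(X\times B)$. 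I would define $J^\bullet_B$ to be its cone, fitting into an exact triangle $\pi_X^*\F\to J^\bullet_B \to \mathscr Q^{\mathrm D}[-1]$. To verify that $J^\bullet_B$ is a family of PT pairs in the sense of Definition~\ref{def:familyPT}, the base-change part of Lemma~\ref{TopExt} shows that derived restriction to each fibre commutes with the dualisations appearing in the construction, so $\mathbf{L}i_b^*J^\bullet_B$ is canonically isomorphic to the PT pair produced pointwise from $\mathscr Q_b$; Lemma~\ref{lem:PrePT_is_PT} then guarantees the PT condition fibrewise, and perfectness of $J^\bullet_B$ follows from the fact that its fibres have bounded Tor-dimension on the smooth variety $X$.

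Properness of $\psi_\F$ is then automatic: the Quot scheme $\Quot_X(\lExt^1(\F,\O_X))$ is projective over $\C$, whereas $M_{\PT}(r,D)$ is a proper algebraic space by \cite[Thm.~1.2]{Lo}. It remains to show $\psi_\F$ is a monomorphism. Suppose $t_1,t_2\in \Quot_X(\lExt^1(\F,\O_X))(B)$ induce the same morphism $B\to M_{\PT}(r,D)$. By the $\G_m$-gerbe description of $\mathcal M_{\PT}(r,D)\to M_{\PT}(r,D)$ recalled in Remark~\ref{rem:stacks_gerbes}, the associated PT families $J^\bullet_1,J^\bullet_2$ are étale locally isomorphic on $B$. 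Restricting such an isomorphism to $H^0$ yields an automorphism of $\pi_X^*\F$ on an étale cover $B'\to B$; since $\F$ is slope-stable, all automorphisms of $\pi_X^*\F|_{B'}$ are scalar multiplication by units $c\in \O^\times_{B'}$. Combining this with the compatible isomorphism on $H^1$ and dualising via Lemma~\ref{TopExt} produces a commutative square showing that $t_1|_{B'}$ and $t_2|_{B'}$ have the same kernel on $X\times B'$. They therefore define the same $B'$-point of $\Quot_X(\lExt^1(\F,\O_X))$, and étale descent yields $t_1 = t_2$ in $\Quot_X(\lExt^1(\F,\O_X))(B)$.

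The main obstacle is the family version of the construction: writing $J^\bullet_B$ concretely enough to establish that it is perfect on $X\times B$ and that its formation commutes with derived base change. The subtlety is that $\mathscr Q^{\mathrm D}$ itself need not be a perfect complex on $X\times B$ since $B$ (and hence $X\times B$) need not be regular, and the identification $\mathscr Q^\vee\cong \mathscr Q^{\mathrm D}[-3]$ holds only in the derived category. I expect to handle this by using the two-term locally free resolution of $\F$ to present $J^\bullet_B$ as an explicit two-term complex of coherent sheaves, which simultaneously furnishes perfectness and the base-change compatibility required to identify $\mathbf{L}i_b^*J^\bullet_B$ with the pointwise PT pair.
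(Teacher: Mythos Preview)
Your proposal is correct and follows the same overall architecture as the paper's proof: build the family $J^\bullet_B$ by dualising $\bar t_B$, verify the PT condition fibrewise via Lemma~\ref{lem:PrePT_is_PT}, and conclude by properness plus monomorphism. Two points of comparison are worth noting.

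\smallskip
\textbf{Perfectness.} Your closing concern about $\mathscr Q^{\mathrm D}$ not being perfect on $X\times B$ is unfounded, and the paper dispatches it in one line: since $\mathscr Q$ is $B$-flat and $X$ is smooth, $\mathscr Q$ is a perfect complex on $X\times B$ (a $B$-flat coherent sheaf whose fibres have bounded Tor-amplitude on the smooth $X$ has bounded Tor-amplitude on $X\times B$). Hence $\mathscr Q^\vee$ is perfect, and Lemma~\ref{TopExt} identifies it with $\mathscr Q^{\mathrm D}[-3]$. So there is no need for your proposed workaround via an explicit two-term presentation of $J^\bullet_B$; perfectness and base-change compatibility come for free once you know $\mathscr Q$ is perfect.

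\smallskip
\textbf{Monomorphism.} Here the paper's argument is genuinely different and more direct. Rather than invoking the $\G_m$-gerbe, \'etale-local isomorphisms of the PT families, and simplicity of $\F$, the paper observes that the assignment $t\mapsto \bar t$ is injective for a purely cohomological reason: it is precomposition with the canonical map $g\colon \pi_X^*\F^\vee \to \pi_X^*\lExt^1(\F,\O_X)[-1]$, and
\[
(-)\circ g\colon \Hom\bigl(\pi_X^*\lExt^1(\F,\O_X)[-1],\mathscr Q[-1]\bigr)\longrightarrow \Hom\bigl(\pi_X^*\F^\vee,\mathscr Q[-1]\bigr)
\]
has kernel contained in $\Hom(\pi_X^*H^0(\F^\vee)[1],\mathscr Q[-1])=\Ext^{-2}(\pi_X^*H^0(\F^\vee),\mathscr Q)=0$. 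Thus $\bar t=\bar u$ forces $t=u$. Your route through \'etale descent and the stability of $\F$ also works, but the paper's argument avoids all of that machinery and makes no use of the stability of $\F$ beyond what is already needed to land in the moduli space.
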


\begin{proof}
Let $B$ be a base scheme, and let $\pi_X\colon X\times B\ra X$ denote the natural projection.
Let $\mathscr Q \in \Coh(X \times B)$ be a $B$-flat family of zero-dimensional sheaves on $X$ receiving a surjection
\begin{equation*}
t \colon \pi_X^*\lExt^1(\mathcal F,\O_X) \onto \mathscr Q.
\end{equation*}
In particular, $\mathscr Q \in \Perf(X \times B)$ is a perfect object in the derived category of $X \times B$.
Pulling back the triangle $H^0(\mathcal F^\vee)\ra \mathcal F^\vee \ra \lExt^1(\mathcal F,\O_X)[-1]$ on $X$ to $X \times B$ yields the exact triangle
\be\label{triangle7163}
\pi_X^*H^0(\mathcal F^{\vee}) \to \pi_X^*(\mathcal F^{\vee}) \to \pi_X^*\lExt^1(\mathcal F,\O_X)[-1],
\ee
and precomposing $t[-1]$ yields the morphism $\bar{t} \colon \pi_X^*(\mathcal F^{\vee}) \to \mathscr Q[-1]$.
Since all objects we consider are perfect complexes on $X \times B$, so are their derived duals.
We deduce
\begin{equation*}
\pi_X^*(\mathcal F^{\vee}) = (\pi_X^*\mathcal F)^{\vee}.
\end{equation*}
More generally, for complexes $E,F \in \Perf(X \times B)$ we have a natural isomorphism
\[
{\textbf R}\lHom(E,F) \cong {\textbf R}\lHom(F^{\vee},E^{\vee}).
\]
By Lemma~\ref{TopExt}, we have a natural isomorphism of perfect complexes on $X \times B$
\begin{equation*}
{\textbf R} \lHom\left(\pi_X^*(\mathcal F^{\vee}),\mathscr Q[-1]\right) \cong {\textbf R} \lHom\left(\mathscr Q^{\textrm{D}}[-2],\pi_X^*\mathcal F\right).
\end{equation*}
Taking derived global sections yields an isomorphism of complexes of vector spaces, and further taking cohomology yields a natural isomorphism of $\C$-linear Hom-spaces
\be\label{gnagna311}
\Hom_{X \times B}\left(\pi_X^*(\mathcal F^{\vee}),\mathscr Q[-1]\right) \cong \Ext^1_{X \times B}\left({\mathscr Q}^{\textrm{D}}[-1],\pi_X^*\mathcal F\right).
\ee
We write the image of $\bar{t}$ under this identification as an extension
\be\label{Jfamily}
\pi_X^*\mathcal F \to J^{\bullet} \to \mathscr Q^{\textrm{D}}[-1].
\ee
We claim that $J^{\bullet}$ is a family of PT pairs parametrised by $B$ (cf.~Definition \ref{def:familyPT}).

First, taking the derived fibre of the triangle \eqref{Jfamily} shows that $J_b^\bullet = {\textbf L} i_b^*(J^{\bullet})$ is a pre-PT pair on $X_b$ 
for all closed points $b \in B$, where $X_b = X\times \set{b}$ and $i_b\colon X_b \into X\times B$ is the natural closed immersion.
Second, $J^{\bullet}$ is a perfect complex because $\pi_X^\ast \mathcal F$ and $\mathscr Q^{\textrm{D}}[-1]$ are.
Thus $J^{\bullet}$ defines a family of pre-PT pairs based at $\mathcal F$ in the sense of Definition~\ref{def:familyPT}.

To see that each derived fibre $J^{\bullet}_b$ is a PT pair, recall that for a perfect complex the operations of taking the derived fibre and taking the derived dual commute.
In other words, there is a \emph{canonical} isomorphism
\[
{\textbf L} i_b^*(E^{\vee}) \cong ({\textbf L} i_b^*E)^{\vee}
\]
for all $b \in B$, where $E \in \Perf(X \times B)$ is a perfect complex.
As a consequence, the diagram
\begin{equation*}
\begin{tikzcd}[row sep=large]
\Hom_{X \times B}\left(\pi_X^*\mathcal F^{\vee},{\mathscr Q}[-1]\right) \arrow[r,"(-)^{\vee}"]\arrow[d,"{\textbf L} i_b^*"] & 
\Hom_{X \times B}\left({\mathscr Q}^{\textrm{D}}[-2], \pi_X^*\mathcal F \right) \arrow[d,"{\textbf L} i_b^*"] \\
\Hom_{X_b}\left(\mathcal F^{\vee},{\mathscr Q}_b[-1]\right) \arrow[r,"(-)^{\vee}"] &
\Hom_{X_b}\left({\mathscr Q}^{\textrm{D}}_b[-2],\mathcal F\right)
\end{tikzcd}
\end{equation*}
commutes.\footnote{Here we are implicitly using Lemma \ref{TopExt}, saying that dualising commutes with base change for a flat family of zero-dimensional sheaves.
More generally, dualising commutes with any base change for perfect objects.}
In words, given the morphism $\bar{t} \colon \pi_X^*(\mathcal F^{\vee}) \to {\mathscr Q}[-1]$, we obtain our family of pre-PT pairs $J^{\bullet}$ on $X \times B$ by dualising, and taking its derived fibre $J^{\bullet}_b = {\textbf L} i_b^*(J^{\bullet})$ yields a complex that is canonically isomorphic to the pre-PT pair obtained by first restricting $t$ to the derived fibre
\[ 
\bar{t}_b = {\textbf L} i_b^*(\bar{t}) \colon \mathcal F^{\vee} \to \lExt^1(\mathcal F,\O_X)[-1] \xrightarrow{t_b[-1]} {\mathscr Q}_b[-1]
\]
and then dualising.
By taking the derived fibre of the triangle \eqref{Jfamily} and running the argument for a $\C$-valued point, it follows that $\lExt^2(J^{\bullet}_b,\O_{X_b}) = 0$ because each map $t_b$ is surjective.
Thus $J^{\bullet}$ defines a family of PT pairs by Lemma~\ref{lem:PrePT_is_PT}.
We have obtained a morphism to the moduli stack $\mathcal{M}_{\PT}(r,D)$, and composing with $\mathcal{M}_{\PT}(r,D) \to M_{\PT}(r,D)$, defines $\psi_{\mathcal F}$.

Note that $\psi_{\mathcal F}$ is proper because its domain is proper and the PT moduli space is separated.
To prove that it is a closed immersion, it is enough to show it is injective on $B$-valued points.
Let $t$ and $u$ be two families of surjections with (the same) target ${\mathscr Q}$, and assume that they give rise to the same family of PT pairs $J^\bullet$.
Then, by \eqref{gnagna311}, we conclude that $\bar t= t[-1]\circ g = u[-1]\circ g=\bar u$, where
\[
g \colon \pi_X^*(\mathcal F^{\vee}) \to \pi_X^*\lExt^1(\mathcal F,\O_X)[-1]
\]
is the morphism appearing in \eqref{triangle7163}.
But the natural map
\[
(-)\circ g\colon \Hom\left(\pi_X^*\lExt^1(\mathcal F,\O_X)[-1],\mathscr Q[-1]\right)\ra \Hom\left(\pi_X^*(\mathcal F^{\vee}),\mathscr Q[-1]\right)
\]
is injective because $\Hom(\pi_X^\ast H^0(\mathcal F^\vee)[1],\mathscr Q[-1]) = \Ext^{-2}(\pi_X^*H^0(\mathcal F^\vee),{\mathscr Q}) = 0$, i.e., its kernel vanishes.
It follows that $t = u$.
This completes the proof.
\end{proof}

See Section \ref{UniversalEmbedding}, and in particular Proposition \ref{prop:UnivPTimmersion}, for a `universal' closed immersion generalising Proposition \ref{PTembedding}.

\subsection{The $\mathcal F$-local Hall algebra identity}\label{DTPT_identity}
In this section, we prove an $\F$-local analogue of Toda's Hall algebra identity in \cite[Lemma~3.16]{Toda2} which gives rise to the higher rank DT/PT correspondence by applying the integration morphisms $I^B$.

We introduce the Hall algebra elements.
Let $\Coh_0(X)[-1]$ be the shift of the category of zero-dimensional coherent sheaves on $X$, which we denote by $\mathcal C_{\infty}$ to be consistent with the notation of \cite{Toda2}.
The moduli stack of objects in ${\mathcal C}_{\infty}$ is an open substack $\mathcal C_{\infty} \subset \mathcal A_{\mu}$.
We obtain an element
\[
\delta(\mathcal C_\infty) = \left[\mathcal C_\infty \hookrightarrow \mathcal A_\mu\right]\in \widehat H_{\#}(\mathscr A_\mu).
\]

Let $p\colon \mathcal M_{\DT}(r,D)\ra M_{\DT}(r,D)$ denote the natural morphism from the moduli stack of DT objects to its coarse moduli space.
Consider the cartesian diagram
\[
\begin{tikzcd}
\mathcal Q_X(\F)\MySymb{dr} \ar[hook]{r}{\iota_{\F}}\ar[swap]{d}{p'} & \mathcal M_{\DT}(r,D)\ar{d}{p}\arrow[hook]{r}{\textrm{open}} & \catA_\mu\\
\Quot_X(\F) \ar[hook,swap]{r}{\phi_{\F}} & M_{\DT}(r,D) &
\end{tikzcd}
\]
defining $\mathcal Q_X(\F)$. The map $\iota_{\F}$ is a closed immersion by Proposition \ref{DTembedding} and base change.
Hence $\mathcal Q_X(\F)$ defines a locally closed substack of $\catA_\mu$. A similar picture holds on the PT side by replacing $\F$ by $\lExt^1(\F,\O_X)$.

For the sake of brevity, we rename these objects 
\[
\mathcal Q_{\DT}^{\F} = \mathcal Q_X(\F),\quad \mathcal Q_{\PT}^{\F} = \mathcal Q_X(\lExt^1(\F,\O_X)).
\]
We now obtain Hall algebra elements
\[
    \delta_{\DT}^{\F} = \left[\mathcal Q_{\DT}^{\F}\ra \catA_\mu\right],\quad 
    \delta_{\PT}^{\F} = \left[\mathcal Q_{\PT}^{\F}\ra \catA_\mu\right]
\]
in $\widehat H_{r,D}(\catA_\mu)$.
By base change, the morphism $p'$ and its analogue for $\lExt^1(\F,\O_X)$ are (trivial) $\G_m$-gerbes (cf.~Remark~\ref{rem:stacks_gerbes}).
Thus it follows that the elements
\[
\overline \delta_{\DT}^{\F} = (\L-1)\cdot \delta_{\DT}^{\F},\quad \overline \delta_{\PT}^{\F} = (\L-1)\cdot \delta_{\PT}^{\F}
\]
lie in the regular submodule $\widehat H_{r,D}^{\reg}(\mathscr A_\mu)$. 
Projecting to the semiclassical limit yields elements
\be\label{Main_HA_elements}
\overline \delta_{\DT}^{\F},\,\,\,\overline \delta_{\PT}^{\F} \,\in\,
\widehat H_{r,D}^{\sc}(\mathscr A_\mu).
\ee

Let us form the $\mathscr A_\mu$-stacks 
\[
\mathcal Q_{\DT}^{\mathcal F}\star \mathcal C_\infty,\quad \mathcal C_\infty\star \mathcal Q_{\PT}^{\mathcal F}
\]
via the pullback construction described in \eqref{HAproduct}. For a scheme $B$, a $B$-valued point of $\mathcal Q_{\DT}^{\mathcal F}\star \mathcal C_\infty$ is an exact triangle $E_1 \to E \to E_3$ in $\Perf(X \times B)$ such that $E_1$, $E_3 \in \catA_{\mu}(B)$, $E_1$ is a $B$-valued point of $\mathcal Q_{\DT}^{\mathcal F}$, and $E_3$ is a $B$-valued point of $\mathcal C_{\infty}$.
Similarly, a $B$-valued point of $\mathcal C_\infty\star \mathcal Q_{\PT}^{\mathcal F}$ is an exact triangle $E_1 \to E \to E_3$ in $\Perf(X \times B)$ where $E_1$ is a $B$-valued point of $\mathcal C_\infty$ and $E_3$ is a $B$-valued point of $\mathcal Q_{\PT}^{\mathcal F}$.

\begin{lemma}\label{Closed_Points_13}
There is an equivalence at the level of $\C$-valued points
\[
\left(\mathcal Q_{\DT}^{\mathcal F}\star \mathcal C_\infty\right)(\C) = \left(\mathcal C_\infty\star \mathcal Q_{\PT}^{\mathcal F}\right)(\C).
\]
\end{lemma}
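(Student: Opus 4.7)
The plan is to identify both sides of the lemma with the set of three-step filtrations $0 \subset W_1 \subset W_2 \subset E$ in $\catA_\mu$ whose successive quotients are $(Q_1[-1], \F, Q_3[-1])$ for some zero-dimensional sheaves $Q_1, Q_3$, and then to produce a bijection by re-reading the filtration in two complementary ways. Concretely, a $\C$-point of $\mathcal Q_{\DT}^{\F} \star \mathcal C_\infty$ is a short exact sequence $0 \to E_1 \to E \to Q_3[-1] \to 0$ in $\catA_\mu$ with $E_1 = \ker(\F \twoheadrightarrow Q_1)$ and $Q_3 \in \Coh_0(X)$; rotating the sheaf sequence $0 \to E_1 \to \F \to Q_1 \to 0$ inside $D^b(X)$ yields a short exact sequence $0 \to Q_1[-1] \to E_1 \to \F \to 0$ in $\catA_\mu$ (using that $Q_1[-1] \in \catA_\mu$), so concatenating produces the desired filtration with $W_1 = Q_1[-1]$ and $W_2 = E_1$. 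Dually, a $\C$-point of $\mathcal C_\infty \star \mathcal Q_{\PT}^{\F}$ is a short exact sequence $0 \to F_1 \to E \to F_3 \to 0$ with $F_1 = Q_1[-1]$ and $F_3$ an $\F$-local PT pair; setting $W_1 := F_1$ and $W_2 := $ the preimage of $\F \hookrightarrow F_3$ in $E$ recovers the same filtration.

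From these presentations, the bijection is then constructed by re-reading: from a DT-side point, I would set $F_1 := W_1 = Q_1[-1]$ and $F_3 := E/W_1$; conversely from a PT-side point, I would set $E_1 := W_2$ and $E_3 := E/W_2 \cong Q_3[-1]$. A $3 \times 3$-lemma chase in $\catA_\mu$ shows these constructions are mutually inverse, and produces respectively a pre-PT pair $F_3$ fitting in $0 \to \F \to F_3 \to Q_3[-1] \to 0$ and an $\catA_\mu$-extension $E_1$ of $\F$ by $Q_1[-1]$. The substantive content of the proof then lies in two well-definedness claims: (i) the pre-PT pair $F_3$ is in fact a \emph{genuine} PT pair; (ii) the extension $E_1$ is an honest coherent sheaf, equivalently its connecting map $\F \to Q_1$ is surjective.

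For claim~(i), I would invoke Lemma~\ref{lem:PrePT_is_PT} to reduce to showing $\Hom_{D^b(X)}(T[-1], F_3) = 0$ for every $T \in \Coh_0(X)$. Given a hypothetical non-zero $T[-1] \hookrightarrow F_3$, pulling back along the quotient $E \twoheadrightarrow F_3$ produces a shifted zero-dimensional subobject $T'[-1] \hookrightarrow E$ fitting in $0 \to Q_1[-1] \to T'[-1] \to T[-1] \to 0$. If the composition $T'[-1] \to E \twoheadrightarrow Q_3[-1]$ vanishes, then $T'[-1]$ factors through $W_2 = E_1$; applying $\Hom_{\catA_\mu}(T'[-1], -)$ to the sequence $0 \to W_1 \to E_1 \to \F \to 0$ and combining with the vanishing $\Hom_{\catA_\mu}(T'[-1], \F) = \Ext^1(T', \F) = 0$ --- a consequence of $\F$ being both a DT and a PT object by Corollary~\ref{cor:DTPT_object} --- forces $T'[-1] \subset W_1$, contradicting the non-vanishing of its image $T[-1]$ in $F_3$. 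The remaining case, where this composition is non-zero, I would reduce to the above by iterating the pullback over the image $T''[-1] \subset Q_3[-1]$ in $E$; this reduction is the step I expect to be the main technical obstacle, and it is most naturally organized by proving the stronger assertion that $W_1$ coincides with the maximal $\Coh_0(X)[-1]$-subobject of $E$ whenever $W_2$ is coherent. Claim~(ii) is handled symmetrically via a $\lHom(-, \O_X)$-computation invoking Lemma~\ref{TopExt} and the surjectivity of $\lExt^1(\F,\O_X) \twoheadrightarrow Q_3^{\mathrm D}$ encoding the PT condition on $F_3$.
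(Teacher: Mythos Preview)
Your proposed bijection is not well-defined: claim~(i) fails in general, and the ``stronger assertion'' you plan to invoke to rescue it is false. Take the split extension $E = E_1 \oplus Q_3[-1]$ with $E_1 = \ker(\F \twoheadrightarrow Q_1)$ and $Q_1, Q_3 \in \Coh_0(X)$ both nonzero. This is a valid $\C$-point on the DT side; here $W_2 = E_1$ is coherent, $W_1 = Q_1[-1]$, and your $F_3 = E/W_1 \cong \F \oplus Q_3[-1]$ admits the nonzero morphism $Q_3[-1] \hookrightarrow F_3$, so it is \emph{not} a PT pair. The maximal $\Coh_0(X)[-1]$-subobject of $E$ is $Q_1[-1] \oplus Q_3[-1]$, strictly larger than $W_1$. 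Dually, claim~(ii) fails for the split extension $E = Q_1[-1] \oplus J^\bullet$ on the PT side: your $E_1 = W_2 \cong Q_1[-1] \oplus \F$ has $H^1(E_1) = Q_1 \neq 0$, hence is not a sheaf and so does not lie in $\mathcal Q_{\DT}^{\F}$.

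The paper avoids this by working at the level of the middle object $E$ rather than attempting to match specific filtrations. The point is that both decompositions are in fact canonical: the DT-side one is forced to be $0 \to H^0(E) \to E \to H^1(E)[-1] \to 0$, and the PT-side one is forced to be $0 \to S[-1] \to E \to E/S[-1] \to 0$ with $S[-1]$ the \emph{maximal} $\Coh_0(X)[-1]$-subobject (existence uses noetherianity of $\catA_\mu$). So both groupoids of $\C$-points are parametrised by the object $E$ alone, and the lemma reduces to showing that $E$ admits one type of decomposition iff it admits the other; this is checked by two snake-lemma arguments in $\catA_\mu$, using exactly the vanishing $\Hom(\mathcal C_\infty,\F) = 0$ you identified. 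Your three-step-filtration picture can be salvaged, but only after replacing $W_1$ by the maximal subobject --- at which point it collapses into the paper's argument.
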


\begin{proof}
Let $E$ be a $\C$-valued point of $\mathcal C_\infty\star \mathcal Q_{\PT}^{\mathcal F}$. Then we can decompose $E$ as an extension of an object $J^\bullet \in \mathcal Q_{\PT}^{\mathcal F}$ by a (shifted) zero-dimensional object $Q[-1]\in\mathcal C_\infty$.
We obtain the following diagram
\[
\begin{tikzcd}
	H^0(E) \arrow[r,hook] \arrow[d,"f"] & E \arrow[r,two heads] \arrow[d,two heads] & H^1(E)[-1] \arrow[d,"g"]\\
    \mathcal F \arrow[r,hook] & J^\bullet \arrow[r,two heads] & P[-1]
\end{tikzcd}
\]
in $\mathscr{A}_{\mu}$, where $P\in \Coh_0(X)$.
The snake lemma in $\catA_{\mu}$ induces a four-term exact sequence
\[
0 \to \ker(f) \to Q[-1] \to \ker(g) \to \coker(f) \to 0
\]
and implies that $g$ is surjective.
Since $\Coh_{\leq 1}(X)[-1] \subset \mathscr{A}_{\mu}$ is closed under subobjects, extensions, and quotients, and $Q$ is zero-dimensional, we deduce that the above exact sequence lies in $\mathcal C_\infty$ entirely.
But there are no morphisms in negative degree, so $\coker(f) = 0$ since $\mathcal F \in \Coh_{\mu}(X)$ is a sheaf.
We obtain the exact sequence
\[
	0 \to H^0(E) \to \F \to \ker(f)[1] \to 0
\]
in $\Coh(X)$, proving that $H^0(E) \in \mathcal Q_{\DT}^{\mathcal F}$ as claimed.

Conversely, let $E$ be an extension of a (shifted) zero-dimensional object $P[-1]$ and an object $K \in \mathcal Q_{\DT}^{\mathcal F}$.
We obtain the diagram
\[
\begin{tikzcd}
	Q[-1] \arrow[r,hook] \arrow[d,"f'"] & K \arrow[r,two heads] \arrow[d,hook] & \mathcal F \arrow[d,"g'"]\\
    S[-1] \arrow[r,hook] & E \arrow[r,two heads] & J^\bullet
\end{tikzcd}
\]
in $\mathscr{A}_{\mu}$, where $S[-1] \subset E$ is the largest subobject of $E$ in $\Coh_0(X)[-1]$; this object exists since $\catA_{\mu}$ is noetherian and $\mathcal C_\infty \subset \catA_{\mu}$ is closed under extensions and quotients.
In particular, it follows that $\Hom(\mathcal{C}_{\infty},J^{\bullet}) = 0$.
The snake lemma in $\mathscr{A}_{\mu}$ induces a four-term exact sequence
\[
0 \to \ker(g') \to \coker(f') \to P[-1] \to \coker(g') \to 0
\]
and implies that $f'$ is injective.
As before, we deduce that the above exact sequence lies in $\mathcal C_\infty$ entirely.
By assumption $\mathcal F$ is both a DT and PT object, hence $\Hom(\mathcal C_\infty,\mathcal F) = 0$ and so $\ker(g') = 0$.
We obtain the exact sequence
\[
	0 \to \mathcal F \to J^{\bullet} \to \coker(g') \to 0
\]
in $\mathscr{A}_{\mu}$, proving that $J^\bullet \in \mathcal Q_{\PT}^{\mathcal F}$.
The constructions are clearly inverse to each other.
\end{proof}

We now refine the above identification to a Hall algebra identity. 

\begin{prop}\label{HallIdentity2}
In $\widehat H_{r,D}(\mathscr A_\mu)$, one has the identity
\begin{equation}\label{eq:Hall_Identity_Flocal}
\delta^{\F}_{\DT}\star \delta(\mathcal C_\infty) = \delta(\mathcal C_\infty) \star \delta^{\F}_{\PT}.
\end{equation}
\end{prop}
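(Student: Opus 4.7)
The plan is to upgrade the equivalence of $\C$-valued points established in Lemma~\ref{Closed_Points_13} to a geometric bijection at the level of stacks. Since both sides of \eqref{eq:Hall_Identity_Flocal} are classes in $K(\St/\catA_\mu)$ and relation~(2) in Definition~\ref{def:Relative_Grothendieck_Group_of_Stacks} identifies classes under geometric bijection, such an identification of stacks (over $\catA_\mu$) would immediately yield the desired equality. Unpacking the definitions, the stack $\mathcal Q_{\DT}^{\F}\star \mathcal C_\infty$ parametrises, functorially over a test scheme $B$, short exact sequences $0\to E_1\to E\to E_2\to 0$ in $\catA_\mu(B)$ with $E_1\in \mathcal Q_{\DT}^{\F}(B)$ and $E_2\in \mathcal C_\infty(B)$, mapping to $\catA_\mu$ via the middle term $E$; the right-hand side is described symmetrically.

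Given a $B$-point of $\mathcal C_\infty\star \mathcal Q_{\PT}^{\F}$, namely an exact triangle $Q[-1]\to E\to J^\bullet$ in $\catA_\mu(B)$, the associated $B$-point of $\mathcal Q_{\DT}^{\F}\star \mathcal C_\infty$ should be obtained by extracting the subobject $K\into E$ with $K=H^0(E)$, the zeroth cohomology sheaf computed with respect to the standard $t$-structure on $D^b(X\times B)$. At each closed point $b\in B$, the long exact cohomology sequence of the fibre triangle reads
\[
0\to H^0(E_b)\to \F\to Q_b\to H^1(E_b)\to H^1(J^\bullet_b)\to 0,
\]
reproducing the fibrewise snake-lemma computation of Lemma~\ref{Closed_Points_13} and identifying $K_b=H^0(E_b)$ with the kernel of the surjection $\F\onto Q_b$. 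This exhibits $K$ as a $B$-family of subsheaves of $\F$ with zero-dimensional quotient, hence a $B$-point of $\Quot_X(\F)$, and, via base change along $\phi_{\F}$ from Proposition~\ref{DTembedding}, a $B$-point of $\mathcal Q_{\DT}^{\F}$. The quotient $E/K$ in $\catA_\mu(B)$ is then fibrewise in $\mathcal C_\infty$, producing the required sequence on the left-hand side. The inverse construction runs symmetrically, using Proposition~\ref{PTembedding} to identify the relevant quotient with an $\F$-local PT pair.

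The principal obstacle is justifying the above construction in families: one needs $K$ to be $B$-flat, the map $K\into \pi_X^\ast\F$ to have a $B$-flat, fibrewise zero-dimensional cokernel, and the complementary quotient $E/K$ to define a family in $\mathcal C_\infty$. To achieve this I would invoke Bridgeland's flatness criterion recalled in Remark~\ref{flatness}, which reduces $B$-flatness of a perfect complex to the vanishing of negative cohomology sheaves on fibres; combined with the fact that $\catA_\mu\subset \mathscr M_X$ is open and that $\Coh_{\leq 1}(X)[-1]\subset \catA_\mu$ is closed under subobjects, extensions, and quotients, each of these checks becomes fibrewise. The pointwise uniqueness inherent in the snake-lemma argument of Lemma~\ref{Closed_Points_13} then shows that the two morphisms so constructed are mutual inverses, producing the geometric bijection of $\catA_\mu$-stacks that implies the Hall algebra identity \eqref{eq:Hall_Identity_Flocal}.
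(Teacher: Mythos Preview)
There is a genuine gap. Your assertion that ``the inverse construction runs symmetrically'' is unjustified: in Lemma~\ref{Closed_Points_13}, the DT-to-PT direction extracts the \emph{largest} subobject $S[-1]\subset E$ lying in $\Coh_0(X)[-1]$, which exists by noetherianity of $\catA_\mu$ but is not a cohomology functor for any t-structure and has no evident functoriality in families. Even the PT-to-DT direction via $K=H^0(E)$ is not clean: from the triangle $Q[-1]\to E\to J^\bullet$ on $X\times B$ one obtains $0\to H^0(E)\to\pi_X^\ast\F\to Q$, but the image of $\pi_X^\ast\F\to Q$ need not be $B$-flat, so neither $K$ nor the cokernel $\pi_X^\ast\F/K$ is obviously a flat family of zero-dimensional quotients. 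Remark~\ref{flatness} does not resolve this: Bridgeland's criterion characterises when a perfect complex is a flat family of objects in a heart, not when its individual standard cohomology sheaves are flat over the base.

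The paper sidesteps all of this by a different route. It recalls Toda's global geometric bijections $\mathcal M_{\DT}(r,D)\star\mathcal C_\infty\to\widetilde{\mathscr B}_\mu\leftarrow\mathcal C_\infty\star\mathcal M_{\PT}(r,D)$, so the needed family-level morphisms are already supplied. Pulling back the closed immersions of Propositions~\ref{DTembedding} and~\ref{PTembedding} embeds $\mathcal Q_{\DT}^{\F}\star\mathcal C_\infty$ and $\mathcal C_\infty\star\mathcal Q_{\PT}^{\F}$ as closed substacks; after passing to reductions (harmless in the Hall algebra, since $S_{\mathrm{red}}\to S$ is a geometric bijection), each maps by a geometric bijection onto a reduced closed substack $\mathscr Z_{\DT},\mathscr Z_{\PT}\subset\widetilde{\mathscr B}_\mu$. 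By Kresch's theorem, $\widetilde{\mathscr B}_\mu$ is locally a quotient $[V/G]$ with $V$ a variety, so $\mathscr Z_{\DT}$ and $\mathscr Z_{\PT}$ correspond locally to reduced $G$-invariant closed subvarieties of $V$; being reduced, these are determined by their $\C$-points, which coincide by Lemma~\ref{Closed_Points_13}. Hence $\mathscr Z_{\DT}=\mathscr Z_{\PT}$, and both sides of~\eqref{eq:Hall_Identity_Flocal} equal $[\mathscr Z_{\DT}\to\catA_\mu]$. The upshot is that nothing beyond the $\C$-point comparison is needed, and no new family construction has to be carried out.
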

\begin{proof}
Recall the open immersion of stacks $\mathscr B_{\mu} \subset \mathscr A_{\mu}$.
The key Hall algebra identity proven by Toda relies on the existence of geometric bijections
\be\label{amustacks}
\begin{tikzcd}
\mathcal M_{\DT}(r,D)\star \mathcal C_\infty \arrow{dr} & & \mathcal C_\infty \star \mathcal M_{\PT}(r,D)\arrow{dl} \\
& \widetilde{\mathscr B}_{\mu} & 
\end{tikzcd}
\ee
where $\widetilde{\mathscr B}_{\mu}$ is the open substack of $\mathscr B_\mu$ parametrising objects $E\in\mathscr B_\mu$ such that $H^1(E)\in \Coh_0(X)$. 
Pulling back the closed immersions $\phi_{\mathcal F}$ and $\psi_{\mathcal F}$, the stacks $\mathcal Q_{\DT}^{\mathcal F}\star \mathcal C_\infty$ and $\mathcal C_\infty\star \mathcal Q_{\PT}^{\mathcal F}$ embed as closed substacks of the corresponding $\widetilde{\mathscr B}_{\mu}$-stacks \eqref{amustacks}.
Recall that the class of an $\catA_{\mu}$-stack $S \to \catA_{\mu}$ in the Hall algebra is equal to the class of its reduction,
\[
\left[S_{\text{red}} \to \catA_{\mu} \right] = \left[S \to \catA_{\mu} \right],
\]
via the geometric bijection $S_{\text{red}} \to S$ of $\catA_{\mu}$-stacks.
Thus we may assume that $\mathcal Q_{\DT}^{\mathcal F}\star \mathcal C_\infty$ and $\mathcal C_\infty\star \mathcal Q_{\PT}^{\mathcal F}$ are reduced.
Let $\mathscr Z_{\DT}$ and $\mathscr Z_{\PT}$ denote their stack-theoretic images in $\widetilde{\catB}_{\mu}$, which are reduced closed substacks.
\[
\begin{tikzcd}
{\mathscr Z}_{\DT} \arrow[swap,hook]{dr} & & {\mathscr Z}_{\PT} \arrow[left hook->]{dl} \\
& \widetilde{\mathscr B}_{\mu} & 
\end{tikzcd}
\]
We claim that ${\mathscr Z}_{\DT} = {\mathscr Z}_{\PT}$ as substacks.
This establishes the identity~\eqref{eq:Hall_Identity_Flocal} because, crucially, we have $\delta_{\DT}^{\mathcal{F}} \star \delta(\mathcal{C}_{\infty}) = [\mathscr{Z}_{\DT} \to \catA_{\mu}]$ in $\widehat{H}_{r,D}(\catA_{\mu})$ by the geometric bijection $\mathcal{Q}_{\DT}^{\F} \star \mathcal{C}_{\infty} \to \mathscr{Z}_{\DT}$ over $\widetilde{\mathscr B}_{\mu} \subset \catA_{\mu}$; similarly, we have the identity $\delta(\mathcal{C}_{\infty}) \star \delta_{\PT}^{\mathcal{F}} = [\mathscr{Z}_{\PT} \to \catA_{\mu}]$.

To prove this claim, recall that the stack $\catA_{\mu}$ is locally of finite type.
Since $\catA_{\mu} \subset D^b(X)$ is the heart of a bounded t-structure, it follows that the stack $\catA_{\mu}$ has affine geometric stabilisers \cite[Lemma~2.3.9]{SjoerdThesis}.
By a result of Kresch \cite[\textsection~4.5]{kreschcycle}, it is locally a global quotient stack $[V/G]$ where $V$ is a variety and $G$ is a linear algebraic group.
Thus locally ${\mathscr Z}_{\DT}$ and ${\mathscr Z}_{\PT}$ correspond to $G$-invariant closed subvarieties of $V$.
But the $\C$-valued points of these (reduced) closed subvarieties coincide by Lemma~\ref{Closed_Points_13}.
Thus ${\mathscr Z}_{\DT} = {\mathscr Z}_{\PT}$ as claimed. 
\end{proof}

\subsection{The $\mathcal F$-local DT/PT correspondence}\label{DTPTcorrespondence}
In this section, we prove Theorem \ref{thm2} and Theorem \ref{thm3}.
For $n\geq 0$, we define
\begin{align*}
\DDT_{\mathcal F,n} &= \chi(\Quot_X(\mathcal F,n),\nu_{\DT}) \\
\PPT_{\mathcal F,n} &= \chi(\Quot_X(\lExt^1(\mathcal F,\O_X),n),\nu_{\PT})
\end{align*}
where the Behrend weights come from the full DT and PT moduli spaces and are restricted via the closed immersions of Propositions \ref{DTembedding} and \ref{PTembedding}. 
We form the generating functions
\[
\DDT_{\mathcal F}(q) = \sum_{n\geq 0}\DDT_{\mathcal F,n}q^n,\quad \PPT_{\mathcal F}(q) = \sum_{n\geq 0} \PPT_{\mathcal F,n}q^n
\]
in the completed Poisson torus $\widehat C_{r,D}^{\sigma}(X)= \prod_{\gamma \in \Gamma_{r,D}}C_{\gamma}(X)$, where we use the shorthand $q = c_{(0,0,0,1)}$.
In the Calabi--Yau case, these series can be interpreted as the ``contribution'' of $\F$ to the global DT and PT invariants. 

Recall the Hall algebra elements \eqref{Main_HA_elements}.
Note that 
\be\label{Integration918}
I^B\left(\overline\delta_{\DT}^{\F}\right) = -\DDT_{\F}(q^{-1})c_{\gamma},\quad I^B\left(\overline\delta_{\PT}^{\F}\right) = -\PPT_{\F}(q^{-1}) c_{\gamma},
\ee
where $I^B$ is the Behrend weighted version of the map $I_{r,D}$ from \eqref{intmap}, and $\gamma = \ch(\mathcal{F})$.
The minus sign is a consequence of property \eqref{ChiSquare} of the Behrend function, taking into account that the moduli stacks are $\G_m$-gerbes over the coarse moduli spaces of DT and PT objects.

\begin{remark}
The Behrend weights $\nu_{\DT}$ and $\nu_{\PT}$ are not unrelated. Indeed, up to the $\G_m$-gerbes
\[
\mathcal M_{\DT}(r,D) \ra  M_{\DT}(r,D),\quad \mathcal M_{\PT}(r,D) \ra  M_{\PT}(r,D),
\]
they are both restrictions of the Behrend function of Lieblich's moduli stack $\mathscr M_X$, studied in \cite{Lieblich1}, along their respective open immersions $\mathcal M_{\DT}(r,D) \subset \mathscr{M}_X$ and $\mathcal M_{\PT}(r,D) \subset \mathscr{M}_X$.
\end{remark}

\begin{theorem}\label{thm:DTPT}
Let $X$ be a Calabi--Yau 3-fold, and let $\F$ be a $\mu_{\omega}$-stable sheaf of rank $r$ and homological dimension at most one.
There is an equality of generating series
\begin{equation}\label{FlocalDTPT}
\DDT_{\mathcal F}(q) = \mathsf M((-1)^rq)^{r\chi(X)}\cdot \PPT_{\mathcal F}(q).
\end{equation}
\end{theorem}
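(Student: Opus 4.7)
The starting point is the Hall algebra identity of Proposition \ref{HallIdentity2}, namely
\[
\delta^{\F}_{\DT}\star \delta(\mathcal C_\infty) = \delta(\mathcal C_\infty) \star \delta^{\F}_{\PT}
\]
in $\widehat H_{r,D}(\mathscr A_\mu)$. The plan is to apply the Behrend-weighted integration morphism $I_{r,D}$ of \eqref{intmap} to this identity in order to obtain the generating series relation \eqref{FlocalDTPT}, following the strategy Toda uses for the global correspondence \eqref{Todaformula} in \cite[Sec.~3.5]{Toda2}.

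To apply $I_{r,D}$, one needs regular classes in order to pass to the semi-classical limit. The scaled $\F$-local classes $\overline\delta^{\F}_{\DT},\overline\delta^{\F}_{\PT}$ are already regular by the $\G_m$-gerbe argument preceding \eqref{Main_HA_elements}. The class $\delta(\mathcal C_\infty)$ is group-like and is handled via its star-logarithm $\epsilon(\mathcal C_\infty) = \log_\star \delta(\mathcal C_\infty)$: by Bridgeland's no-pole theorem \cite{Bri} (adapted in \cite[Sec.~3.4]{Toda2}), the element $(\L-1)\cdot \epsilon(\mathcal C_\infty)$ descends to a class $\overline\epsilon \in \widehat H_\#^{\sc}(\mathscr A_\mu)$. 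Rewriting the Hall identity as the commutator relation
\[
[\delta^{\F}_{\DT},\delta(\mathcal C_\infty)]_\star = \delta(\mathcal C_\infty) \star (\delta^{\F}_{\PT} - \delta^{\F}_{\DT}),
\]
expanding $\delta(\mathcal C_\infty) = \exp_\star(\epsilon(\mathcal C_\infty))$, and passing to the semi-classical limit, I obtain a Poisson-bimodule identity of the form
\[
\bigl\{ \overline\epsilon,\, \overline\delta^{\F}_{\PT} \bigr\} = \overline\delta^{\F}_{\DT} - \overline\delta^{\F}_{\PT}
\]
(up to corrections from higher commutators in the expansion of $\mathrm{Ad}_{\exp_\star(\epsilon)}$, which are either absorbed by the exponential structure or vanish in the semi-classical limit).

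Applying $I_{r,D}$ and using the Poisson-bimodule compatibility \eqref{eq:Poisson_bimodule_morphism} together with the formulas \eqref{Integration918} converts the identity into a first-order linear ODE for the ratio $\DDT_\F(q)/\PPT_\F(q)$ in the completed quantum torus. The bracket on $\widehat C_{r,D}(X)$ is governed by the Euler pairing $\chi(v_0,\mathrm{ch}(\F)) = r$, where $v_0$ is the class of a point, producing the exponent $r$ on the right-hand side. The remaining ingredient is $I_\#(\overline\epsilon)$, the Behrend-weighted logarithmic generating series for shifted zero-dimensional sheaves; by Behrend-Fantechi \cite{BFHilb} this equals $\log \mathsf M(-q)^{\chi(X)}$. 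Integrating the ODE and exponentiating then yields $\DDT_\F(q) = \mathsf M((-1)^r q)^{r\chi(X)}\cdot \PPT_\F(q)$, with the shift $-q \mapsto (-1)^r q$ reflecting the $(-1)^{\chi(v_0,\mathrm{ch}(\F))}$ sign governing the $\sigma=-1$ quantum torus product.

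The main technical obstacle is verifying that the Behrend weights used to define $\DDT_\F,\PPT_\F$---pullbacks of the Behrend functions on the global moduli spaces $M_{\DT}(r,D),M_{\PT}(r,D)$ via the closed immersions $\phi_\F,\psi_\F$ of Propositions \ref{DTembedding} and \ref{PTembedding}---coincide with the weights that appear in $I^B$ applied to $\overline\delta^{\F}_{\DT},\overline\delta^{\F}_{\PT}$. By the remark following \eqref{Integration918}, both arise as restrictions of the Behrend function of Lieblich's stack $\mathscr M_X$ along the open inclusions $\mathcal M_{\DT}(r,D),\mathcal M_{\PT}(r,D) \subset \mathscr M_X$, up to the $\G_m$-gerbe shift that accounts for the minus signs in \eqref{Integration918}. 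With these identifications in place, the $\F$-local argument follows the same general shape as Toda's global proof of \eqref{Todaformula}.
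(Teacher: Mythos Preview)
Your overall strategy---start from Proposition~\ref{HallIdentity2}, take the star-logarithm $\epsilon(\mathcal C_\infty)$, invoke the no-pole theorem, pass to the semi-classical limit, apply $I_{r,D}$, and identify the wall-crossing factor via the Euler pairing and the Behrend--Fantechi computation---is exactly the paper's approach, and your remarks on the Behrend-weight compatibility are correct. But the central algebraic step is mishandled.

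The displayed identity
\[
\bigl\{ \overline\epsilon,\, \overline\delta^{\F}_{\PT} \bigr\} = \overline\delta^{\F}_{\DT} - \overline\delta^{\F}_{\PT}
\]
is \emph{false} in $\widehat H_{r,D}^{\sc}(\mathscr A_\mu)$, and the parenthetical about higher commutators being ``absorbed by the exponential structure or vanish[ing] in the semi-classical limit'' is precisely where the argument breaks. The higher iterated brackets do not vanish: the correct passage is to rewrite the Hall identity as the conjugation $\delta^{\F}_{\DT} = \delta(\mathcal C_\infty)\star \delta^{\F}_{\PT}\star \delta(\mathcal C_\infty)^{-1}$, apply the formula $\exp(\epsilon)\star x\star \exp(-\epsilon) = \exp(\Ad(\epsilon))\circ x$, multiply by $(\L-1)$, and project to the semi-classical quotient. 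Since $\Ad(\epsilon)(x) = (\L-1)^{-1}(\overline\epsilon\star x - x\star\overline\epsilon)$ becomes the Poisson bracket $\{\overline\epsilon,-\}$ in the quotient, one obtains
\[
\overline\delta^{\F}_{\DT} = \exp\bigl(\{\overline\epsilon,\,-\}\bigr)\circ \overline\delta^{\F}_{\PT},
\]
with the full exponential of the adjoint operator, not just its first-order truncation. Applying $I_{r,D}$ and using \eqref{eq:Poisson_bimodule_morphism} term by term then gives $\DDT_{\F}$ directly as an exponential factor times $\PPT_{\F}$; there is no ODE to integrate. Your first-order identity, taken at face value, would produce the wrong answer (a single bracket, not an exponential), and the subsequent ``integrating the ODE and exponentiating'' step has no clear meaning without a parameter to differentiate in.
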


\begin{proof}
The equality follows by applying the integration morphism $I_{r,D}$ from \eqref{intmap} taken with $\sigma = -1$, precisely as in the proof of \cite[Thm.~3.17]{Toda2}, after replacing the Hall algebra identity of \cite[Lemma~3.16]{Toda2} by the identity of equation~\eqref{eq:Hall_Identity_Flocal}.
We give the argument in full.

By Proposition \ref{HallIdentity2}, we obtain the identity
\[
\delta^{\mathcal F}_{\DT} =
\delta(\catC_{\infty}) \star \delta^{\mathcal F}_{\PT} \star \delta(\catC_{\infty})^{-1}\,\in \,\widehat{H}_{r,D}(\catA_{\mu}).
\]
Since $\Coh_0(X)$ is artinian, we have a well-defined logarithm  $\epsilon(\catC_{\infty}) = \log \delta(\catC_{\infty}) \in \widehat{H}_{\#}(\catA_{\mu})$.
We obtain in $\widehat{H}_{r,D}(\catA_{\mu})$ the identity
\[
\delta^{\mathcal F}_{\DT} =
\exp\left(\epsilon(\catC_{\infty})\right) \star \delta^{\mathcal F}_{\PT} \star \exp\left(-\epsilon(\catC_{\infty})\right).
\]
The $\widehat{H}_{\#}(\catA_{\mu})$-bimodule structure of $\widehat{H}_{r,D}(\catA_{\mu})$ induces an adjoint action of $a \in \widehat{H}_{\#}(\catA_{\mu})$ on $\widehat{H}_{r,D}(\catA_{\mu})$ via the equation
\[
\Ad(a) \circ x = a \star x - x \star a \colon \widehat{H}_{r,D}(\catA_{\mu}) \to \widehat{H}_{r,D}(\catA_{\mu}).
\]
By the Baker--Campbell--Hausdorff formula the above equation becomes
\[
\delta^{\mathcal F}_{\DT} =
\exp\bigl(\Ad(\epsilon(\catC_{\infty}))\bigr) \circ \delta^{\mathcal F}_{\PT}
\]
in $\widehat{H}_{r,D}(\catA_{\mu})$.
Multiplying both sides of the equation by $\L-1$ and projecting the resulting equation in $\widehat{H}^{\reg}_{r,D}(\catA_{\mu})$ to the semi-classical quotient $\widehat{H}^{\sc}_{r,D}(\catA_{\mu})$, we obtain the identity
\[
\overline{\delta}^{\mathcal F}_{\DT}  =
\exp\bigl(\Ad^{\sc}(\overline{\epsilon}(\catC_{\infty}))\bigr) \circ \overline{\delta}^{\mathcal F}_{\PT} ~\in~ \widehat{H}^{\sc}_{r,D}(\catA_{\mu}),
\]
where we have written the adjoint action of $a \in \widehat{H}^{\sc}_{\#}(\catA_{\mu})$ as
\[
\Ad^{\sc}(a) \circ x \defeq \{a,x\} \colon \widehat{H}^{\sc}_{r,D}(\catA_{\mu}) \to \widehat{H}^{\sc}_{r,D}(\catA_{\mu}),
\]
in terms of the Poisson bracket of equation~\eqref{eq:Poisson_Bracket}, and we have applied Joyce's No-Poles Theorem which states that $\overline{\epsilon}(\catC_{\infty}) = (\L - 1)\epsilon(\catC_{\infty}) \in \widehat{H}^{\reg}_{\#}(\catA_{\mu})$; cf.~\cite[Thm.~3.12]{Toda2}.

Applying the integration morphism $I_{r,D}$, and using the Euler pairing computation
\begin{align*}
\chi\bigl((0,0,-\beta',-n'),(r,D,-\beta,-n) \bigr) = rn'-D\beta',
\end{align*}
along with the identities \eqref{Integration918}, we obtain the formula
\begin{equation}\label{almostthere}
    \DDT_{\mathcal F}(q) = \exp\left( \sum_{m > 0} (-1)^{rm-1} rm \cdot \mathsf N_{m,0} q^m\right)\cdot \PPT_{\mathcal F}(q),
\end{equation}
after formally sending $q^{-1} \mapsto q$.
Here the ``$\mathsf N$-invariants'' $\mathsf N_{m,0} \in \Q$ count semistable zero-dimensional sheaves $E$ with $\chi(E)=m$, and are defined by the relation
\[
I_{\#}\left(\overline{\epsilon}(\catC_{\infty})\right) = - \sum_{m \geq 0} \mathsf N_{m,0} (q^{-1})^m.
\]
See \cite[Sec.~3.6]{Toda2} and the references therein for more details.
Using the rank one identity
\[
\exp\left( \sum_{m > 0} (-1)^{m-1} m \cdot \mathsf N_{m,0} q^m\right) = \mathsf M(-q)^{\chi(X)}
\]
established in \cite{BFHilb,LEPA,JLI}, the relation \eqref{almostthere} becomes precisely
\[
\DDT_{\mathcal F}(q) = \mathsf M((-1)^rq)^{r\chi(X)} \cdot \PPT_{\mathcal F}(q).
\]
This completes the proof.
\end{proof}

We now prove Theorem \ref{thm3}, namely the main result of \cite{Gholampour2017} in the special case of a stable sheaf $\F$. We do not require $X$ to be Calabi--Yau.
\begin{theorem}\label{thm:gk}
Let $X$ be a smooth projective $3$-fold, and let $\mathcal F$ be as in Theorem \ref{thm:DTPT}. 
Then
\be \label{GK937}
\sum_{n\geq 0}\chi(\Quot_X(\mathcal F,n))q^n = \mathsf M(q)^{r\chi(X)}\cdot \sum_{n\geq 0}\chi(\Quot_X(\lExt^1(\mathcal F,\O_X),n))q^n.
\ee
\end{theorem}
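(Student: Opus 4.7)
The strategy is to mirror the proof of Theorem \ref{thm:DTPT} line by line, replacing the Behrend-weighted integration morphism with its unweighted Euler characteristic counterpart, i.e.~the map $I_{r,D}$ from~\eqref{intmap} taken with the sign $\sigma = +1$; denote this map $I^E$ and its algebra-side cousin $I^E_\#$. The starting point is the Hall algebra identity
\[
\delta^{\mathcal F}_{\DT}\star \delta(\mathcal C_\infty) = \delta(\mathcal C_\infty) \star \delta^{\mathcal F}_{\PT}
\]
of Proposition~\ref{HallIdentity2}. Crucially, this identity was proved purely scheme-theoretically and does \emph{not} require the Calabi--Yau assumption, so it applies equally well in the setting of Theorem~\ref{thm:gk}.

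Next, I would take $\epsilon(\mathcal C_\infty) = \log\delta(\mathcal C_\infty)$ in the completed Hall algebra $\widehat H_\#(\mathscr A_\mu)$ (well-defined because $\Coh_0(X)$ is artinian), apply the Baker--Campbell--Hausdorff formula, multiply by $\L - 1$, and project to the semi-classical quotient. Using Joyce's No-Poles Theorem to guarantee that $\overline\epsilon(\mathcal C_\infty) \in \widehat H_\#^{\reg}(\mathscr A_\mu)$, this yields
\[
\overline\delta^{\mathcal F}_{\DT}
= \exp\bigl(\Ad^{\sc}(\overline\epsilon(\mathcal C_\infty))\bigr)\circ \overline\delta^{\mathcal F}_{\PT}
\in \widehat H_{r,D}^{\sc}(\mathscr A_\mu),
\]
exactly as in the proof of Theorem~\ref{thm:DTPT}.

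The main obstacle, and the only place where the non-Calabi--Yau hypothesis on $X$ could fail the argument, is that $I^E$ is \emph{not} in general a morphism of Poisson algebras. This is precisely addressed by Remark~\ref{rem:Integration_Morphisms}(1): because $\overline\epsilon(\mathcal C_\infty)$ is supported on the substack of objects whose Chern character is that of a (shifted) zero-dimensional sheaf, one has $I^E(\{\overline\epsilon(\mathcal C_\infty),x\}) = \{I^E_\#(\overline\epsilon(\mathcal C_\infty)),I^E(x)\}$ for all $x$ in the bimodule $\widehat H_{r,D}^{\sc}(\mathscr A_\mu)$, and inductively for every nested Poisson bracket appearing in the series expansion of $\exp(\Ad^{\sc}(\overline\epsilon(\mathcal C_\infty)))$. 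This is exactly the weaker compatibility that replaces the Calabi--Yau Poisson property used in Theorem~\ref{thm:DTPT}.

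Applying $I^E$ to both sides, identifying $I^E(\overline\delta^{\mathcal F}_{\DT})$ and $I^E(\overline\delta^{\mathcal F}_{\PT})$ with the Euler characteristic generating series $\sum_n \chi(\Quot_X(\mathcal F,n))q^n$ and $\sum_n \chi(\Quot_X(\lExt^1(\mathcal F,\O_X),n))q^n$ respectively (the gerbe sign factors of~\eqref{Integration918} appear on both sides and cancel), and using the Euler pairing formula $\chi((0,0,-\gamma,-m),(r,D,-\beta,-n)) = rm - D\gamma$ evaluated on the zero-dimensional slice (where $\gamma = 0$, giving $rm$), one obtains after the formal substitution $q^{-1}\mapsto q$ an identity of the form
\[
\sum_{n\geq 0}\chi(\Quot_X(\mathcal F,n))\,q^n
= \exp\!\Biggl(\sum_{m>0} rm\cdot \mathsf N^E_{m,0}\,q^m\Biggr)\cdot \sum_{n\geq 0}\chi(\Quot_X(\lExt^1(\mathcal F,\O_X),n))\,q^n,
\]
where $\mathsf N^E_{m,0}$ is the unweighted counterpart of the N-invariant, defined via $I^E_\#(\overline\epsilon(\mathcal C_\infty))$. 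Note the \emph{absence} of the signs $(-1)^{rm}$ that arose in Theorem~\ref{thm:DTPT}: these came from $\sigma^{\chi(v_1,v_2)}$ in the Poisson bracket of the quantum torus with $\sigma = -1$, and are now trivial since the quantum torus with $\sigma = +1$ is commutative without signs. The proof concludes with the rank-one unweighted identity
\[
\exp\!\Biggl(\sum_{m>0} m\cdot \mathsf N^E_{m,0}\,q^m\Biggr) = \mathsf M(q)^{\chi(X)},
\]
which is equivalent to G\"ottsche's classical formula $\sum_n\chi(\Hilb^n X)q^n = \mathsf M(q)^{\chi(X)}$ (the case $\mathcal F = \O_X$, where $\lExt^1(\O_X,\O_X) = 0$ makes the PT factor trivial). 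Raising this identity to the power $r$ delivers the desired formula~\eqref{GK937}.
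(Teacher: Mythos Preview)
Your proposal is correct and follows essentially the same approach as the paper's proof, which simply says to rerun the argument of Theorem~\ref{thm:DTPT} using the Hall algebra identity of Proposition~\ref{HallIdentity2} and replacing $I^B$ by $I^E$, with the Poisson compatibility supplied by Remark~\ref{rem:Integration_Morphisms}(1). You have accurately spelled out the details the paper leaves implicit, including the reason the Calabi--Yau hypothesis can be dropped, the disappearance of the signs when $\sigma = +1$, and the identification of the rank-one input with G\"ottsche's formula.
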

\begin{proof}
The equality follows from the proof of \cite[Thm.~3.17]{Toda2} by replacing the Hall algebra identity of \cite[Lemma~3.16]{Toda2} by the identity of equation~\eqref{eq:Hall_Identity_Flocal} \emph{and} by replacing the Behrend weighted integration morphism $I^B$ by the integration morphism $I^E$ taking Euler characteristics.
The compatibility of $I^E$ with the Poisson brackets is explained in Remark~\ref{rem:Integration_Morphisms}.
\end{proof}
\begin{remark}
In \cite{Gholampour2017}, formula \eqref{GK937} is obtained by reducing to the affine case, and carrying out an inductive procedure on the rank.
The base case of rank two, established on an affine $3$-fold, relies on the existence of an auxiliary \emph{cosection} $\mathcal F\ra \O_X$.

The above result yields an interpretation of equation~\eqref{GK937} as the Euler characteristic shadow of the $\F$-local higher rank DT/PT correspondence \eqref{FlocalDTPT}. This question was raised by Gholampour and Kool in \cite[Sec.~1]{Gholampour2017}.

Note that our proof only produces the formula for $\mu_{\omega}$-stable sheaves $\F$ (of homological dimension at most one).
This is a consequence of producing the identity \eqref{eq:Hall_Identity_Flocal} in the Hall algebra of $\catA_{\mu}$.
In contrast, \cite{Gholampour2017} proves formula \eqref{GK937} for \emph{all} torsion free sheaves (of homological dimension at most one).
\end{remark}

\begin{remark}\label{rmk:cyclelocal}
Let $X$ be a Calabi--Yau $3$-fold, $C\subset X$ a Cohen--Macaulay curve, and $\F = \mathscr I_C$ of $\ch(\mathscr I_C) = (1,0,-\beta,p_a(C)-1)$.
The \emph{cycle-local} invariants of \cite[Sec.~4]{Ob1} may in general differ from those of Theorem \ref{thm:DTPT}.
Indeed, the former express the contribution of all ideal sheaves $\mathscr I_{Z}$ such that $[Z] = [C]$ in $\Chow_1(X,\beta)$, and this condition is in general weaker than having an inclusion $\mathscr I_Z\into \mathscr I_C$.
The two types of invariants do agree when $C$ is smooth: in this case, \cite[Thm. 2.1]{Ricolfi2018} proves that $\Quot_X(\mathscr I_C,n)$ is precisely the fibre of the Hilbert--Chow morphism over the cycle of $C$. They also agree when $\beta = \ch_2(\O_C)$ is irreducible, see Proposition \ref{prop:cycleVSframed}.

Also the method of proof differs: in \cite{Ob1}, the author restricts Bridgeland's global DT/PT identity to the Hall subalgebra of the abelian category of sheaves supported on $C$ in dimension one, whereas in this paper an $\mathscr I_C$-local identity is established directly in the Hall algebra of $\mathscr A_\mu$.
We emphasise, however, that both local proofs make use of the established proof of the global correspondence in some way. 
\end{remark}

\section{Applications}
In this section we discuss a few results and special cases, directly linked to Theorem~\ref{thm:DTPT}, concerning the PT series $\PPT_{\F}$ and its properties.
Throughout, as before, $X$ is a smooth projective 3-fold and $\mathcal{F}$ is a $\mu_{\omega}$-stable sheaf of homological dimension at most one.

\subsection{Tensoring by a line bundle}
We establish a general relation between the generating functions of $\mathcal F$-local and of $(\mathcal F \otimes L)$-local PT invariants, where $L$ is a line bundle on $X$.
\begin{prop}\label{prop:PT_tensor_by_line_bundle}
    We have $\PPT_{\mathcal F \otimes L}(q) = \PPT_{\mathcal F}(q)$ for every line bundle $L$ on $X$.
\end{prop}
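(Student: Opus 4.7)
The plan is to exploit that tensoring by $L$ is an auto-equivalence of $D^b(X)$ that carries PT pairs to PT pairs and intertwines the two closed immersions $\psi_{\F}$ and $\psi_{\F \otimes L}$ of Proposition~\ref{PTembedding}. Since Behrend functions are intrinsic invariants preserved by isomorphisms of schemes, this is all that is needed.

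First I would set up the basic identifications coming from the projection formula: there are natural isomorphisms $\lExt^1(\F\otimes L,\O_X)\cong \lExt^1(\F,\O_X)\otimes L^{-1}$, $(\F\otimes L)^{\vee}\cong \F^{\vee}\otimes L^{-1}$, and $(Q\otimes L)^{\mathrm D}\cong Q^{\mathrm D}\otimes L^{-1}$ for every $Q\in\Coh_0(X)$. Tensoring a quotient $[\lExt^1(\F\otimes L,\O_X)\twoheadrightarrow Q]$ with $L$ yields a quotient $[\lExt^1(\F,\O_X)\twoheadrightarrow Q\otimes L]$, and this operation extends to families, defining a natural scheme isomorphism
\[
\tau_L\colon \Quot_X(\lExt^1(\F\otimes L,\O_X),n) \xrightarrow{\,\sim\,} \Quot_X(\lExt^1(\F,\O_X),n).
\]
Dually, the three conditions in Definition~\ref{ptpair} are manifestly invariant under $-\otimes L$, so tensoring by $L$ induces an isomorphism of moduli spaces $\Phi_L\colon M_{\PT}(r,D)\xrightarrow{\sim} M_{\PT}(r,D+rc_1(L))$.

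Second, I would check that the square
\[
\begin{tikzcd}
\Quot_X(\lExt^1(\mathcal F \otimes L, \O_X), n) \arrow{r}{\psi_{\F \otimes L}} \arrow[swap]{d}{\tau_L} & M_{\PT}(r, D + r c_1(L)) \\
\Quot_X(\lExt^1(\mathcal F, \O_X), n) \arrow{r}{\psi_{\F}} & M_{\PT}(r, D) \arrow[swap]{u}{\Phi_L}
\end{tikzcd}
\]
commutes by tracking a surjection $t\colon \lExt^1(\F\otimes L,\O_X)\twoheadrightarrow Q$ along both compositions. The point is that the truncation triangle $H^0(\F^{\vee})\to\F^{\vee}\to\lExt^1(\F,\O_X)[-1]$ and the derived-duality identification \eqref{homs617} that together define $\psi_{\F}$ are all compatible with $-\otimes L$, which ultimately reduces to the projection formula. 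Hence $\psi_{\F\otimes L}(t) \cong \psi_{\F}(\tau_L(t))\otimes L = \Phi_L(\psi_{\F}(\tau_L(t)))$, and the same argument works in families.

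Finally, because $\Phi_L$ is an isomorphism of schemes, $\Phi_L^{\ast}\nu_{\PT} = \nu_{\PT}$, so the commutative square yields $\tau_L^{\ast}\psi_{\F}^{\ast}\nu_{\PT} = \psi_{\F\otimes L}^{\ast}\nu_{\PT}$. Taking weighted Euler characteristics and using that $\tau_L$ is an isomorphism produces $\PPT_{\F\otimes L,n} = \PPT_{\F,n}$ for every $n\geq 0$, proving the proposition. I expect the main obstacle to be the careful bookkeeping in the commutative square, namely verifying that every natural isomorphism appearing in the construction of $\psi_{\F}$ is $\otimes L$-equivariant; but this is a routine application of the projection formula once the $(-)^{\vee}$-computations above are in hand.
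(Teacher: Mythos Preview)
Your proposal is correct and follows essentially the same route as the paper: set up an isomorphism of Quot schemes via tensoring by a line bundle, an isomorphism of PT moduli spaces via $-\otimes L$, verify the resulting square with $\psi_{\F}$ and $\psi_{\F\otimes L}$ commutes (which reduces to compatibility of the derived dual and the truncation triangle with $-\otimes L$), and conclude by pulling back the Behrend function along these isomorphisms. The only cosmetic differences are the direction of the tensoring maps and that the paper briefly checks the hypotheses on $\F\otimes L$ (stability, homological dimension, coprimality) before invoking $\psi_{\F\otimes L}$, which you leave implicit.
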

\begin{proof}
Since $\mathcal F$ is a $\mu_{\omega}$-stable sheaf of homological dimension at most 1, the same holds for $\mathcal F \otimes L$. 
Moreover, $(r,D \cdot \omega^2)$ are coprime so $(r,(rc_1(L) + D)\cdot \omega^2)$ are coprime as well.

Let $t \colon \lExt^1(\mathcal F,\O_X) \onto Q$ be a zero-dimensional quotient.
Tensoring by $L^{-1}$ induces the identification
\[
\begin{tikzcd}
\lExt^1(\mathcal F,\O_X) \otimes L^{-1} \arrow[two heads]{r}\isoarrow{d} & Q \otimes L^{-1} \arrow[equal]{d} \\
\lExt^1(\mathcal F \otimes L,\O_X) \arrow[two heads]{r} & Q \otimes L^{-1} 
\end{tikzcd}
\]
using that Ext-sheaves are local; note that $Q$ is zero-dimensional so $Q \otimes L^{-1} \cong Q$, but for reasons of naturality we do not choose an isomorphism here.
Tensoring by line bundles behaves well in flat families, so we obtain an isomorphism
\[
- \otimes L^{-1} \colon \Quot_X(\lExt^1(\F,\O_X),n) \,\widetilde{\ra}\, \Quot_X(\lExt^1(\mathcal F \otimes L,\O_X),n).
\]

We claim that the following diagram
\begin{equation}\label{eq:Tensoring_PTmoduli}
\begin{tikzcd}[row sep=large]
\Quot_X(\lExt^1(\F,\O_X),n) \ar[hook,swap]{d}{\psi_{\mathcal F}} \ar{r}{- \otimes L^{-1}} & \Quot_X(\lExt^1(\mathcal F \otimes L,\O_X),n) \ar[hook]{d}{\psi_{\mathcal{F} \otimes L}} \\
M_{\PT}(r,D) \ar{r}{- \otimes L} & M_{\PT}(r,D+rc_1(L))
\end{tikzcd}
\end{equation}
commutes. Our closed immersion into the PT moduli space proceeds by dualising
\[
\bar{t} \colon \mathcal{F}^{\vee} \to \lExt^1(\mathcal F,\O_X)[-1] \to Q[-1].
\]
To obtain the corresponding $\F$-local PT pair, we dualise \emph{again} to obtain the extension
\[
\mathcal{F} \into J^{\bullet} \onto Q^{\textrm{D}}[-1],
\]
which corresponds to $\bar{t}$ under $\Hom(\mathcal F^{\vee},Q[-1]) \cong \Ext^1(Q^{\textrm{D}}[-1],\mathcal F )$.
Tensoring this exact sequence by the line bundle $L$ yields an $(\mathcal{F} \otimes L)$-local PT pair $J^{\bullet} \otimes L$, namely
\[
\mathcal{F} \otimes L \into J^{\bullet} \otimes L \onto Q^{\textrm{D}} \otimes L[-1].
\]

We also obtain a PT pair by performing the operations the other way around.
Indeed, first tensoring $\bar t$ by the line bundle $L^{-1}$ yields the morphism
\[
\bar{t}_L \colon \mathcal{F}^{\vee} \otimes L^{-1} \cong (\mathcal{F} \otimes L)^{\vee} \to \lExt^1(\mathcal F \otimes L,\O_X)[-1] \to (Q \otimes L^{-1})[-1].
\]
To obtain the corresponding local PT pair, we dualise \emph{again} to obtain the extension
\[
\mathcal{F} \otimes L \into J^{\bullet}_L \onto Q^{\textrm{D}} \otimes L[-1],
\]
corresponding to $\bar{t}_L$ under $\Hom((\mathcal F \otimes L)^{\vee},(Q \otimes L^{-1})[-1]) \cong \Ext^1(Q^{\textrm{D}} \otimes L[-1],\mathcal F \otimes L)$.
We claim that $J^{\bullet}_L$ and $J^{\bullet} \otimes L$ are canonically isomorphic $(\mathcal{F} \otimes L)$-PT pairs.

To see this, consider the following diagram of canonical and commuting isomorphisms:
\[
\begin{tikzcd}
\Hom(\lExt^1(\mathcal F,\O_X),Q) \ar[hook]{d} \ar[equal]{r}{-\otimes L^{-1}} & \Hom(\lExt^1(\mathcal F \otimes L,\O_X), Q \otimes L^{-1}) \ar[hook]{d} \\
\Hom(\mathcal F^{\vee},Q[-1]) \ar[equal,swap]{d}{(-)^{\vee}} \ar[equal]{r}{- \otimes L^{-1}} & \Hom((\mathcal F \otimes L)^{\vee}, Q \otimes L^{-1}[-1]) \ar[equal]{d}{(-)^{\vee}}  \\
\Ext^1(Q^{\textrm{D}}[-1],\mathcal F) \ar[equal]{r}{- \otimes L} & \Ext^1(Q^{\textrm{D}} \otimes L[-1], \mathcal F \otimes L)
\end{tikzcd}
\]
which proves that the operations indeed commute, so $J^{\bullet}_L \cong J^{\bullet} \otimes L$ canonically, because the extensions are equal.
Thus the diagram displayed in \eqref{eq:Tensoring_PTmoduli} commutes.

As a consequence, pulling back the Behrend functions of $M_{\PT}(r,D)$ and $M_{\PT}(r,D+rc_1(L))$ to either moduli space induces the \emph{same} constructible function on the isomorphic Quot schemes.
In particular, their Behrend weighted Euler characteristics are equal, which means that $\PPT_{\mathcal F,n} = \PPT_{\mathcal F \otimes L,n}$ for all $n \in \Z$.
We infer $\PPT_{\mathcal F}(q) = \PPT_{\mathcal F \otimes L}(q)$ as claimed.
\end{proof}

Consider the generating function of topological Euler characteristics
\[
\widehat{\PPT}_{\mathcal{F}}(q) = \sum_{n\geq 0}\chi(\Quot_X(\lExt^1(\mathcal F,\O_X),n))q^n.
\]
The above proof directly carries over to the Euler characteristic setting.
\begin{corollary}
We have $\widehat{\PPT}_{\mathcal{F} \otimes L}(q) = \widehat{\PPT}_{\mathcal{F}}(q)$ for every line bundle $L$ on $X$.
\end{corollary}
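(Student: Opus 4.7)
The plan is to reuse the key construction from the proof of Proposition~\ref{prop:PT_tensor_by_line_bundle}, in which the operation $-\otimes L^{-1}$ was shown to define an \emph{isomorphism of schemes}
\[
-\otimes L^{-1}\colon \Quot_X(\lExt^1(\mathcal F,\O_X),n)\,\widetilde{\ra}\, \Quot_X(\lExt^1(\mathcal F\otimes L,\O_X),n).
\]
This isomorphism came from the locality of Ext-sheaves, giving $\lExt^1(\mathcal F,\O_X)\otimes L^{-1}\cong \lExt^1(\mathcal F\otimes L,\O_X)$, together with the fact that tensoring by a line bundle is well behaved in flat families. Importantly, the construction is purely scheme-theoretic and does not involve the ambient PT moduli space or any constructible function on it.

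Since the topological Euler characteristic is an invariant of the isomorphism class of a scheme, the above isomorphism immediately implies
\[
\chi(\Quot_X(\lExt^1(\mathcal F,\O_X),n)) = \chi(\Quot_X(\lExt^1(\mathcal F\otimes L,\O_X),n))
\]
for every $n\geq 0$. Summing these equalities into generating series yields $\widehat{\PPT}_{\mathcal F\otimes L}(q) = \widehat{\PPT}_{\mathcal F}(q)$, which is the claim. Unlike in the Behrend-weighted setting of Proposition~\ref{prop:PT_tensor_by_line_bundle}, we do not need to verify commutativity of the square~\eqref{eq:Tensoring_PTmoduli} involving the closed immersions $\psi_{\F}$ and $\psi_{\F\otimes L}$, because the unweighted Euler characteristic is intrinsic to the Quot scheme and requires no pullback of a constructible function. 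Consequently, the entire content of the corollary is already contained in the scheme-theoretic step of the previous proof, and there is no real obstacle to surmount.
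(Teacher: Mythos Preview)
Your proposal is correct and follows exactly the paper's approach: the paper simply says ``The above proof directly carries over to the Euler characteristic setting,'' and you have spelled out precisely why---the isomorphism of Quot schemes from Proposition~\ref{prop:PT_tensor_by_line_bundle} already gives equality of topological Euler characteristics, with no need for the Behrend-function compatibility argument.
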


\subsection{Special cases}
Let $X$ be a smooth projective Calabi--Yau 3-fold.
We collect some special cases of Theorem~\ref{thm:DTPT} by imposing further restrictions on the sheaf $\mathcal{F}$.
\begin{corollary}\label{cor101}
With the assumptions of Theorem~\ref{thm:DTPT}, if $\F$ is locally free then the generating series of $\mathcal F$-local PT invariants is trivial: $\PPT_{\mathcal F}(q) = 1$.
In particular, 
\[
\DDT_{\F}(q) = \mathsf M((-1)^rq)^{r\chi(X)}.
\]
\end{corollary}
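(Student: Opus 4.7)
The plan is to observe that local freeness of $\mathcal{F}$ forces the ``PT side'' of the $\mathcal{F}$-local correspondence to collapse completely, and then invoke Theorem~\ref{thm:DTPT} to extract the value of $\DDT_{\mathcal{F}}(q)$.

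First, I would use the fact that for a locally free sheaf $\mathcal{F}$ on a smooth variety $X$, the higher local Ext sheaves vanish: $\lExt^i(\mathcal{F}, \O_X) = 0$ for all $i \geq 1$. This is standard because $\mathcal{F}$ admits a one-term locally free resolution (itself), so all higher derived functors of $\lHom(\mathcal{F},-)$ vanish. In particular, $\lExt^1(\mathcal{F}, \O_X) = 0$.

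Next, I would note that $\Quot_X(0, n) = \emptyset$ for $n > 0$ (there are no nonzero quotients of the zero sheaf), while $\Quot_X(0, 0)$ consists of the single trivial quotient, i.e.~a reduced point. Consequently, the Behrend-weighted counts give $\PPT_{\mathcal{F}, n} = 0$ for $n > 0$ and $\PPT_{\mathcal{F}, 0} = 1$, so the generating series reduces to $\PPT_{\mathcal{F}}(q) = 1$. Substituting this into formula \eqref{FlocalDTPT} of Theorem~\ref{thm:DTPT} immediately yields $\DDT_{\mathcal{F}}(q) = \mathsf{M}((-1)^r q)^{r\chi(X)}$.

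There is no real obstacle here; the corollary is essentially a direct specialisation of Theorem~\ref{thm:DTPT} once one notices the vanishing of $\lExt^1(\mathcal{F}, \O_X)$. The only point deserving a brief remark is that this is consistent with (and indeed reproves, in the projective Calabi--Yau case) the direct computation of $\DDT_{\mathcal{F}}(q)$ carried out via the critical locus description in Theorem~\ref{Thm:LocallyFree}, providing a sanity check: the $\mathcal{F}$-local DT series of a locally free sheaf is entirely accounted for by the wall-crossing factor, with no contribution from a ``PT remainder''.
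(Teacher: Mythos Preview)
Your proof is correct and follows essentially the same route as the paper: observe $\lExt^1(\mathcal F,\O_X)=0$, deduce $\PPT_{\mathcal F}(q)=1$ from the triviality of the relevant Quot schemes, and substitute into Theorem~\ref{thm:DTPT}. One small caveat on your closing remark: Theorem~\ref{Thm:LocallyFree} computes the \emph{intrinsic} virtual Euler characteristic $\widetilde\chi(\Quot_X(\mathcal F,n))$, whereas $\DDT_{\mathcal F,n}$ is defined using the Behrend weight restricted from the ambient moduli space $M_{\DT}(r,D)$, and these are not a priori equal; the paper makes exactly this point in Remark~\ref{rmk:comparison}, noting that the agreement of the two generating series is itself a nontrivial consequence.
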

\begin{proof}
Since $\lExt^1(\F,\O_X) = 0$, one deduces from the definitions that $\PPT_{\F,n} = 0$ for all $n \neq 0$ and  that $\PPT_{\F,0} = 1$.
The formula for $\DDT_{\F}$ then follows from Theorem~\ref{thm:DTPT}.
\end{proof}
\begin{remark}
Recall that any rank one PT pair is of the form $I^\bullet = [s \colon \O_X \to F]$ where $F$ is a pure one-dimensional sheaf and $\coker(s)$ is zero-dimensional; in particular, $\ker(s) = H^0(I^\bullet)$ is the ideal sheaf of a Cohen--Macaulay curve $C \subset X$.
The above corollary generalises the fact that the only rank one PT pair $I^\bullet$ with $H^0(I^\bullet)$ a line bundle is the trivial one with $F = 0$.  
\end{remark}

\begin{remark}\label{rmk:comparison}
Combining Corollary \ref{cor101} with Theorem \ref{Thm:LocallyFree}, we observe that for a stable vector bundle $\F$ of rank $r$ on a Calabi--Yau $3$-fold $X$, one has the identity
\[
\DDT_{\F,n} = \widetilde\chi(\Quot_X(\F,n)).
\]
Note that the left hand side depends, a priori, on the embedding
\[
\phi_{\F,n}\colon\Quot_X(\F,n) \into M_{\DT}(r,D,\ch_2(\F),\ch_3(\F)-n)
\]
whereas the right hand side is completely intrinsic to the Quot scheme. The above identity is trivial in the case $(r,D)=(1,0)$, for then $\Quot_X(\F,n)=\Hilb^n X=M_{\DT}(1,0,0,-n)$.
\end{remark}

Next, we assume that $\mathcal F$ is a \emph{reflexive} $\mu_{\omega}$-stable sheaf. 
Let $(-)^*$ denote the usual $\O_X$-linear dual of a sheaf, and note that $\mathcal F^*$ is again $\mu_{\omega}$-stable and reflexive; see for example \cite[Lemma~2.1(2)]{Gholampour2017}. In particular, it is both a DT and PT object by Corollary~\ref{cor:DTPT_object} and the series $\PPT_{\mathcal F^*}(q)$ is well-defined.
We denote the \emph{reciprocal} of a polynomial $P(q)$ of degree $d$ by
\[
P^*(q) = q^dP(q^{-1}),
\]
and we let $\ell(T)$ denote the length of a zero-dimensional sheaf $T$.

Following its proof, we obtain the virtual analogue of \cite[Thm.~1.2]{Gholampour2017}.

\begin{corollary}\label{cor292}
With the assumptions of Theorem~\ref{thm:DTPT}, if $\mathcal F$ is reflexive then the series
\[
\dfrac{\DDT_{\mathcal F}(q)}{\mathsf M((-1)^rq)^{r\chi(X)}} = \PPT_{\mathcal F}(q)
\]
of $\mathcal F$-local PT invariants is a \emph{polynomial} of degree $\ell(\lExt^1(\mathcal F,\O_X))$.
Moreover, this polynomial has a symmetry induced by the derived dualising functor
\begin{equation}\label{eq:PTF_symmetry}
\PPT^*_{\mathcal F}(q) = \PPT_{\mathcal F^*}(q).
\end{equation}
And finally, if $\rk(\mathcal F) = 2$ then $\PPT^*_{\mathcal F}(q) = \PPT_{\mathcal F}(q)$ is palindromic.
\end{corollary}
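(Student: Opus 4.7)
The proof of the three claims in Corollary~\ref{cor292} will proceed separately. The polynomial claim is the most immediate: for a reflexive sheaf $\mathcal F$ of homological dimension at most one on a smooth $3$-fold, the sheaf $T \defeq \lExt^1(\mathcal F, \O_X)$ is zero-dimensional by the standard codimension bound on local Ext sheaves of reflexive sheaves (cf.~\cite{Gholampour2017}). Hence $\Quot_X(T,n)$ is empty for $n > \ell(T)$ and is a reduced point for $n = \ell(T)$, which yields a polynomial of degree exactly $\ell(T)$.

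For the symmetry~\eqref{eq:PTF_symmetry}, the key input is a canonical isomorphism $\lExt^1(\mathcal F^*, \O_X) \cong T^{\mathrm D}$. To produce it, I would apply the involutive derived dualising functor $(-)^\vee$ to the truncation triangle $\mathcal F^* \to \mathcal F^\vee \to T[-1]$ and use $(T[-1])^\vee = T^{\mathrm D}[-2]$ together with reflexivity of $\mathcal F$ and $\mathcal F^*$ to obtain a triangle $T^{\mathrm D}[-2] \to \mathcal F \to (\mathcal F^*)^\vee$; reading the cohomology long exact sequence gives the claimed isomorphism $\lExt^1(\mathcal F^*, \O_X) \cong T^{\mathrm D}$. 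Because $(-)^{\mathrm D}$ is an anti-equivalence on $\Coh_0(X)$ exchanging length-$n$ subsheaves with length-$n$ quotients, this identification yields a scheme-theoretic isomorphism
\[
\Quot_X(\lExt^1(\mathcal F^*, \O_X), n) \,\widetilde{\ra}\, \Quot_X(\lExt^1(\mathcal F, \O_X), \ell(T) - n).
\]
To match the Behrend weights, I would apply $(-)^\vee$ to the universal triangle $\pi_X^*\mathcal F \to J^\bullet \to \mathscr Q^{\mathrm D}[-1]$ from the proof of Proposition~\ref{PTembedding}, on the base $B = \Quot_X(T,n)$, and verify that $(J^\bullet)^\vee$ is a family of PT pairs based at $\mathcal F^*$ whose classifying map to the PT moduli space is exactly the composition of the above Quot-scheme isomorphism with $\psi_{\mathcal F^*}$. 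Since $(-)^\vee$ is an equivalence of $D^b(X \times B)$ for every $B$, it induces a scheme-theoretic isomorphism between the relevant open loci of the PT moduli spaces, and hence preserves the pulled-back Behrend function (this being intrinsic to the scheme). Summing $\PPT_{\mathcal F^*, n} = \PPT_{\mathcal F, \ell(T) - n}$ over $n$ now yields $\PPT_{\mathcal F^*}(q) = \PPT^*_{\mathcal F}(q)$.

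Finally, the palindromic statement in rank $2$ follows from the natural isomorphism $\mathcal F \cong \mathcal F^* \otimes \det(\mathcal F)$, which holds for rank-$2$ locally free sheaves and extends to rank-$2$ reflexive sheaves because both sides are reflexive and agree on the locally free locus, whose complement has codimension at least~$3$. Combined with Proposition~\ref{prop:PT_tensor_by_line_bundle}, this gives $\PPT_{\mathcal F^*}(q) = \PPT_{\mathcal F^* \otimes \det \mathcal F}(q) = \PPT_{\mathcal F}(q)$, and the symmetry~\eqref{eq:PTF_symmetry} just established then yields $\PPT^*_{\mathcal F}(q) = \PPT_{\mathcal F}(q)$, i.e.\ palindromicity. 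The main obstacle throughout is the Behrend weight matching in the symmetry: one must argue that the derived dual induces a scheme-theoretic, not merely pointwise, isomorphism of the embedded Quot schemes inside the ambient PT moduli spaces; the cleanest framework is to work at the level of Lieblich's moduli stack $\mathscr M_X$, since all PT moduli spaces involved embed as open substacks, and $(-)^\vee$ acts as a scheme-theoretic automorphism there.
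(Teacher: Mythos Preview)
Your proposal is correct and follows essentially the same route as the paper, which simply defers to the proof of \cite[Thm.~1.2]{Gholampour2017} and remarks that the only extra ingredient in the Behrend-weighted setting is Proposition~\ref{prop:PT_tensor_by_line_bundle} together with the rank-$2$ identity $\F^* \cong \F \otimes \det(\F)^{-1}$. You have made explicit precisely the step the paper leaves implicit, namely that the derived dual induces a scheme-theoretic isomorphism at the level of the ambient moduli stack and hence matches the restricted Behrend weights.
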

\begin{proof}
The final claim requires the Behrend weighted identity $\PPT_{\mathcal F \otimes L}(q) = \PPT_{\mathcal F}(q)$ for any line bundle $L$, which is provided by Proposition~\ref{prop:PT_tensor_by_line_bundle}, and $\F^* \cong \F \otimes \det(\F)^{-1}$ if $\rk(\F) = 2$.
\end{proof}

\subsection{Rationality: open questions}
Let $\beta\in H_2(X,\Z)$ be a curve class on a Calabi--Yau $3$-fold $X$, and let $\PPT_{n,\beta}$ be the (rank one) PT invariant, defined as the degree of the virtual fundamental class of $P_n(X,\beta)=M_{\PT}(1,0,-\beta,-n)$. In \cite[Conj.~3.2]{PT}, Pandharipande and Thomas conjectured that the Laurent series
\[
\PPT_\beta (q) = \sum_{n\in \Z}\PPT_{n,\beta} q^n
\]
is the expansion of a \emph{rational function} in $q$ (invariant under $q\leftrightarrow q^{-1}$).
This was proved by Bridgeland \cite{Bri}. More generally, Toda \cite[Thm.~1.3]{Toda2} proved the  rationality of the series
\[
\PPT_{r,D,\beta}(q)=\sum_{6n\in \Z}\PPT(r,D,-\beta,-n) q^n
\]
for arbitrary $(r,D,\beta)$. Moreover, Toda previously proved \cite{Toda0} the rationality of the unweighted generating function
\[
\widehat{\PPT}_{\beta}(q) = \sum_{n\in \Z}\chi(P_n(X,\beta)) q^n.
\]
One may ask similar questions about the local invariants studied in the present paper.

Let $\F$ be a $\mu_{\omega}$-stable sheaf of homological dimension at most one on the Calabi--Yau $3$-fold $X$. It makes sense to ask the following:

\begin{question}
On a Calabi--Yau $3$-fold $X$, is $\PPT_{\F}(q)$ the expansion of a rational function?
\end{question}

One can of course ask the same question for the unweighted invariants
\[
\widehat{\PPT}_{\F}(q)=\sum_{n\geq 0}\chi(\Quot_X(\lExt^1(\F,\O_X),n)) q^n,
\]
where $X$ is now an arbitrary smooth projective $3$-fold and $\F$ is a torsion free sheaf of homological dimension at most one (not necessarily stable).

\begin{question}
On a $3$-fold $X$, is $\widehat{\PPT}_{\F}(q)$ the expansion of a rational function?
\end{question}
Rationality of $\widehat{\PPT}_{\F}$ has been announced for toric $3$-folds in \cite{Gholampour2017}.
As for the weighted version, there is a partial answer in the rank one case, building upon work of Pandharipande and Thomas \cite{BPS}.
We will investigate such rationality questions in future work.

\begin{prop}\label{prop:cycleVSframed}
Let $X$ be a Calabi--Yau $3$-fold, $C\subset X$ a Cohen--Macaulay curve in class $\beta\in H_2(X,\Z)$. If $\beta$ is irreducible, then $\PPT_{\mathscr I_C}(q)$ is the expansion of a rational function in $q$.
\end{prop}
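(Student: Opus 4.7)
The plan is to reduce the statement to a localised rationality result for PT invariants on reduced irreducible Cohen--Macaulay curves in irreducible classes, which is the subject of the Pandharipande--Thomas BPS analysis \cite{BPS}. The bridge is an identification, valid in the irreducible case, of the $\mathscr I_C$-local PT invariants of this paper with the cycle-local PT invariants of \cite{Ob1}.

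The first step is a cycle-theoretic observation. When $\beta$ is irreducible, any Cohen--Macaulay curve $C' \subset X$ with $[C'] = \beta$ is forced to be reduced and irreducible: writing $C'$ as a sum $\sum m_i C_i'$ of its reduced irreducible components would otherwise produce a non-trivial effective decomposition $\beta = \sum m_i [C_i']$ in $H_2(X,\Z)$, contradicting irreducibility. Since a reduced irreducible closed subscheme is determined by its underlying set, the equality $[C'] = [C]$ in $\Chow_1(X,\beta)$ then forces $C' = C$ as closed subschemes of $X$.

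Using this, I would identify the image of the closed immersion $\psi_{\mathscr I_C}\colon \Quot_X(\lExt^1(\mathscr I_C,\O_X),n) \hookrightarrow P_n(X,\beta)$ from Proposition~\ref{PTembedding} with the cycle-local PT locus over $[C]$ of \cite{Ob1}. On closed points, both parametrise PT pairs whose underlying Cohen--Macaulay support is $C$, equivalently (by the first step) whose underlying $1$-cycle in $\Chow_1(X,\beta)$ equals $[C]$. A comparison of the respective universal families on $P_n(X,\beta)$ promotes this to a scheme-theoretic identification of closed subschemes, and since both $\mathscr I_C$-local and cycle-local generating series are defined by restricting the same global Behrend weight $\nu_{\PT}$, one obtains the equality $\PPT_{\mathscr I_C}(q) = \PPT_{[C]}(q)$. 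Rationality of the right-hand side, for irreducible $\beta$, is then exactly the type of localised rationality established in \cite{BPS} during the proof of the BPS rationality conjecture for irreducible classes.

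The main obstacle I anticipate is the scheme-theoretic step: while the set-theoretic equality between the image of $\psi_{\mathscr I_C}$ and the cycle-local locus inside $P_n(X,\beta)$ falls out of the cycle-theoretic observation, equating the Behrend-weighted Euler characteristics requires promoting this to an equality of closed subschemes, or at least to a pointwise equality of restricted Behrend functions. A clean argument should go through comparing the two universal families directly, exploiting once more the uniqueness of $C$ within its cycle class in the irreducible setting.
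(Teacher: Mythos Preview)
Your approach is essentially the same as the paper's proof: identify the $\mathscr I_C$-local locus with the cycle-local locus via the observation that irreducibility of $\beta$ forces the Cohen--Macaulay support of any stable pair with cycle $[C]$ to equal $C$, then invoke the rationality result of \cite{BPS}. Your anticipated obstacle is not genuine: since both invariants are defined by integrating the \emph{same} Behrend function (restricted from the ambient PT moduli space), the set-theoretic equality of the two loci already yields $\PPT_{\mathscr I_C,n}=\mathsf P_{n,C}$, and no scheme-theoretic comparison is required.
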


\begin{proof}
Let $g=1-\chi(\O_C)$ be the arithmetic genus of $C$, and denote by $P_n(X,C)\subset P_{1-g+n}(X,\beta)$ the closed subset parametrising stable pairs $[\O_X\ra F]$ such that the fundamental one-cycle of $F$ equals $[C]\in \Chow_1(X,\beta)$. Let $\mathsf P_{n,C}$ denote the virtual contribution of $P_n(X,C)$. Under the irreducibility assumption on $\beta$, Pandharipande and Thomas showed in \cite[Sec.~3.1]{BPS} that the generating function of cycle-local invariants
\[
\mathsf Z_C(q) = \sum_{n\geq 0} \mathsf P_{n,C}q^{1-g+n}
\]
admits the unique expression
\[
\mathsf Z_C(q) = \sum_{r=0}^g\mathsf n_{r,C}q^{1-r}(1+q)^{2r-2}
\]
as a rational function of $q$, where $\mathsf n_{r,C}$ are integers, called the ``BPS numbers'' of $C$. Since $\beta$ is irreducible, however, we have
\[
\mathsf P_{n,C} = \PPT_{\mathscr I_C,n}.
\]
Indeed, the Chow variety parametrises Cohen--Macaulay curves on $X$ in class $\beta$, therefore given a stable pair $[s \colon \O_X\ra F]\in P_n(X,C)$ along with its induced short exact sequence
\[
0\ra \O_X/\ker s\ra F \ra Q \ra 0,
\]
the condition $[F] = [C] \in \Chow_1(X,\beta)$ implies the identity $\ker s = \mathscr I_C$. It follows that $\PPT_{\mathscr I_C} = \mathsf Z_C$, whence the result.
\end{proof}

\appendix
\section{\'{E}tale maps between Quot schemes}\label{quotmess}
Let $\varphi\colon X\ra X'$ be a morphism of varieties, with $X'$ proper. Let $F'$ be a coherent sheaf on $X'$, set $F = \varphi^\ast F'$ and
\be\label{quots123}
Q = \Quot_X(F,n),\qquad Q' = \Quot_{X'}(F',n).
\ee
If we have a scheme $S$, we denote by $F_S$ the pullback of $F$ along the projection $X\times S\ra X$. For instance, we let
\[
F_Q\surj \mathscr T
\]
denote the universal quotient, living over $X\times Q$.

\begin{lemma}\label{lemma:surj726}
Let $\varphi\colon X\ra X'$ be an \'etale map of quasi-projective varieties, $F'$ a coherent sheaf on $X'$ and let $F = \varphi^\ast F'$. Let $[\theta\colon F\surj T]\in Q$ be a point such that $\varphi$ is injective on $\Supp T$. Then
\be\label{surj837}
F'\ra \varphi_\ast\varphi^\ast F'\ra \varphi_\ast T
\ee
stays surjective.
\end{lemma}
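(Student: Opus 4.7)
The plan is to check surjectivity of \eqref{surj837} stalkwise at the finitely many points of $\varphi(\Supp T)$, reducing the problem to a statement about finite-length modules over the local rings $\O_{X',\bar x'}$ and $\O_{X,\bar x}$ related by the \'etale map $\varphi^{\#}\colon \O_{X',\bar x'}\ra \O_{X,\bar x}$.

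First I would note that $T$ has finite support (as a length $n$ quotient), hence so does $\varphi_{\ast}T$, and surjectivity is automatic off $\varphi(\Supp T)$. Fixing $\bar x'\in\varphi(\Supp T)$ with unique preimage $\bar x\in\Supp T$, one chooses an open $V\subset X'$ so small that $V\cap \varphi(\Supp T)=\{\bar x'\}$, so that $T|_{\varphi^{-1}(V)}$ is supported on the single closed point $\bar x$. Global sections of a coherent sheaf with zero-dimensional support equal the direct sum of its stalks, so $\varphi_{\ast}T(V)=T_{\bar x}$, and the colimit yields $(\varphi_{\ast}T)_{\bar x'}=T_{\bar x}$ as an $\O_{X',\bar x'}$-module via $\varphi^{\#}$. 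Through the unit of $\varphi^{\ast}\dashv \varphi_{\ast}$, the stalk map induced by \eqref{surj837} factors as
\[
F'_{\bar x'}\lra F'_{\bar x'}\otimes_{\O_{X',\bar x'}}\O_{X,\bar x}=F_{\bar x}\xrightarrow{\;\theta_{\bar x}\;}T_{\bar x},
\]
whose second arrow is surjective because $\theta$ is. The task therefore reduces to showing that the image of $F'_{\bar x'}$ in $T_{\bar x}$ already agrees with the image of the full module $F_{\bar x}$.

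The crux will be the observation that since $\varphi$ is \'etale with trivial residue field extension $\kappa(\bar x')=\C=\kappa(\bar x)$, the map on completions $\widehat \O_{X',\bar x'}\,\widetilde{\ra}\,\widehat \O_{X,\bar x}$ is an isomorphism, and hence so is every Artinian quotient
\[
\O_{X',\bar x'}/\mathfrak m_{\bar x'}^{N}\,\widetilde{\ra}\,\O_{X,\bar x}/\mathfrak m_{\bar x}^{N}, \qquad N\geq 1.
\]
Since $T_{\bar x}$ has finite length as an $\O_{X,\bar x}$-module, some $\mathfrak m_{\bar x}^{N}$ annihilates it, so the $\O_{X,\bar x}$-action on $T_{\bar x}$ factors through the common Artinian quotient. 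Consequently, every $\O_{X',\bar x'}$-submodule of $T_{\bar x}$ is automatically $\O_{X,\bar x}$-stable; applied to the image of $F'_{\bar x'}$ in $T_{\bar x}$, and noting that $F_{\bar x}$ is $\O_{X,\bar x}$-generated by the pure tensors $s\otimes 1$ with $s\in F'_{\bar x'}$, this forces the image of $F'_{\bar x'}$ to coincide with the image of $F_{\bar x}$, which is all of $T_{\bar x}$.

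I expect the isomorphism of Artinian quotients of local rings along $\varphi^{\#}$ to be the one nontrivial input, although it is a standard consequence of \'etaleness over $\C$. The remainder of the argument is routine bookkeeping with zero-dimensional sheaves and finite-length modules; in particular, no Zariski-local trivialisation of $\varphi$ near $\bar x$ is needed, which is important because \'etale maps are generally not Zariski-locally isomorphisms.
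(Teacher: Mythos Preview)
Your argument is correct and takes a genuinely different route from the paper's. The paper proceeds geometrically: it first replaces $X'$ by the image of $\varphi$ (so that $\varphi$ becomes faithfully flat and $\varphi^\ast$ reflects epimorphisms), then shrinks $X$ by removing from each fibre $X_{\varphi(b)}$ the points other than $b$, so that the counit $\varphi^\ast\varphi_\ast T\to T$ becomes an isomorphism; pulling back \eqref{surj837} along $\varphi$ then recovers the original surjection $\theta$, and faithful flatness pushes surjectivity back down. Your approach is stalkwise and algebraic: you observe that \'etaleness with trivial residue extension forces $\O_{X',\bar x'}/\mathfrak m_{\bar x'}^{N}\,\widetilde{\ra}\,\O_{X,\bar x}/\mathfrak m_{\bar x}^{N}$, so that the two module structures on the finite-length module $T_{\bar x}$ coincide, whence the $\O_{X',\bar x'}$-image of $F'_{\bar x'}$ already equals the $\O_{X,\bar x}$-image of $F_{\bar x}$. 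Your argument is more self-contained and avoids both the shrinking step and the appeal to faithfully flat descent. The paper's route, however, has a structural payoff: the isomorphism $\varphi^\ast\varphi_\ast T\,\widetilde{\ra}\,T$ obtained after shrinking is exactly what is needed again as \eqref{fat} in the proof of Proposition~\ref{prop:comparison}, so the paper's lemma doubles as a warm-up for that construction.
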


\begin{proof}
First of all, to check surjectivity of \eqref{surj837}, we may replace $X'$ by any open neighborhood of the support of $\varphi_\ast T$, for example the image of $\varphi$ itself. So now $\varphi$ is faithfully flat, hence $\varphi^\ast$ is a faithful functor, in particular it reflects epimorphisms. It is easy to see that we may also replace $X$ with any open neighborhood $V$ of the support $B = \Supp T$.
Since $\varphi$ is \'etale, it is in particular quasi-finite and unramified: the fibres $X_p$, for $p\in X'$, are finite, reduced of the same length. Then
\[
A=\coprod_{b\in B}X_{\varphi(b)}\setminus \Set{b}\subset X
\]
is a closed subset, and we can consider the open neighborhood
\[
B\subset V = X\setminus A\subset X
\]
of the support of $T$. Since $\varphi$ is injective on $B$, and we have just removed the points in $\varphi^{-1}(\varphi(B))$ that are not in $B$, the map $\varphi$ is now an immersion around $B$, so after replacing $X$ by $V$ we observe that the canonical map
\be\label{can191}
\varphi^\ast\varphi_\ast T\ra T
\ee
is an isomorphism.
Let us pullback \eqref{surj837} along $\varphi$, to get
\be\label{pullbackl173}
\rho\colon F\ra \varphi^\ast\varphi_\ast T.
\ee
If we compose $\rho$ with \eqref{can191} we get back $\theta\colon F\surj T$, the original surjection. But \eqref{can191} is an isomorphism, thus $\rho$ is surjective. Since $\varphi^\ast$ reflects epimorphisms, \eqref{surj837} is also surjective, as claimed.
\end{proof}

\begin{prop}\label{prop:comparison}
Let $\varphi\colon X\ra X'$ be a morphism of varieties, with $X'$ proper. Let $Q$ and $Q'$ be Quot schemes as in \eqref{quots123}. 
Fix a point $\theta = [F\surj T]\in Q$ such that $\varphi$ is \'etale around $B = \Supp T$ and $\varphi|_{B}$ is injective. Then there is an open neighborhood $\theta\in U\subset Q$ admitting an \'etale map $\Phi\colon U\ra Q'$.
\end{prop}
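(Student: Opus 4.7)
My plan is to construct $\Phi$ on an open neighborhood $U$ of $\theta$ by pushing the universal quotient on $X\times U$ forward along $\varphi\times\mathrm{id}_U$, and then to verify étaleness through the infinitesimal lifting criterion. First I would identify the correct $U$: let $\mathcal B\subset X\times Q$ denote the scheme-theoretic support of the universal quotient $\mathscr T$. Flatness of $\mathscr T$ over $Q$ together with finiteness of its fibres (all of length $n$) implies that $\mathcal B\to Q$ is finite, hence proper. Define $U\subset Q$ to be the locus of $q\in Q$ such that (i) the fibre $\mathcal B_q$ is contained in the étale locus $V\subset X$ of $\varphi$, and (ii) the composition $\mathcal B_q\hookrightarrow X\to X'$ is injective (equivalently, a closed immersion, under (i)). Openness of (i) is immediate from properness of $\mathcal B\to Q$ and openness of $V$. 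Openness of (ii) follows by observing that on the locus satisfying (i) the map $\mathcal B\to X'\times Q$ is unramified, so the diagonal of $\mathcal B\times_{X'\times Q}\mathcal B$ is open; projecting its closed complement to $Q$ along the finite morphism $\mathcal B\times_{X'\times Q}\mathcal B\to Q$ yields a closed subset whose complement is the desired open. By hypothesis, $\theta\in U$.

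Next I would construct $\Phi\colon U\to Q'$ as a morphism of functors of points. For every $S\to U$, the restricted support $\mathcal B_S$ maps via a closed immersion to $X'\times S$, so $(\varphi\times\mathrm{id}_S)_*\mathscr T_S$ is a coherent sheaf on $X'\times S$ that is flat over $S$ with fibres of length $n$. The adjunction morphism $F'_S\to(\varphi\times\mathrm{id}_S)_*\mathscr T_S$ is surjective by a family version of Lemma \ref{lemma:surj726}, whose proof carries over once one knows that the canonical map $(\varphi\times\mathrm{id}_S)^*(\varphi\times\mathrm{id}_S)_*\mathscr T_S\to\mathscr T_S$ is an isomorphism on the étale injective support. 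This produces a well-defined $Q'$-valued family, hence the morphism $\Phi$.

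Étaleness will be verified through the infinitesimal lifting criterion. Given a square-zero thickening $A\twoheadrightarrow A_0$ of Artinian local $\C$-algebras together with a commutative diagram
\[
\begin{tikzcd}
\Spec A_0\ar{r}\ar{d} & U\ar{d}{\Phi}\\
\Spec A\ar{r} & Q'
\end{tikzcd}
\]
the $A$-point of $Q'$ corresponds to a family $F'_A\twoheadrightarrow T'$ with $T'|_{A_0}\cong(\varphi\times\mathrm{id})_*T_0$, whilst the $A_0$-point of $U$ is $F_{A_0}\twoheadrightarrow T_0$. I would define the lift to $U$ by $T_A\defeq(\varphi\times\mathrm{id})^*T'$. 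Three points require verification: (a) the support of $T'$ lies in the étale-and-injective locus of $\varphi\times\mathrm{id}$, by topological invariance of support under nilpotent thickenings; (b) pullback along the étale (hence flat) map $\varphi\times\mathrm{id}$ preserves $A$-flatness and fibrewise length of $T'$; (c) the identity $(\varphi\times\mathrm{id})^*(\varphi\times\mathrm{id})_*T_A\cong T_A$, combined with the adjunction
\[
\Hom(F_A,T_A)\cong\Hom(F'_A,(\varphi\times\mathrm{id})_*T_A),
\]
simultaneously produces the required surjection $F_A\twoheadrightarrow T_A$ lifting $F_{A_0}\twoheadrightarrow T_0$ and guarantees uniqueness of such a lift.

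The main obstacle I anticipate is point (c): promoting Lemma \ref{lemma:surj726} to families over non-reduced bases, where the scheme-theoretic support of $T'$ is itself non-reduced and one must be sure that the pullback–pushforward isomorphism is canonical and compatible with restriction modulo the square-zero ideal. This should be handled by combining the openness of the étale locus in $X$ with a careful unravelling of the adjunction unit in the family setting, but it is the only place where the argument goes beyond a formal application of the already-established Lemma \ref{lemma:surj726}. A secondary, more standard technical point is the openness of condition (ii) defining $U$, which relies on unramifiedness of $\mathcal B\to X'\times Q$ on the locus of (i) and on the properness of $\mathcal B\times_{X'\times Q}\mathcal B\to Q$.
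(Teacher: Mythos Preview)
Your overall strategy---push forward the universal quotient to build $\Phi$, then verify \'etaleness via the infinitesimal lifting criterion with the lift given by pullback---is exactly the paper's. Your definition of $U$ via support conditions on the universal support $\mathcal B\to Q$ is a clean variant: the paper instead first replaces $X$ by an affine open on which $\varphi$ is \'etale, then takes $U$ to be the locus where the cokernel of the adjunction map $F'_Q\to\widetilde\varphi_*\mathscr T$ vanishes, using properness of $X'$ to project the support down to $Q$. Both routes produce an open neighbourhood of $\theta$.

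There is, however, a genuine gap in your step (b). The naive lift $T_A\defeq(\varphi\times\mathrm{id})^*T'$ is wrong in general: it picks up extra support over the points of $\varphi^{-1}(\varphi(B_0))\setminus B_0$, so its fibres need not have length $n$. Concretely, take $\varphi\colon\A^1\sqcup\A^1\to\A^1$ the identity on each component, $F'=\O_{X'}$, $n=1$, and $\theta$ the skyscraper at the origin of the first copy; your conditions (i) and (ii) are satisfied, yet $(\varphi\times\mathrm{id})^*T'$ has fibre length $2$. Condition (ii) constrains $\varphi|_{\mathcal B_q}$ but says nothing about the \emph{other} preimages of $\varphi(\mathcal B_q)$. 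The paper's fix is to shrink $X$ once more before running the infinitesimal criterion: one removes the finite closed set $\coprod_{b\in B}X_{\varphi(b)}\setminus\{b\}$, so that $\varphi^{-1}(\varphi(b))=\{b\}$ for every $b\in B$. After this, the counit $\varphi^*\varphi_*\mathscr F\to\mathscr F$ is an isomorphism for every infinitesimal deformation $\mathscr F$ of $T$, and the lift $\varphi_{\overline S}^*\mathscr H$ lands in the correct Quot scheme. You can graft this shrinking into your argument (it only changes $U$ to a smaller open still containing $\theta$), but the step is not optional; without it the unit/counit identities you invoke in (c) simply fail.
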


\begin{proof}
First of all, we may replace $X$ by any open neighborhood $V$ of $B$. We may choose $V$ affine, so we may assume $\varphi$ is affine and \'etale. In the diagram
\[
\begin{tikzcd}
X\times \theta \MySymb{dr} \arrow[hook]{r}{i} \arrow[swap]{d}{\varphi} 
& X\times Q \MySymb{dr} \arrow{r}{p}\arrow{d}{\widetilde\varphi} & X\arrow{d}{\varphi}\\
X'\times \theta \arrow[swap,hook]{r}{j} 
& X'\times Q\arrow[swap]{r}{\rho} & X'
\end{tikzcd}
\]
the map $\widetilde\varphi$ is now affine, so $j^\ast \widetilde\varphi_\ast\,\widetilde{\ra}\,\varphi_\ast i^\ast$, and similarly we have $\rho^\ast \varphi_\ast\,\widetilde{\ra}\,\widetilde\varphi_\ast p^\ast$ by flat base change. Let us look at the canonical map
\[
\alpha\colon \rho^\ast F'\ra \rho^\ast \varphi_\ast F\,\widetilde{\ra}\, \widetilde\varphi_\ast F_Q\surj \widetilde\varphi_\ast \mathscr T.
\]
We know by Lemma \ref{lemma:surj726} that restricting $\alpha$ to $\theta\in Q$ (that is, applying $j^\ast$) we get a surjection $F'\ra \varphi_\ast F\surj \varphi_\ast T$. Since $\varphi$ is \'etale and $\varphi|_B$ is injective, this gives a well-defined point
\[
\varphi_\ast\theta = [F'\surj \varphi_\ast T] \in Q'.
\]
Now we extend the association $\theta\mapsto \varphi_\ast\theta$ to a morphism $\Phi\colon U\ra Q'$ for suitable $U\subset Q$.
Note that $\widetilde\varphi_\ast \mathscr T$, the target of $\alpha$, is coherent (reason: $\Supp \mathscr T\ra Q$ proper and factors through the separated projection $X'\times Q\ra Q$, so that $\widetilde\varphi\colon \Supp \mathscr T\ra X'\times Q$ is proper; but $\mathscr T$ is the pushforward of a coherent sheaf on its support). Then the cokernel $\mathscr K$ of $\alpha$ is also coherent, so that $\Supp \mathscr K\subset X'\times Q$ is closed. Since $X'$ is proper, we have that the projection $\pi\colon X'\times Q\ra Q$ is closed, so the image of the support of $\mathscr K$ is closed. Let 
\[
U = Q\setminus \pi(\Supp \mathscr K)\subset Q
\]
be the open complement (non-empty because $\theta$ belongs there by assumption).
Now consider the cartesian square
\[
\begin{tikzcd}
X\times U \MySymb{dr} \arrow[hook]{r}{} \arrow[swap]{d}{\varphi_U} 
& X\times Q\arrow{d}{\widetilde\varphi} \\
X'\times U \arrow[swap,hook]{r}{} 
& X'\times Q
\end{tikzcd}
\]
and observe that by construction, $\alpha$ restricts to a surjection
\[
\alpha\big|_{X'\times U}\colon \rho_U^\ast F' \surj \varphi_{U\ast}\mathscr T\big|_{X\times U}
\]
where $\rho_U$ is the projection $X'\times U\ra X'$. But the target $\varphi_{U\ast}\mathscr T\big|_{X\times U}$ is flat over $U$ (reason: $\widetilde\varphi_\ast\mathscr T$ is flat over $Q$ because $\mathscr T$ is; but $\varphi_{U\ast}\mathscr T\big|_{X\times U}$ is isomorphic to the pullback of $\widetilde\varphi_\ast\mathscr T$ along the open immersion $X'\times U\ra X'\times Q$, therefore it is flat over $U$). We have constructed a morphism
\[
\Phi\colon U\ra Q'.
\]
Now we show it is \'etale by using the infinitesimal criterion. First of all, as in the proof of Lemma \ref{lemma:surj726}, we may shrink $X$ further in such a way that, for every $x\in B$, the fibre $X_{\varphi(x)}$ consists of the single point $x$. This implies that the canonical map $\varphi^\ast\varphi_\ast T\ra T$ is an isomorphism; moreover, this condition only depends on the set-theoretic support of $T$, so it is preserved in infinitesimal neighborhoods; in particular we have
\be\label{fat}
\varphi^\ast\varphi_\ast\mathscr F\,\widetilde{\ra}\,\mathscr F
\ee
for all infinitesimal deformations $F_S\surj \mathscr F$ of $\theta$ parametrized by a fat point $S$.

Let $\iota\colon S\ra \overline S$ be a square zero extension of fat points, and consider a commutative diagram
\[
\begin{tikzcd}
S\arrow[hook,swap]{d}{\iota}\arrow{r}{g} & U\arrow{d}{\Phi} \\
\overline S\arrow[swap]{r}{h}\arrow[dotted]{ur}[description]{v} & Q'
\end{tikzcd}
\]
where we need to find a unique $v$ making the two triangles commutative. This will correspond to a family
\[
F_{\overline S}\surj \mathscr V
\]
that we have to find.
To fix notation, consider the fibre diagram
\be\label{fibre618}
\begin{tikzcd}
X\times S \MySymb{dr} \arrow[hook]{r}{\iota_X}\arrow[swap]{d}{\varphi_S} & X\times \overline S\arrow{d}{\varphi_{\overline S}} \\
X'\times  S\arrow[hook,swap]{r}{\iota_{X'}} & X'\times \overline S
\end{tikzcd}
\ee
and denote by $F_S\surj \mathscr G$ the family corresponding to $g$ (restricting to $\theta$ over the closed point) and by
$F'_{\overline S}\surj \mathscr H$ is the family corresponding to $h$. The condition $\varphi\circ g = h\circ \iota$ means that we have a diagram of sheaves
\be\label{diag22232}
\begin{tikzcd}
\iota_{X'}^\ast F'_{\overline S} \arrow[equal]{d} \arrow[two heads]{r} & \iota_{X'}^\ast\mathscr H\isoarrow{d} \\
F'_S \arrow[two heads]{r} & \varphi_{S\ast}\mathscr G
\end{tikzcd}
\ee
over $X'\times S$. The conditions $\varphi_{\overline S\ast}\mathscr V=\mathscr H$ (translating $\varphi\circ v = h$) and 
$\varphi^\ast_{\overline S}\varphi_{\overline S\ast}\mathscr V = \mathscr V$, coming from \eqref{fat}, together determine for us the family
\[
\mathscr V = \varphi_{\overline S}^\ast \mathscr H,
\]
which we consider together with the natural surjection $\varphi^\ast(h)\colon F_S\surj \mathscr V$. Note that 
\begin{align*}
\iota_X^\ast\mathscr V 
&= \iota_X^\ast \varphi_{\overline S}^\ast\mathscr H & \\
&= \varphi_S^\ast \iota_{X'}^\ast \mathscr H & \textrm{by }\eqref{fibre618} \\
&= \varphi_S^\ast \varphi_{S\ast}\mathscr G & \textrm{by } \eqref{diag22232} \\
&= \mathscr G & \textrm{by } \eqref{fat}
\end{align*}
proves that $v\circ \iota = g$, finishing the proof.
\end{proof}

Finally, we extend the statement to $X'$ quasi-projective.

\begin{prop}\label{prop:etaleV}
Let $\varphi\colon X\ra X'$ be an \'etale map of quasi-projective varieties, $F'$ a coherent sheaf on $X'$, and let $F = \varphi^\ast F'$. Let $V\subset Q$ be the open subset whose points correspond to quotients $F\surj T$ such that $\varphi|_{\Supp T}$ is injective. Then there is an \'etale morphism $\Phi\colon V\ra Q'$.
\end{prop}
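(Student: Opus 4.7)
The plan is to extend the proof of Proposition~\ref{prop:comparison} to the quasi-projective setting and glue the resulting maps over $V$. Properness of $X'$ is used in Proposition~\ref{prop:comparison} only to conclude that the image $\pi(\Supp\mathscr K)\subset Q$ of the cokernel support under $\pi\colon X'\times Q\to Q$ is closed. I would replace this by the observation that, because $\mathscr T$ is a $Q$-flat family of zero-dimensional sheaves of length $n$, the support $\Supp\mathscr T\to Q$ is finite; hence the composition $\Supp\mathscr T\hookrightarrow X\times Q\xrightarrow{\widetilde\varphi} X'\times Q$ is proper by cancellation with the separated projection $X'\times Q\to Q$, so $\widetilde\varphi(\Supp\mathscr T)$ is closed in $X'\times Q$ and finite over $Q$. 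Since $\mathscr K$ is a quotient of $\widetilde\varphi_\ast\mathscr T$, its support lies in $\widetilde\varphi(\Supp\mathscr T)$, and consequently $\pi(\Supp\mathscr K)\subset Q$ is closed. With this substitution the rest of the proof of Proposition~\ref{prop:comparison} goes through unchanged.

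To produce the morphism globally over $V$, for each $\theta\in V$ I would choose an affine open $X'_\theta\subset X'$ containing $\varphi(\Supp T_\theta)$ and an affine open $X_\theta\subset \varphi^{-1}(X'_\theta)$ containing $\Supp T_\theta$, so that the restriction $\varphi|_{X_\theta}\colon X_\theta\to X'_\theta$ is \'etale and affine. Let $Q_\theta\subset V$ denote the open subscheme of quotients whose support is contained in $X_\theta$; it is an open neighborhood of $\theta$. The argument of Proposition~\ref{prop:comparison}, with the modification above, then produces an open neighborhood $U_\theta\subset Q_\theta$ of $\theta$ and an \'etale morphism
\[
\Phi_\theta\colon U_\theta\lra \Quot_{X'_\theta}(F'|_{X'_\theta},n)\hookrightarrow \Quot_{X'}(F',n)=Q',
\]
where the second arrow is the natural open immersion (extension by zero of quotients supported in $X'_\theta$).

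Finally I would glue. On the overlap $U_{\theta_1}\cap U_{\theta_2}$ the map $\Phi_{\theta_i}$ is the one classifying the flat family of quotients $F'_{U_{\theta_1}\cap U_{\theta_2}}\surj \widetilde\varphi_\ast\mathscr T|_{X'\times (U_{\theta_1}\cap U_{\theta_2})}$: by construction, the supports of the restricted universal quotient $\mathscr T$ lie in $X_{\theta_1}\cap X_{\theta_2}$, so the pushforward is canonically the same for $i=1,2$. By the universal property of the Quot scheme $Q'$ these two morphisms coincide, so the $\Phi_\theta$ glue to a well-defined \'etale morphism $\Phi\colon V\to Q'$.

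The main obstacle is the closedness of $\pi(\Supp\mathscr K)$ after dropping properness of $X'$; once one notices that families of zero-dimensional quotients always have finite, hence proper, support over $Q$, both the local construction and the gluing become essentially formal.
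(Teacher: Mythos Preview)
Your proof is correct but takes a genuinely different route from the paper. The paper's argument is a one-line reduction: complete $X'$ to a proper scheme $Y$ via an open immersion $i\colon X'\hookrightarrow Y$, note that $(i\circ\varphi)^*(i_*F')=\varphi^* i^* i_* F'=\varphi^* F'=F$, and apply Proposition~\ref{prop:comparison} directly to the composite $i\circ\varphi\colon X\to Y$, which is \'etale and now has proper target. Since the image of any quotient under the resulting map is supported in the open part $X'\subset Y$, the \'etale map factors through the open subscheme $\Quot_{X'}(F',n)\subset\Quot_Y(i_*F',n)$.

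Your approach instead isolates the single place where properness of $X'$ is actually used in the proof of Proposition~\ref{prop:comparison} (closedness of $\pi(\Supp\mathscr K)$) and replaces it by the observation that $\Supp\mathscr T\to Q$ is finite for any flat family of zero-dimensional quotients, so the cancellation argument goes through without compactifying. This is more intrinsic and avoids invoking Nagata's theorem or any compactification; it also makes the gluing over $V$ explicit, which the paper's proof suppresses (strictly speaking Proposition~\ref{prop:comparison} only produces an \'etale map on a neighborhood of a single $\theta$, so the paper is implicitly varying $\theta$ and gluing as well). Your route buys self-containment and makes the dependence on the zero-dimensional hypothesis transparent; the paper's route buys brevity by reusing Proposition~\ref{prop:comparison} as a black box.
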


\begin{proof}
First complete $X'$ to a proper scheme $Y$. Let $i\colon X'\ra Y$ be the open immersion and note that $i^\ast i_\ast F' = F'$ canonically. Combining Proposition \ref{prop:comparison} and Lemma \ref{lemma:surj726} we get an \'etale map $\Phi\colon V\ra \Quot_{Y}(i_\ast F',n)$. But the support of any quotient sheaf $i_\ast \varphi_\ast T$ lies in the open part $X'\subset Y$, so $\Phi$ actually factors through $\Quot_{X'}(F',n) = Q'$.
\end{proof}


\section{Relative Quot schemes and PT pairs}\label{UniversalEmbedding}
Let $X$ be a Calabi--Yau $3$-fold and fix the Chern character $\alpha = (r,D,-\beta,-m)$ with $r \geq 1$.
Let $\omega\in H^2(X,\Z)$ be an ample class on $X$ satisfying the usual coprimality condition
\be\label{coprimeagain}
\gcd(r,D\cdot \omega^2) = 1.
\ee

In this section, we upgrade the `fibrewise' closed immersion
\[
\psi_{\F,n}\colon \Quot_X(\lExt^1(\F,\O_X),n)\into M_{\PT}(r,D,\ch_2(\F),\ch_3(\F)-n)
\]
of Proposition \ref{PTembedding}, where $\F$ is a DT and PT object of $\ch(\F) = \alpha$, to a \emph{universal} closed immersion $\psi_{\alpha,n}$ by allowing the sheaf $\F$ to vary in the moduli space of DT and PT objects.
The domain of $\psi_{\alpha,n}$ is a certain relative Quot scheme.
The morphism $\psi_{\alpha,n}$ is not an isomorphism; indeed, it even fails to be surjective on $\C$-valued points. However, the coproduct $\psi_{r,D,\beta} = \coprod_{n \in \Z} \psi_{\alpha,n}$ (not depending on $\ch_3$ anymore) is a geometric bijection onto
\be\label{eq:MPT_rDbeta}
M_{\PT}(r,D,-\beta) = \coprod_{n \in \Z} M_{\PT}(r,D,-\beta,-m-n).
\ee
This may be seen as a new stratification of $M_{\PT}(r,D,-\beta)$ by relative Quot schemes. 

We first show that there exists a universal sheaf on $X\times M_{\DT}(\alpha)$. 
According to \cite[\textsection~4.6]{modulisheaves}, quasi-universal families exist on every moduli space of stable sheaves on a smooth projective variety.
A universal family exists if the following numerical criterion is fulfilled.
\begin{theorem}[{\cite[Thm.~4.6.5]{modulisheaves}}]\label{thm:HLcriterion}
Let $Z$ be a smooth projective variety, let $\mathsf{c}$ be a class in the numerical Grothendieck group $N(Z)$, and let $\set{B_1,\ldots,B_\ell}$ be a collection of coherent sheaves.
If
\[
\gcd(\chi(\mathsf{c} \otimes B_1), \ldots, \chi(\mathsf{c} \otimes B_\ell)) = 1,
\]
then there exists a universal family on $M(\mathsf{c})^{s} \times Z$.
\end{theorem}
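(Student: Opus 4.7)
The plan is to leverage the always-available existence of a quasi-universal family on $M(\mathsf c)^s \times Z$ and then use the gcd hypothesis to remove the residual $\G_m$-gerbe twist by a determinantal construction involving the sheaves $B_j$.

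First, I would recall that on $M(\mathsf c)^s \times Z$ there always exists a quasi-universal family $E$: a coherent sheaf, flat over $M(\mathsf c)^s$, whose restriction to every closed fibre $\set{[F]} \times Z$ is isomorphic to $F^{\oplus \rho}$ for some uniform similitude $\rho \geq 1$. Equivalently, the moduli stack $\mathscr M(\mathsf c)^s$ is a $\G_m$-gerbe over the coarse space $M(\mathsf c)^s$ of class $\alpha \in \mathrm{Br}(M(\mathsf c)^s)$, with $\alpha$ annihilated by $\rho$; the quasi-universal $E$ is obtained by descending the genuinely universal twisted sheaf $\mathscr E$ on $\mathscr M(\mathsf c)^s \times Z$ along a locally free $\alpha$-twisted sheaf of rank $\rho$. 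A universal family on $M(\mathsf c)^s \times Z$ exists if and only if $\alpha = 0$, and the task is therefore to trivialise $\alpha$.

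Next, for each sheaf $B_j$ I would form the determinant line bundle
\[
L_j \defeq \det \mathbf R p_*\bigl(E \otimes^{\mathbf L} q^* B_j\bigr) \,\in\, \Pic(M(\mathsf c)^s),
\]
where $p \colon M(\mathsf c)^s \times Z \to M(\mathsf c)^s$ and $q \colon M(\mathsf c)^s \times Z \to Z$ are the projections. Base change together with Grothendieck--Riemann--Roch computes $\rk \mathbf R p_*(E \otimes^{\mathbf L} q^* B_j) = \rho \cdot \chi(\mathsf c \otimes B_j)$, so under the gauge transformation $E \leadsto E \otimes p^* N$ by a line bundle $N$ on $M(\mathsf c)^s$, the determinant transforms as $L_j \leadsto L_j \otimes N^{\otimes \rho \chi(\mathsf c \otimes B_j)}$. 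In gerbe-theoretic language, $L_j$ records precisely $\chi(\mathsf c \otimes B_j)$ copies of the obstruction class $\alpha$. Choosing $a_j \in \Z$ with $\sum_j a_j \chi(\mathsf c \otimes B_j) = 1$, the combination $L \defeq \bigotimes_j L_j^{\otimes a_j}$ records a \emph{single} copy of $\alpha$, i.e., is a $\rho$-th root trivialising the gerbe; twisting $\mathscr E$ by $p^* L^{-1}$ on $\mathscr M(\mathsf c)^s \times Z$ then descends to the required universal family on $M(\mathsf c)^s \times Z$.

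The principal obstacle is the careful cocycle and base-change bookkeeping that identifies $L$ as an actual trivialisation of $\alpha$: one must verify on the level of $\G_m$-weights that the determinant of a twisted complex of virtual rank $k$ carries weight $k$ along the gerbe, so that the integer combination with coefficients $a_j$ delivered by the gcd hypothesis yields weight exactly $1$ rather than a proper multiple thereof. Everything else (existence of $E$, flat base change for $\mathbf R p_*$, descent along a trivial gerbe) is essentially formal.
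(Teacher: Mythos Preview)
The paper does not prove this statement: it is quoted verbatim as \cite[Thm.~4.6.5]{modulisheaves} and immediately applied to deduce Corollary~\ref{cor98274}, so there is no in-paper proof to compare against. Your outline is essentially the standard Huybrechts--Lehn argument (recast in gerbe language): build determinant line bundles $\det\mathbf R p_*(E\otimes q^*B_j)$ from a quasi-universal family, observe that each carries weight $\chi(\mathsf c\otimes B_j)$ under the residual $\G_m$-action, and use a B\'ezout combination from the gcd hypothesis to produce a weight-one line bundle that untwists the obstruction. This is correct and matches the cited reference; the only cosmetic difference is that Huybrechts--Lehn work directly with the GIT presentation (the $\GL$-linearised universal quotient on the Quot scheme) rather than the Brauer-class formulation, but the determinantal trick is identical.
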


We obtain the following corollary.
\begin{corollary}\label{cor98274}
There exists a universal family $\mathscr{F}_{\DT}(\alpha)$ on $M_{\DT}(\alpha) \times X$.
\end{corollary}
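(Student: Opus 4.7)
The plan is to invoke Theorem~\ref{thm:HLcriterion} with $Z = X$ and numerical class $\mathsf{c} = \alpha$. Under the coprimality assumption~\eqref{coprimeagain}, slope semistability coincides with slope stability, so $M_{\DT}(\alpha)$ coincides with the stable locus $M(\alpha)^s$ appearing in the theorem. It therefore suffices to produce a pair of coherent sheaves $B_1, B_2$ on $X$ with $\gcd(\chi(\alpha \otimes B_1), \chi(\alpha \otimes B_2)) = 1$.

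For $B_1$ I would take the skyscraper sheaf $\O_p$ at any closed point $p \in X$; Hirzebruch-Riemann-Roch immediately gives $\chi(\alpha \otimes \O_p) = r$. For $B_2$, the idea is to engineer an Euler pairing congruent to a nonzero multiple of $D \cdot \omega^2$ modulo $r$. First, choose a positive integer $n$ with $\gcd(n,r) = 1$ such that $n\omega$ is very ample; this is possible since the set of $k$ with $k\omega$ very ample is cofinite in $\Z_{\geq 1}$ and hence contains infinitely many integers coprime to $r$. By Bertini, two general members $Y_1, Y_2 \in |n\omega|$ then meet transversally in a smooth curve $C \subset X$ of class $[C] = n^2 \omega^2 \in H^4(X,\Z)$. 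Taking $B_2 = \O_C$, a direct Hirzebruch-Riemann-Roch computation (using $\ch_j(\O_C) = 0$ for $j<2$ together with $c_1(X)=0$) yields
\[
\chi(\alpha \otimes \O_C) \,=\, D \cdot [C] \,+\, r \cdot \chi(\O_C) \,\equiv\, n^2 (D \cdot \omega^2) \pmod{r}.
\]
Since $\gcd(r,n) = 1$ and $\gcd(r, D \cdot \omega^2) = 1$ by hypothesis, we obtain $\gcd(\chi(\alpha \otimes B_1), \chi(\alpha \otimes B_2)) = 1$, and Theorem~\ref{thm:HLcriterion} delivers the universal family $\mathscr{F}_{\DT}(\alpha)$ on $M_{\DT}(\alpha) \times X$.

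The main technical point is the compatibility of the scaling factor $n$: it must simultaneously render $n\omega$ very ample, so that Bertini provides a smooth complete-intersection curve in class $n^2 \omega^2$, and remain coprime to $r$, so that the coprimality hypothesis on $\alpha$ survives the passage from $\omega^2$ to the concrete curve class $n^2 \omega^2$. Once this arithmetic bookkeeping is arranged, the argument reduces to an invocation of the Huybrechts-Lehn criterion and a routine Riemann-Roch calculation on $X$, and I do not anticipate further substantive obstacles.
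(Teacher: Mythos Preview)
Your argument is correct but differs from the paper's in the choice of test sheaves fed into Theorem~\ref{thm:HLcriterion}. Both proofs start with the skyscraper $\O_p$ to extract the rank $r$; to isolate $D\cdot\omega^2$ modulo $r$, you build a smooth complete-intersection curve $C$ in class $n^2\omega^2$ via Bertini and compute $\chi(\alpha\otimes\O_C)\equiv n^2(D\cdot\omega^2)\pmod r$, while the paper instead takes the three line-bundle powers $L^0,L^1,L^2$ (with $c_1(L)=\omega$) and observes that the second difference $P_\alpha(2)-2P_\alpha(1)+P_\alpha(0)$ equals $r\omega^3+D\cdot\omega^2$. The paper's route is slightly lighter in that it avoids Bertini and the need to arrange the auxiliary coprimality $\gcd(n,r)=1$; your route has the minor aesthetic advantage of needing only two test sheaves rather than four. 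Substantively the arithmetic is the same, and both reductions to the Huybrechts--Lehn criterion are unproblematic.
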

Note that $c_1 \colon \Pic(X) \to H^2(X,\Z)$ is an isomorphism since $H^i(X,\O_X) = 0$ for $i=1,2$. We let $L$ be a line bundle represented by $\omega\in H^2(X,\Z)$. Since $X$ is Calabi--Yau, its Todd class is
\[
\Td(X) = (1,0,c_2(X)/12,0).
\]

\begin{proof}
We apply the above criterion.
Take the sheaves $B_k = L^{\otimes k}$ for $k = 0,1,2$ and $B_3 = \O_x$, where $x\in X$ is a point.
Via the Hirzebruch--Riemann--Roch Theorem, and using the above expression for the Todd class of $X$, we compute
\begin{equation*}
    \chi(\alpha \otimes L^{\otimes k}) 
    =\frac{rk^3}{6}\omega^3 + \frac{k^2}{2}D\cdot \omega^2 + \frac{rk}{12}c_2(X)\cdot \omega - k \beta\cdot \omega + \frac{1}{12}D\cdot c_2(X) - m
\end{equation*}
and $\chi(\alpha \otimes \O_x) = r$.
Writing $P_{\alpha}(k) = \chi(\alpha \otimes L^{\otimes k})$, it is not hard to see that
\[
P_{\alpha}(2) - 2P_{\alpha}(1) + P_{\alpha}(0) = r \omega^3 + D \cdot \omega^2.
\]
By the coprimality assumption \eqref{coprimeagain}, we find
\begin{equation*}
\begin{split}
    \gcd(\chi(\alpha \otimes \O_x),P_{\alpha}(2),P_{\alpha}(1),P_{\alpha}(0))
    &= \gcd(r,P_{\alpha}(2)-2P_{\alpha}(1)+P_{\alpha}(0),P_{\alpha}(1),P_{\alpha}(0)) \\
    &= \gcd(r,r \omega^3 + D \cdot \omega^2,P_{\alpha}(1),P_{\alpha}(0)) \\
    &=1.
\end{split}
\end{equation*}
The result now follows from Theorem \ref{thm:HLcriterion}.
\end{proof}

Next, we describe the domain of the morphism $\psi_{\alpha,n}$, which is a relative Quot scheme.
Inside the $\alpha$-component of Lieblich's moduli stack $\mathscr M_X(\alpha)\subset \mathscr M_X$, consider the intersection
\[
\mathcal M(\alpha)=\mathcal M_{\DT}(\alpha)\cap \mathcal M_{\PT}(\alpha),
\]
and let $M(\alpha)$ be its coarse moduli space, consisting of $\mu_\omega$-stable sheaves of homological dimension at most one.
Let 
\[
\mathscr F_\alpha = \mathscr{F}_{\DT}(\alpha)\big|_{X\times M(\alpha)}
\]
be the restriction of the universal sheaf constructed in Corollary \ref{cor98274}; it is flat over $M(\alpha)$.
If $\iota_m\colon X\times\set{m}\into X\times M(\alpha)$ denotes the natural closed immersion, it follows that
\be\label{Vanish321}
\lExt^2\left(\iota_m^\ast\mathscr F_\alpha,\O_{X}\right) = 0
\ee
for every $m\in M(\alpha)$.
Then \cite[Thm.~1.10]{PicScheme} implies that $\lExt^2\left(\mathscr F_\alpha,\O_{X\times M(\alpha)}\right) = 0$ (using that $\mathscr F_\alpha$ is $M(\alpha)$-flat) and \cite[Thm.~1.9]{PicScheme} implies that in the previous degree the base change map to the fibre is an isomorphism
\be\label{BCiso}
\iota_m^\ast\lExt^1\left(\mathscr F_\alpha,\O_{X\times M(\alpha)}\right)\,\widetilde{\ra}\,\lExt^1\left(\iota_m^\ast\mathscr F_\alpha,\O_{X}\right).
\ee

We consider the relative Quot scheme
\[
Q(\alpha,n) \defeq \Quot_{X\times M(\alpha)}\left(\lExt^1\left(\mathscr F_\alpha,\O_{X\times M(\alpha)}\right),n\right).
\]
It is a projective $M(\alpha)$-scheme that represents the functor $(\Sch/M(\alpha))^{\mathrm{op}}\ra\Sets$ which sends a morphism $f\colon S\ra M(\alpha)$ to the set of equivalence classes of surjections
\[
\pi_f^\ast \lExt^1\left(\mathscr F_\alpha,\O_{X\times M(\alpha)}\right)\onto \mathscr Q.
\]
Here $\mathscr Q$ is an $S$-flat family of length $n$ sheaves, $\pi_f = \id_X \times f$ fits in the cartesian square
\[
\begin{tikzcd}
X \times S\MySymb{dr} \arrow{r}{\pi_f}\arrow[swap]{d}{\pr_2} & X\times M(\alpha) \arrow{d}{\pr_2} \\
S\arrow[swap]{r}{f} & M(\alpha)
\end{tikzcd}
\]
and two surjections are equivalent if they have the same kernel.

If $m = f(s)$, and $i_s\colon X\times\set{s}\into X\times S$ and $\iota_m\colon X\times\set{m}\into X\times M(\alpha)$ denote the natural closed immersions, then we have a canonical identification
\be\label{idFibers}
\begin{tikzcd}
X\times\set{s}\arrow[hook]{d}{i_s}\arrow{r}{\sim} & X\times \set{m}\arrow[hook]{d}{\iota_m} \\
X\times S\arrow{r}{\pi_f} & X\times M(\alpha)
\end{tikzcd}
\ee
which we rephrase as the condition
\be\label{composition2874}
\iota_m = \pi_f\circ i_s. 
\ee
\begin{remark}
Since $\mathscr{F}_{\alpha}\in \Coh(X \times M(\alpha))$ is flat over $M(\alpha)$, we have that ${\textbf L} \iota_m^*\mathscr{F}_{\alpha}$ is a coherent sheaf on $X$ for all $m \in M(\alpha)$.
In particular, ${\textbf L}^k \iota_m^*\mathscr{F}_{\alpha} = 0$ for all $k > 0$, and so ${\textbf L} \iota_m^*\mathscr{F}_{\alpha} = \iota_m^*\mathscr{F}_{\alpha}$.
Using \eqref{composition2874}, we obtain
\[
\iota_m^*\mathscr{F}_{\alpha} = {\textbf L}\iota_m^*\mathscr{F}_{\alpha} \cong {\textbf L}i_s^* \bigl({\textbf L}\pi_f^*\mathscr{F}_{\alpha}\bigr).
\]
By \cite[Lem.~4.3]{Bri2}, we conversely deduce that ${\textbf L}\pi_f^*\mathscr F_{\alpha}$ is a sheaf on $X \times S$ flat over $S$.
In particular, $\pi_f^*\mathscr F_{\alpha} = {\textbf L} \pi_f^*\mathscr F_{\alpha}$ since $\mathscr F_{\alpha}$ is a sheaf and hence ${\textbf L} \pi_f^*\mathscr F_{\alpha} \in D^{\leq 0}(X \times S)$.
\end{remark}

\begin{remark}\label{rem:family_version_constant_family}
If the morphism $f \colon S \to M(\alpha)$ is constant, corresponding to a single sheaf $\F$ parametrised by $S$ in a constant family, then $\pi_f$ factors through the flat projection
\[
X \times S \to X \times \{\mathcal{F}\} \subset X \times M(\alpha).
\]
We are reduced to the situation of Proposition~\ref{PTembedding} and $Q(\alpha,n)(f) = \Quot_X(\lExt^1_X(\F,\O_X),n)(S)$. 
\end{remark}

We now prove the following generalisation of Proposition \ref{PTembedding}.
\begin{prop}\label{prop:UnivPTimmersion}
For every $n\geq 0$ there is a closed immersion
\[
\psi_{\alpha,n}\colon Q(\alpha,n)\into M_{\PT}(r,D,-\beta,-m-n).
\]
\end{prop}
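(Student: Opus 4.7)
The plan is to mirror the proof of Proposition \ref{PTembedding} in the relative setting, treating the universal sheaf $\mathscr F_\alpha$ of Corollary \ref{cor98274} as the relative analogue of a single sheaf $\F$. All the key technical inputs transfer: $\pi_f^\ast \mathscr F_\alpha$ is a perfect complex (and hence so is its derived dual); derived dualization commutes with derived pullback for perfect complexes; Lemma \ref{TopExt} controls the derived dual of the zero-dimensional family $\mathscr Q$; and the base change isomorphism \eqref{BCiso} allows one to work fibrewise over $S$.

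Given an $S$-point $(f,t)$ of $Q(\alpha,n)$, I would pull the canonical triangle
\[
H^0(\mathscr F_\alpha^{\vee}) \to \mathscr F_\alpha^{\vee} \to \lExt^1(\mathscr F_\alpha,\O_{X\times M(\alpha)})[-1]
\]
back along $\pi_f$, precompose $t[-1]$ to obtain $\bar t \colon (\pi_f^\ast \mathscr F_\alpha)^{\vee} \to \mathscr Q[-1]$, and then derived-dualize to land in the relative analogue of \eqref{gnagna311},
\[
\Hom\bigl((\pi_f^\ast \mathscr F_\alpha)^{\vee},\mathscr Q[-1]\bigr) \cong \Ext^1\bigl(\mathscr Q^{\mathrm D}[-1],\pi_f^\ast \mathscr F_\alpha\bigr),
\]
producing an extension $\pi_f^\ast \mathscr F_\alpha \to J^\bullet \to \mathscr Q^{\mathrm D}[-1]$ in $\Perf(X\times S)$. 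Since derived dualization commutes with derived pullback of perfect complexes, taking the derived fiber at $s\in S$ reduces the check that $J^\bullet$ is a family of PT pairs to the fibrewise argument of Proposition \ref{PTembedding}: the surjectivity of each $t_s$ forces $\lExt^2(J^\bullet_s,\O_X)=0$, so Lemma \ref{lem:PrePT_is_PT} applies. Composition with $\mathcal M_{\PT} \to M_{\PT}$ defines $\psi_{\alpha,n}$. Properness is automatic: $Q(\alpha,n)$ is projective over the finite-type base $M(\alpha)$ and hence proper, and $M_{\PT}$ is separated.

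The main obstacle is showing that $\psi_{\alpha,n}$ is a monomorphism, which is genuinely more delicate than in Proposition \ref{PTembedding} because now the underlying sheaf $H^0(J^\bullet) \in M(\alpha)$ is allowed to vary with the source. My plan is a two-step argument. Suppose that $(f_1,t_1)$ and $(f_2,t_2)$ have the same image in $M_{\PT}$. Taking $H^0$ of the two associated PT pair families yields two $S$-families of $\mu_\omega$-stable DT-and-PT sheaves that are fibrewise isomorphic, so by the universal property of the coarse space $M(\alpha)$, together with the rigidity afforded by the universal sheaf $\mathscr F_\alpha$, one obtains $f_1 = f_2$. With the common source $\pi_f^\ast \lExt^1(\mathscr F_\alpha,\O_{X\times M(\alpha)})$ identified, the final injectivity step at the end of the proof of Proposition \ref{PTembedding} applies verbatim: precomposition with the analogue of the morphism $g$ remains injective because $\Hom(\pi_f^\ast H^0(\mathscr F_\alpha^{\vee})[1],\mathscr Q[-1])$ still vanishes for degree reasons, yielding $t_1 = t_2$. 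Since a proper monomorphism is a closed immersion, this completes the argument.
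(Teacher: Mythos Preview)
Your construction of $\psi_{\alpha,n}$ and the fibrewise verification that $J^\bullet$ is a family of PT pairs follow the paper's argument essentially verbatim. Your two-step monomorphism argument (first recover $f$ from $H^0(J^\bullet)$, then recover $t$ by the injectivity of precomposition with $g$) is a more explicit rendering of what the paper compresses into ``the same argument used in Proposition~\ref{PTembedding}''.

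There is, however, a genuine gap in your properness step. You assert that $Q(\alpha,n)$ is proper because it is projective over the finite-type base $M(\alpha)$. But $M(\alpha)$ is only an \emph{open}, generally non-closed, subscheme of the projective scheme $M_{\DT}(\alpha)$: it is cut out by the open condition $\lExt^2(\F,\O_X)=0$. For example, in rank one with $\alpha=(1,0,-\beta,-m)$ the locus $M(\alpha)\subset \Hilb(X)$ is the Cohen--Macaulay locus, which fails to be proper whenever the Hilbert scheme contains subschemes with embedded points. Thus $Q(\alpha,n)$ need not be proper over $\Spec\C$, and the ``proper source, separated target'' shortcut is unavailable here.

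The paper handles this differently: it appeals to the valuative criterion for properness of the morphism $\psi_{\alpha,n}$ itself, using that the restriction of $\psi_{\alpha,n}$ to each fibre $\tau^{-1}(\F)$ of the structure map $\tau\colon Q(\alpha,n)\to M(\alpha)$ coincides with the already-established closed immersion $\psi_{\F,n}$ of Proposition~\ref{PTembedding}. You should replace your properness argument with this one.
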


\begin{proof}
Fix a morphism $f\colon S\ra M(\alpha)$ and a quotient 
\[
q\colon \pi_f^\ast \lExt^1\left(\mathscr F_\alpha,\O_{X\times M(\alpha)}\right)\onto \mathscr Q.
\]
Note that the restriction of $q$ to the slice $X\times \set{s}$ is canonically identified with a surjection
\be\label{map:qs}
q_s\colon \lExt^1(\iota_m^\ast\mathscr F_\alpha,\O_X)\onto \mathscr Q_s
\ee
via the identification \eqref{idFibers} and the base change isomorphism \eqref{BCiso}.

We claim that there exists a canonical morphism
\[
u \colon \left(\pi_f^*\mathscr{F}_{\alpha}\right)^{\vee} \to \pi_f^*\lExt^1\left(\mathscr{F}_{\alpha},\O_{X \times M(\alpha)}\right)[-1].
\]
Indeed, it is the composition of the canonical morphisms
\[
\left(\pi_f^*\mathscr{F}_{\alpha}\right)^{\vee} = \bigl({\textbf L} \pi_f^*\mathscr{F}_{\alpha}\bigr)^{\vee} \cong {\textbf L} \pi_f^* \mathscr{F}_{\alpha}^{\vee} \to \pi_f^*\lExt^1\left(\mathscr F_{\alpha},\O_{X \times M(\alpha)}\right)[-1].
\]
The second isomorphism follows because the derived pullback commutes with the derived dualising functor.
The last morphism exists by the following argument.

The vanishing of \eqref{Vanish321} implies $\mathscr{F}_{\alpha} \in D^{[0,1]}(X \times M(\alpha))$.
There is a canonical triangle
\[
{\textbf L} \pi_f^*H^0(\mathscr{F}_{\alpha}^{\vee}) \to {\textbf L} \pi_f^*\mathscr{F}_{\alpha}^{\vee} \to {\textbf L} \pi_f^*\lExt^1\left(\mathscr{F}_{\alpha},\O_{X \times M(\alpha)}\right)[-1]
\]
since ${\textbf L} \pi_f^*$ is an exact functor.
Taking cohomology yields the isomorphism
\[
H^1\bigl({\textbf L} \pi_f^*\mathscr{F}_{\alpha}^{\vee}\bigr) \cong \pi_f^*\lExt^1\left(\mathscr{F}_{\alpha},\O_{X \times M(\alpha)}\right)
\]
since ${\textbf L} \pi_f^*$ is a left derived functor and so ${\textbf L}\pi_f^*\F_{\alpha} \in D^{\leq 1}(X \times S)$.
This completes the argument.

We now define $\psi_{\alpha,n}$.
As before, we shift the morphism $q$ and precompose by $u$ to obtain
\[
\bar{q} = q[-1] \circ u \colon \left(\pi_f^* \mathscr{F}_{\alpha}\right)^{\vee} \to \mathscr{Q}[-1].
\]
Taking cohomology of the canonical isomorphism $\mathbf R\lHom(E,F) \cong \mathbf R\lHom(F^{\vee},E^{\vee})$ yields
\[
\Hom_{X \times S}\bigl((\pi_f^* \mathscr{F}_{\alpha})^{\vee}, \mathscr{Q}[-1]\bigr) \cong \Ext^1_{X \times S}\bigl(\mathscr{Q}^{\mathrm D}[-1],\pi_f^* \mathscr{F}_{\alpha}\bigr),
\]
where we have appealed to Lemma \ref{TopExt}, just as in the proof of Proposition \ref{PTembedding}.
We identify $\bar{q}$ with the corresponding extension under this identification, to obtain a triangle
\begin{equation}\label{JSfamily}
\pi_f^*\mathscr{F}_{\alpha} \to J^{\bullet} \to \mathscr{Q}^{\mathrm D}[-1]
\end{equation}
of perfect complexes on $X \times S$.
We claim that $J^{\bullet}$ defines an $S$-family of PT pairs.

Clearly, the derived fibre $J^{\bullet}_s = {\textbf L} i_s^*J^{\bullet}$ of each closed point $s \in S$ defines a pre-PT pair based at the sheaf corresponding to $m = f(s)\in M(\alpha)$.
To see that $J^{\bullet}_s$ is a PT pair, recall that for a perfect complex the operations of taking the derived fibre and taking the derived dual commute.
By taking the derived fibre of the triangle \eqref{JSfamily}, we obtain the triangle
\[
{\textbf L}i_s^* \pi_f^*\mathscr{F}_{\alpha} \to J^{\bullet}_s \to \mathscr{Q}^{\mathrm D}_s[-1]
\]
in $\Perf(X)$.
Using the fact that ${\textbf L}i_s^*\pi_f^*\mathscr F_{\alpha} = \iota_m^*\mathscr F_{\alpha}$ and applying the derived dualising functor ${\textbf R}\lHom_X(-,\O_X)$, we obtain the triangle
\[
\mathscr{Q}_s[-2] \to J_s^{\bullet \vee} \to \bigl(\iota_m^*\mathscr{F}_{\alpha}\bigr)^{\vee}.
\]
Further taking cohomology yields the exact sequence
\[
\cdots \to \lExt^1_{X_s}\bigl(\iota_m^*\mathscr{F}_{\alpha},\O_{X_s}\bigr) \xrightarrow{q_s} \mathscr{Q}_s \to \lExt^2_{X_s}(J^{\bullet}_s,\O_{X_s}) \to 0
\]
where the last $0$ is $\lExt^2(\iota_m^\ast\mathscr F_\alpha,\O_{X})$ as in \eqref{Vanish321} and $q_s$ is precisely the map \eqref{map:qs} obtained by restricting the original surjection $q$. As such, it is surjective, therefore $\lExt^2(J^{\bullet}_s,\O_{X_s}) = 0$, proving that $J^\bullet$ defines a family of PT pairs by Lemma~\ref{lem:PrePT_is_PT}. 
We conclude that 
\[
\psi_{\alpha,n}\colon q\mapsto J^\bullet
\]
defines a morphism. The properness of $\psi_{\alpha,n}$ follows from the valuative criterion, along with the fact that $\psi_{\alpha,n}$ restricted to a fibre $\tau^{-1}(\F)$ of the structure morphism $\tau\colon Q(\alpha,n)\ra M(\alpha)$ is precisely the closed immersion $\psi_{\F,n}$ of Proposition \ref{PTembedding}; see Remark~\ref{rem:family_version_constant_family}.
Finally, the same argument used in Proposition \ref{PTembedding} shows injectivity on all valued points, thus proving that $\psi_{\alpha,n}$ is a closed immersion as claimed.
\end{proof}

Set $Q(r,D,-\beta) = \coprod_{n \in \Z} Q(\alpha,n)$ and recall the morphism $\psi_{r,D,\beta}$ from equation~\eqref{eq:MPT_rDbeta}.
\begin{corollary}
The morphism $\psi_{r,D,\beta} \colon Q(r,D,-\beta) \to M_{\PT}(r,D,-\beta)$ is a geometric bijection.
\end{corollary}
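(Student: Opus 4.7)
The plan is to reduce the statement to bijectivity of $\psi_{r,D,\beta}$ on $\C$-valued points, since source and target are algebraic spaces and the morphism is automatically representable. Injectivity will follow from Proposition~\ref{prop:UnivPTimmersion}: each component $\psi_{\alpha,n}$ is a closed immersion, and its image lies in the summand $M_{\PT}(r,D,-\beta,-m-n)$, distinguished by $\ch_3$, so the images are disjoint as $n$ varies.

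For surjectivity, given a $\C$-valued point $J^{\bullet}$ of $M_{\PT}(r,D,-\beta)$, I would set $\F = H^0(J^\bullet)$, $P = H^1(J^\bullet)$, and exploit the defining triangle $\F \to J^\bullet \to P[-1]$. By Definition~\ref{ptpair} the sheaf $\F$ is $\mu_\omega$-stable and torsion free while $P$ is zero-dimensional of some length $n$. Applying ${\textbf R}\lHom(-,\O_X)$ yields the triangle
\[
P^{\mathrm D}[-2] \to J^{\bullet \vee} \to \F^\vee,
\]
and the associated long exact sequence of cohomology sheaves, combined with the vanishing $\lExt^2(J^\bullet,\O_X)=0$ from Lemma~\ref{lem:PrePT_is_PT}, yields both $\lExt^2(\F,\O_X)=0$ and a surjection $t\colon \lExt^1(\F,\O_X) \onto P^{\mathrm D}$. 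Together with the vanishing $\lExt^3(\F,\O_X)=0$, which holds for any torsion-free sheaf on a smooth projective $3$-fold (for instance by comparing $\F$ with its reflexive hull $\F^{**}$, which has homological dimension at most one), this shows that $\F$ has homological dimension at most one. Hence $\F$ represents a $\C$-point of $M(\alpha)$, and the pair $(\F,t)$ a $\C$-point of $Q(\alpha,n)$.

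The remaining step is to verify $\psi_{\alpha,n}(\F,t)=J^\bullet$, and I expect this to be the main point. The argument uses the canonical identification
\[
\Hom(\F^\vee, P[-1]) \cong \Ext^1(P^{\mathrm D}[-1], \F)
\]
obtained by applying the derived dualising functor twice. Under this identification, the extension class of the original triangle $\F \to J^\bullet \to P^{\mathrm D}[-1]$ corresponds to the composition $\F^\vee \to \lExt^1(\F,\O_X)[-1] \xrightarrow{t[-1]} P^{\mathrm D}[-1]$, which is exactly the input data for the construction of $\psi_{\alpha,n}$ in Proposition~\ref{PTembedding}. This is the precise converse construction indicated in the remark following that proposition, so $\psi_{\alpha,n}(\F,t) = J^\bullet$, completing the bijection on $\C$-points. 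Therefore $\psi_{r,D,\beta}$ is a geometric bijection.
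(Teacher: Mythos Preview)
Your approach is essentially the paper's: extract $\F = H^0(J^\bullet)$ and $P = H^1(J^\bullet)$, dualise the canonical triangle, and read off a surjection from $\lExt^1(\F,\O_X)$ as a point of the relative Quot scheme. You are in fact more thorough than the paper's three-line sketch, explicitly treating injectivity, verifying that $\F$ has homological dimension at most one (the paper leaves this implicit), and checking that $\psi_{\alpha,n}$ returns $J^\bullet$.

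One small point the paper does address and you gloss over: the paper writes ``$\ch_3(\F) = -m$'' and ``set $\alpha = (r,D,-\beta,-m)$'', i.e.\ it \emph{redefines} $m$ and $\alpha$ in terms of the given $J^\bullet$, since $\ch_3(H^0(J^\bullet))$ need not agree with the value of $m$ fixed at the beginning of the appendix. Your sentence ``hence $\F$ represents a $\C$-point of $M(\alpha)$'' needs this same reindexing to be literally correct.
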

This may be seen as a new stratification of $M_{\PT}(r,D,-\beta)$ by relative Quot schemes.
\begin{proof}
Let $\F \to J^{\bullet} \to Q^{\mathrm D}[-1]$ be a $\C$-valued point of $M_{\PT}(r,D,-\beta)$.
Write $\ch_3(\F) = -m$ and $\ell(Q^{\mathrm D}) = n$, so that $\ch_3(J^{\bullet}) = -m-n$.
Set $\alpha = (r,D,-\beta,-m)$ and let $f \colon \Spec(\C) \to M(\alpha)$ be the morphism corresponding to $\F$.
Applying $\lHom(-,\O_X)$ to the triangle of $J^{\bullet}$ yields a surjection $q \colon \lExt^1_X(\F,\O_X) \onto Q$, which is a $\C$-valued point of $Q(\alpha,n)$.
\end{proof}

\bibliographystyle{amsplain}
\bibliography{bib}

\providecommand{\bysame}{\leavevmode\hbox to3em{\hrulefill}\thinspace}
\providecommand{\MR}{\relax\ifhmode\unskip\space\fi MR }
\providecommand{\MRhref}[2]{%
  \href{http://www.ams.org/mathscinet-getitem?mr=#1}{#2}
}
\providecommand{\href}[2]{#2}
\begin{thebibliography}{10}

\bibitem{PicScheme}
Allen~B. Altman and Steven~L. Kleiman, \emph{Compactifying the {P}icard
  scheme}, Adv. in Math. \textbf{35} (1980), no.~1, 50--112.

\bibitem{MR2998828}
Daniele Arcara and Aaron Bertram, \emph{Bridgeland-stable moduli spaces for
  {$K$}-trivial surfaces}, J. Eur. Math. Soc. (JEMS) \textbf{15} (2013), no.~1,
  1--38, With an appendix by Max Lieblich.

\bibitem{Bayer1}
Arend {Bayer}, \emph{{Polynomial Bridgeland stability conditions and the large
  volume limit}}, {Geom. Topol.} \textbf{13} (2009), no.~4, 2389--2425.

\bibitem{SjoerdThesis}
Sjoerd~Viktor Beentjes, \emph{{The crepant resolution conjecture for
  Donaldson--Thomas invariants via wall-crossing}}, Ph.D. thesis, University of
  Edinburgh, 2018.

\bibitem{BCR}
Sjoerd~Viktor {Beentjes}, John {Calabrese}, and J{\o}rgen {Vold Rennemo},
  \emph{{A proof of the Donaldson-Thomas crepant resolution conjecture}},
  {\url{https://arxiv.org/abs/1810.06581}}, 2018.

\bibitem{Beh}
Kai Behrend, \emph{{Donaldson--Thomas type invariants via microlocal
  geometry}}, Ann. of Math. \textbf{2} (2009), no.~170, 1307--1338.

\bibitem{BBS}
Kai Behrend, Jim Bryan, and Bal{{\'a}}zs Szendr{\H{o}}i, \emph{Motivic degree
  zero {D}onaldson--{T}homas invariants}, Invent. Math. \textbf{192} (2013),
  no.~1, 111--160.

\bibitem{BFHilb}
Kai Behrend and Barbara Fantechi, \emph{{\itshape{Symmetric obstruction
  theories and Hilbert schemes of points on threefolds}}}, Algebra Number
  Theory \textbf{2} (2008), 313--345.

\bibitem{Bifet}
Emili Bifet, \emph{{Sur les points fixes du sch\'{e}ma
  {${\textrm{Quot}}_{{\mathscr O}^r_X/X/k}$} sous l'action du tore {${\mathbf
  G}^r_{m,k}$}}}, C. R. Acad. Sci. Paris S\'{e}r. I Math. \textbf{309} (1989),
  no.~9, 609--612.

\bibitem{Bri2}
Tom Bridgeland, \emph{Equivalences of triangulated categories and
  {F}ourier--{M}ukai transforms}, Bull. London Math. Soc. \textbf{31} (1999),
  no.~1, 25--34.

\bibitem{Bri}
\bysame, \emph{Hall algebras and curve counting invariants}, J. Amer. Math.
  Soc. \textbf{24} (2011), no.~4, 969--998.

\bibitem{Bri-Hall}
\bysame, \emph{{An introduction to motivic Hall algebras}}, {Adv. Math.}
  \textbf{229} (2012), no.~1, 102--138.

\bibitem{Cazzaniga_Thesis}
Alberto Cazzaniga, \emph{{On some computations of refined Donaldson--Thomas
  invariants}}, PhD Thesis, University of Oxford, 2015.

\bibitem{DenefLoeser1}
Jan Denef and Fran{\c c}ois Loeser, \emph{{Geometry on arc spaces of algebraic
  varieties}}, {3rd European congress of mathematics (ECM), Barcelona, Spain,
  July 10--14, 2000. Volume I}, Basel: Birkh\"auser, 2001, pp.~327--348.

\bibitem{Gholampour2017}
Amin Gholampour and Martijn Kool, \emph{{Higher rank sheaves on threefolds and
  functional equations}}, {\'E}piga \textbf{3} (2019), no.~17.

\bibitem{MR1327209}
Dieter Happel, Idun Reiten, and Sverre~O. Smal\o, \emph{Tilting in abelian
  categories and quasitilted algebras}, Mem. Amer. Math. Soc. \textbf{120}
  (1996), no.~575, viii+ 88.

\bibitem{modulisheaves}
Daniel Huybrechts and Manfred Lehn, \emph{The geometry of moduli spaces of
  sheaves}, second ed., Cambridge Mathematical Library, Cambridge University
  Press, Cambridge, 2010.

\bibitem{HT}
Daniel Huybrechts and Richard~P. Thomas, \emph{Deformation-obstruction theory
  for complexes via {A}tiyah and {K}odaira-{S}pencer classes}, Math. Ann.
  \textbf{346} (2010), no.~3, 545--569.

\bibitem{King}
Alastair King, \emph{Moduli of representations of finite-dimensional algebras},
  Quart. J. Math. Oxford Ser. (2) \textbf{45} (1994), no.~180, 515--530.

\bibitem{kreschcycle}
Andrew Kresch, \emph{Cycle groups for {A}rtin stacks}, Invent. Math.
  \textbf{138} (1999), no.~3, 495--536.

\bibitem{NCHilb1}
Michael Larsen and Valery~A. Lunts, \emph{Hilbert schemes of points for
  associative algebras}, {\url{arXiv:1205.2767v2}}, 2012.

\bibitem{LEPA}
Marc Levine and Rahul Pandharipande, \emph{Algebraic cobordism revisited},
  Invent. Math. \textbf{176} (2009), no.~1, 63--130.

\bibitem{JLI}
Jun Li, \emph{Zero dimensional {D}onaldson--{T}homas invariants of threefolds},
  Geom. Topol. \textbf{10} (2006), 2117--2171.

\bibitem{Lieblich1}
Max Lieblich, \emph{Moduli of complexes on a proper morphism}, J. Algebraic
  Geom. \textbf{15} (2006), no.~1, 175--206.

\bibitem{Lo}
Jason Lo, \emph{Polynomial {B}ridgeland stable objects and reflexive sheaves},
  Math. Res. Lett. \textbf{19} (2012), no.~4, 873--885.

\bibitem{Lo2018}
\bysame, \emph{{A relation between higher-rank PT stable objects and quotients
  of coherent sheaves}}, {\url{}https://arxiv.org/abs/1810.01380}, October
  2018.

\bibitem{MNOP1}
Davesh Maulik, Nikita Nekrasov, Andrei Okounkov, and Rahul Pandharipande,
  \emph{{Gromov--Witten theory and Donaldson--Thomas theory I}}, Compos. Math.
  \textbf{142} (2006), 1263--1285.

\bibitem{Ob1}
Georg Oberdieck, \emph{On reduced stable pair invariants}, Math. Z.
  \textbf{289} (2018), no.~1-2, 323--353.

\bibitem{PT}
Rahul Pandharipande and Richard~P. Thomas, \emph{Curve counting via stable
  pairs in the derived category}, Invent. Math. \textbf{178} (2009), no.~2,
  407--447.

\bibitem{BPS}
\bysame, \emph{Stable pairs and {BPS} invariants}, J. Amer. Math. Soc.
  \textbf{23} (2010), no.~1, 267--297.

\bibitem{MR3905133}
Dulip Piyaratne and Yukinobu Toda, \emph{Moduli of {B}ridgeland semistable
  objects on 3-folds and {D}onaldson-{T}homas invariants}, J. Reine Angew.
  Math. \textbf{747} (2019), 175--219.

\bibitem{ThesisR}
Andrea~T. Ricolfi, \emph{{Local Donaldson--Thomas invariants and their
  refinements}}, Ph.D. thesis, University of Stavanger, 2017.

\bibitem{Ricolfi2018}
\bysame, \emph{The {DT}/{PT} correspondence for smooth curves}, Math. Z.
  \textbf{290} (2018), no.~1-2, 699--710.

\bibitem{LocalDT}
\bysame, \emph{{Local contributions to Donaldson--Thomas invariants}},
  International Mathematics Research Notices \textbf{2018} (2018), no.~19,
  5995--6025.

\bibitem{Rydh1}
David Rydh, \emph{{Families of cycles and the Chow scheme}}, Ph.D. thesis, KTH,
  Stockholm, 2008.

\bibitem{Seg}
Ed~Segal, \emph{{The $A_\infty$ Deformation Theory of a Point and the Derived
  Categories of Local Calabi-Yaus}}, J. Algebra \textbf{320} (2008), no.~8,
  3232--3268.

\bibitem{ST}
Jacopo Stoppa and Richard~P. Thomas, \emph{Hilbert schemes and stable pairs:
  \uppercase{git} and derived category wall crossings}, Bulletin de la
  Soci\'et\'e Math\'ematique de France \textbf{139} (2011), no.~3, 297--339.

\bibitem{MR2403807}
Bal{{\'a}}zs Szendr{\H{o}}i, \emph{Non-commutative {D}onaldson-{T}homas
  invariants and the conifold}, Geom. Topol. \textbf{12} (2008), no.~2,
  1171--1202.

\bibitem{ThomasThesis}
Richard~P. Thomas, \emph{A holomorphic {C}asson invariant for {C}alabi--{Y}au
  3-folds, and bundles on {K}3 fibrations}, J. Diff. Geo. \textbf{54} (2000),
  no.~2, 367--438.

\bibitem{Toda1}
Yukinobu {Toda}, \emph{{Curve counting theories via stable objects. I: DT/PT
  correspondence}}, {J. Am. Math. Soc.} \textbf{23} (2010), no.~4, 1119--1157.

\bibitem{Toda0}
Yukinobu Toda, \emph{Generating functions of stable pair invariants via
  wall-crossings in derived categories}, New developments in algebraic
  geometry, integrable systems and mirror symmetry ({RIMS}, {K}yoto, 2008),
  Adv. Stud. Pure Math., vol.~59, Math. Soc. Japan, Tokyo, 2010, pp.~389--434.

\bibitem{Toda2}
\bysame, \emph{Hall algebras in the derived category and higher rank {DT}
  invariants}, {\url{https://arxiv.org/abs/1601.07519}}, 2016, preprint.

\end{thebibliography}

\end{document}